\title[Ricci Solitons from Hopf fibrations]{Cohomogeneity one Ricci Solitons from Hopf fibrations}
\author{Matthias Wink}
\address{Department of Mathematics, UCLA, 520 Portola Plaza, Los Angeles, CA, 90095}
\email{wink@math.ucla.edu}
\thanks{This work was supported by an EPSRC Research Studentship, the Aarhus-Oxford QGM collaboration and the German National Academic Foundation}
\keywords{Ricci Solitons, Einstein metrics, cohomogeneity one}
\subjclass[2010]{53C25, 53C44 (53C30)}
\begin{document}

\newcommand{\diam} {\operatorname{diam}}
\newcommand{\Scal} {\operatorname{Scal}}
\newcommand{\scal} {\operatorname{scal}}
\newcommand{\Ric} {\operatorname{Ric}}
\newcommand{\Hess} {\operatorname{Hess}}
\newcommand{\grad} {\operatorname{grad}}
\newcommand{\Rm} {\operatorname{Rm}}
\newcommand{\Rc} {\operatorname{Rc}}
\newcommand{\Curv} {S_{B}^{2}\left( \mathfrak{so}(n) \right) }
\newcommand{ \tr } {\operatorname{tr}}
\newcommand{ \id } {\operatorname{id}}
\newcommand{ \Riczero } {\stackrel{\circ}{\Ric}}
\newcommand{ \ad } {\operatorname{ad}}
\newcommand{ \Ad } {\operatorname{Ad}}
\newcommand{ \dist } {\operatorname{dist}}
\newcommand{ \rank } {\operatorname{rank}}
\newcommand{\Vol}{\operatorname{Vol}}
\newcommand{\dVol}{\operatorname{dVol}}
\newcommand{ \zitieren }[1]{ \hspace{-3mm} \cite{#1}}
\newcommand{ \pr }{\operatorname{pr}}
\newcommand{\diag}{\operatorname{diag}}
\newcommand{\Lagr}{\mathcal{L}}
\newcommand{\av}{\operatorname{av}}

\newtheorem{theorem}{Theorem}[section]
\newtheorem{definition}[theorem]{Definition}
\newtheorem{example}[theorem]{Example}
\newtheorem{remark}[theorem]{Remark}
\newtheorem{lemma}[theorem]{Lemma}
\newtheorem{proposition}[theorem]{Proposition}
\newtheorem{corollary}[theorem]{Corollary}
\newtheorem{assumption}[theorem]{Assumption}
\newtheorem{acknowledgment}[theorem]{Acknowledgment}
\newtheorem{DefAndLemma}[theorem]{Definition and lemma}
\newtheorem{questionroman}[theorem]{Question}

\newenvironment{remarkroman}{\begin{remark} \normalfont }{\end{remark}}
\newenvironment{exampleroman}{\begin{example} \normalfont }{\end{example}}
\newenvironment{question}{\begin{questionroman} \normalfont }{\end{questionroman}}

\renewcommand{\labelenumi}{(\alph{enumi})}
\newtheorem{maintheorem}{Theorem}[]
\renewcommand*{\themaintheorem}{\Alph{maintheorem}}
\newtheorem*{theorem*}{Theorem}
\newtheorem*{corollary*}{Corollary}
\newtheorem*{remark*}{Remark}
\newtheorem*{example*}{Example}
\newtheorem*{question*}{Question}

\newcommand{\R}{\mathbb{R}}
\newcommand{\N}{\mathbb{N}}
\newcommand{\Z}{\mathbb{Z}}
\newcommand{\Q}{\mathbb{Q}}
\newcommand{\C}{\mathbb{C}}
\newcommand{\F}{\mathbb{F}}
\newcommand{\X}{\mathcal{X}}
\newcommand{\D}{\mathcal{D}}
\newcommand{\Cont}{\mathcal{C}}

\begin{abstract}
This paper studies cohomogeneity one Ricci solitons. If the isotropy representation of the principal orbit $G/K$ consists of two inequivalent $\Ad_K$-invariant irreducible summands, the existence of continuous families of non-homothetic complete steady and expanding Ricci solitons on non-trivial bundles is shown. These examples were detected numerically by Buzano-Dancer-Gallaugher-Wang. The analysis of the corresponding Ricci flat trajectories is used to reconstruct Einstein metrics of positive scalar curvature due to B\"ohm. The techniques also apply to $m$-quasi-Einstein metrics. 
\end{abstract}

\maketitle

\section*{Introduction}

A Riemannian manifold $(M,g)$ is called {\em Ricci soliton} if there exists a smooth vector field $X$ on $M$ and a real number  $\varepsilon \in \R$ such that 
\begin{equation*}
\Ric + \frac{1}{2} L_X g + \frac{\varepsilon}{2} g = 0,
\end{equation*}
where $L_X g$ denotes the Lie derivative of the metric $g$ with respect to $X.$ Ricci solitons are generalisations of Einstein manifolds and will be called {\em non-trivial} if $X$ is not a Killing vector field. If $X$ is the gradient of a smooth function $u \colon M \to \R$ then it is called a {\em gradient} Ricci soliton. It is called {\em shrinking}, {\em steady} or {\em expanding} depending on whether $\varepsilon<0,$ $\varepsilon=0$ or $\varepsilon>0.$ Ricci solitons were introduced by Hamilton \cite{HamiltonRFonSurfaces} as self-similar solutions to the Ricci flow and play an important role in its singularity analysis.

\vspace{2mm}

This paper studies the Ricci soliton equation under the assumption of a large symmetry group. For example, Lauret \cite{LauretHomogeneousRS} has constructed non-gradient, homogeneous expanding Ricci solitons. However, Petersen-Wylie \cite{PWRigidityWithSymmetry} have shown that any homogeneous gradient Ricci soliton is rigid, i.e. it is isometric to a quotient of $N \times \R^k,$ where $N$ is Einstein with Einstein constant $\lambda$ and $\R^k$ is equipped with the Euclidean metric and soliton potential $\frac{\lambda}{2} |x|^2.$ 

Therefore it is natural to assume that the Ricci soliton is of {\em cohomogeneity one.} That is, a Lie group acts isometrically on $(M,g)$ and the generic orbit is of codimension one. This will be the setting of this paper. A systematic investigation was initiated by Dancer-Wang \cite{DWCohomOneSolitons} who set up the general framework. Previous examples include the first non-trivial compact Ricci soliton due to Cao \cite{CaoSoliton} and Koiso \cite{KoisoSoliton} or the examples of Feldman-Ilmanen-Knopf \cite{FIKSolitons}, which include the first non-compact shrinking Ricci soliton. It is worth noting that all of these examples, as well as their generalisations due to Dancer-Wang \cite{DWCohomOneSolitons}, are {\em K\"ahler.} In fact, all currently known non-trivial compact Ricci solitons are K\"ahler. On the other hand, Angenent-Knopf \cite{AngenentKnopfRSConicalSingNonuniqueness} constructed non-compact, non-K\"ahler shrinking Ricci solitons.

Hamilton's cigar is also K\"ahler, whereas its higher dimensional analogue, the rotationally symmetric steady soliton on $\R^n$, $n>2,$ the Bryant soliton, is {\em non}-K\"ahler. By extending these examples in a series of papers and then in joint work with Buzano and Gallaugher, Dancer-Wang constructed steady and expanding Ricci solitons of multiple warped product type \cite{DWExpandingSolitons, DWSteadySolitons}, \cite{BDGWExpandingSolitons}, \cite{BDWSteadySolitons}. They also numerically investigated the case where the isotropy representation of the principal orbit $G/K$ consists of two inequivalent $\Ad_K$-invariant irreducible real summands and found numerical evidence for the existence of continuous families of complete steady and expanding Ricci solitons on certain non-trivial vector bundles in \cite{BDGWExpandingSolitons, BDGWSteadySolitons}. This paper gives a rigorous construction thereof:

\vspace{2mm}

Let $G$ be a compact Lie group and let $K \subset H \subset G$ be closed subgroups such that $H/K=S^{d_S}$. Then $H$ acts linearly on $\R^{d_S+1}$ and the associated vector bundle $G \times_H \R^{d_S+1}$ is a cohomogeneity one manifold. Examples where the Lie algebra of $G/K$ decomposes into two inequivalent $\Ad_K$-invariant irreducible real summands include the triples
\begin{align}
(G,H,K) & = (Sp(1) \times Sp(m+1), Sp(1) \times Sp(1) \times Sp(m),  Sp(1) \times Sp(m)), \nonumber \\ 
(G,H,K) & = (Sp(m+1), Sp(1) \times Sp(m), U(1) \times Sp(m)), \label{GroupDiagrams} \\  
(G,H,K) & = (Spin(9), Spin(8), Spin(7)). \nonumber
\end{align}
These examples come from the Hopf fibrations, cf. \cite{BesseEinstein}. In the first and third case, the associated vector bundle is diffeomorphic to $\mathbb{H}P^{m+1} \setminus \left\{ \text{ point } \right\}$ and $CaP^2 \setminus \left\{ \text{ point } \right\},$ respectively. The main theorem is the following:

\begin{maintheorem} 
On $CaP^2 \setminus \left\{ \text{ point } \right\},$ $\mathbb{H}P^{m+1} \setminus \left\{ \text{ point } \right\}$ for $m \geq 1$ and on the vector bundle associated to $(G,H,K) = (Sp(m+1), Sp(1) \times Sp(m), U(1) \times Sp(m))$ for $m \geq 3,$ there exist a $1$-parameter family of non-homothetic complete steady and a $2$-parameter family of non-homothetic complete expanding Ricci solitons.

The steady Ricci solitons are asymptotically paraboloid and thus non-collapsed. The expanding Ricci solitons are asymptotically conical.
\label{MainTheoremTwoSummands}
\end{maintheorem}

Notice that non-trivial gradient steady and expanding Ricci solitons must be non-compact. Furthermore, due to Perelman's \cite{Perelman1} no-local collapsing theorem, blow up limits of finite time Ricci flow singularities are necessarily non-collapsed.

\vspace{2mm}

The construction of the Ricci solitons in Theorem \ref{MainTheoremTwoSummands} partially carries over to the case of complex line bundles over Fano K\"ahler-Einstein manifolds, where Cao \cite{CaoSoliton} and Feldman-Ilmanen-Knopf \cite{FIKSolitons} previously constructed {\em K\"ahler} Ricci solitons. In contrast, Theorem \ref{MainTheoremRSOnLineBundles} exhibits continuous families of complete {\em non}-K\"ahler steady and expanding Ricci solitons.

\begin{maintheorem}
Let $(V,J,g)$ be a Fano K\"ahler-Einstein manifold of real dimension $d$. Suppose that the first Chern class is given by $c_1(V,J) = p \rho$ for an indivisible class $\rho \in H^2(V,J)$ and $\Ric_g = pg.$ For $q \in \Z$ let $\pi \colon P_q \to V$ be the principal circle bundle with Euler class $q \pi^{*} \rho$ and let $L_q$ be the total space of the associated complex line bundle. 

If $2p^2 > (d+2)q^2>0$ there exist a $1$-parameter family of non-homothetic complete steady Ricci solitons and a $2$-parameter family of non-homothetic complete expanding Ricci solitons on $L_q.$ In particular there exist non-K\"ahler Ricci solitons on $L_q.$
\label{MainTheoremRSOnLineBundles}
\end{maintheorem}

In the steady case these Ricci solitons were independently discovered by Stolarski \cite{StolarskiSteadyRSOnCxLineBundles} and Appleton \cite{AppletonSteadyRS}, who use different techniques.

\vspace{2mm}

The proof of Theorem \ref{MainTheoremTwoSummands} establishes that the Ricci soliton metrics correspond to trajectories in a bounded region of a phase space, which implies completeness. This method also applies to Einstein metrics. In particular, in the situation of Theorem \ref{MainTheoremTwoSummands}, the methods of this paper provide an alternative construction of Ricci flat metrics and Einstein metrics with negative scalar curvature due to B\"ohm \cite{BohmNonCompactEinstein}, see also remark \ref{RemarkBoehmSetUpAndProofCompleteness}. 

The associated coordinate change moreover allows good control on the trajectories close to the singular orbit. In the Einstein case this also yields an alternative approach to the following result of B\"ohm \cite{BohmInhomEinstein, BohmNonCompactEinstein}: The two summands Einstein metrics converge to explicit solutions with conical singularities as the volume of the singular orbit tends to zero. In comparison to B\"ohm's work, the main technical simplification is that the methods of this paper do not use the Poincar\'e-Bendixson theorem, see remark \ref{RemarkConvergenceConeSolutions}. As an application, an analysis of the {\em Ricci flat} trajectories will be used to reconstruct Einstein metrics of positive scalar curvature due to B\"ohm \cite{BohmInhomEinstein}. 

\vspace{2mm}

The vector bundles associated to the two families of group diagrams in \eqref{GroupDiagrams} also admit {\em explicit} Ricci flat metrics in the lowest dimensional case $m=1.$ These are in fact of special holonomy $G_2$ and $Spin(7),$ respectively, and were discovered earlier by Bryant-Salamon \cite{BSExceptionalHolonomy} and Gibbons-Page-Pope \cite{GPPEinsteinOnSphereR3R4bundles}. However, it is worth noting that these metrics correspond to {\em linear} trajectories in the above phase space, see theorem \ref{ExplicitRFTrajectories}.

\vspace{2mm}

The techniques in this paper moreover apply if the Bakry-\'Emery Ricci tensor $\Ric + \Hess u$ is replaced with the more general version $\Ric - \Hess u - \frac{1}{m} du \otimes du.$ For any $m \in (0, \infty]$ this leads to the notion of $m$-quasi-Einstein metrics, i.e. Riemannian manifolds which satisfy the curvature condition 
\begin{equation*}
\Ric + \Hess u - \frac{1}{m} du \otimes du + \frac{\varepsilon}{2} g = 0
\end{equation*}
for $u \in C^{\infty}(M)$ and $\varepsilon \in \R.$ These metrics play an important role in the study of Einstein warped products, cf. \cite{CaseSMMSAndQEM} or \cite{HPWUniquenessWarpedProductEinstein} and references therein. 

The initial value problem for cohomogeneity one $m$-quasi-Einstein manifolds will be discussed in the spirit of Eschenburg-Wang \cite{EWInitialValueEinstein} and Buzano \cite{BuzanoInitialValueSolitons}, see theorem \ref{QEMInitialValueTheorem}, and the $m$-quasi-Einstein analogue of Theorem \ref{MainTheoremTwoSummands} is proven in theorem \ref{TwoSummandsQEM}. 

Furthermore, the setting of $m$-quasi Einstein metrics allows a unified proof of the existence of Einstein metrics and Ricci soliton metrics on $\R^{d_1+1} \times M_2 \times \ldots \times M_r,$ for $d_1 \geq 1,$ where $(M_i, g_i)$ are Einstein manifolds with positive scalar curvature. This summaries earlier work due to B\"ohm \cite{BohmNonCompactEinstein}, Dancer-Wang \cite{DWSteadySolitons, DWExpandingSolitons} for $d_1 > 1$ and Buzano-Dancer-Gallaugher-Wang \cite{BDGWExpandingSolitons, BDWSteadySolitons} for $d_1 = 1:$

\begin{maintheorem}
Let $M_2, \ldots, M_r$ be Einstein manifolds with positive scalar curvature and let $d_1 \geq 1$ and $m \in (0, \infty].$ 

Then there is an $(r -1)$-parameter family of non-trivial, non-homothetic, complete, smooth Bakry-\'Emery flat $m$-quasi-Einstein metrics and an $r$-parameter family of non-trivial, non-homothetic, complete, smooth $m$-quasi-Einstein metrics with quasi-Einstein constant $\frac{\varepsilon}{2} > 0$ on $\R^{d_1+1} \times M_2 \times \ldots \times M_r.$
\label{MainTheoremQEM}
\end{maintheorem}

\textit{Structure of the paper.} Section \ref{CohomOneRicciSolitonEQ} reviews the Ricci soliton equation on cohomogeneity one manifolds and recalls some structure theorems. Section \ref{SectionNewSolitons} focuses on the two summands case, with section \ref{SectionSolitonsFromCircleBundles} discussing the case of complex line bundles over Fano K\"ahler-Einstein manifolds. Completeness of the metrics in Theorem \ref{MainTheoremTwoSummands} is shown in section \ref{CompletenessTwoSummands} and the asymptotic behaviour is studied in section \ref{SectionTwoSummandsAsymptotics}. Applications to convergence to cone solutions and B\"ohm's Einstein metrics of positive scalar curvature follow in sections \ref{SectionConvergenceToConeSolutions} and \ref{SectionBohmEinsteinMetricsPosScal}, respectively. Finally, section \ref{SectionQuasiEinsteinMetrics} discusses $m$-quasi-Einstein metrics and the proof of Theorem \ref{MainTheoremQEM}.

\vspace{2mm}

\textit{Acknowledgements.} I wish to thank my PhD advisor Prof. Andrew Dancer for constant support, helpful comments and numerous discussions.

\section{The cohomogeneity one Ricci soliton equation}
\label{CohomOneRicciSolitonEQ}
\subsection{The general set-up}
\label{SectionCohomOneSetUp}

The general framework for cohomogeneity one Ricci solitons has been set up by Dancer-Wang \cite{DWCohomOneSolitons}: Let $(M,g)$ be a Riemannian manifold and let $G$ be a compact connected Lie group which acts isometrically on $(M,g).$ The action is of {\em cohomogeneity one} if the orbit space $M / G$ is one-dimensional. In this case, choose a unit speed geodesic $\gamma \colon I \to M$ that intersects all principal orbits perpendicularly. Let $K=G_{\gamma(t)}$ denote the principal isotropy group. Then $\Phi \colon I \times G/K \to M_0,$ $(t,gK) \mapsto g \cdot \gamma(t)$ is a $G$-equivariant diffeomorphism onto an open dense subset $M_0$ of $M$ and the pullback metric is of the form $\Phi^{*}g=dt^2 + g_t,$ where $g_t$ is a $1$-parameter family of metrics on the principal orbit $P=G/K.$ Let $N = \Phi_*( \frac{\partial}{\partial t})$ be a unit normal vector field and let $L_t = \nabla N$ denote the shape operator of the hypersurface $\Phi(\left\lbrace t\right\rbrace  \times P).$ Via $\Phi,$ $L_t$ can be regarded as a one-parameter family of $G$-equivariant, $g_t$-symmetric endomorphisms of $TP$ which satisfies $\dot{g}_t = 2 g_t L_t.$ Similarly, let $\Ric_t$ be the Ricci curvature corresponding to $g_t.$ According to Eschenburg-Wang \cite{EWInitialValueEinstein} the Ricci curvature of the cohomogeneity one manifold $(M,g)$ is given by 
\begin{align*}
\Ric(X,N) & = - g_t(\delta^{\nabla^t} L_t, X) - d ( \tr(L_t) ) (X), \\
\Ric(N,N) & = -\tr(\dot{L})-\tr(L_t^2), \\
\Ric(X,Y) & = -g_t(\dot{L}(X),Y) - \tr(L_t)g_t(L_t(X),Y) + \Ric_t(X,Y),
\end{align*}
where $X, Y \in TP,$  $ \delta^{\nabla^t} \colon T^*P \otimes TP \to TP$ is the codifferential, and $L_t$ is regarded as a $TP$-valued $1$-form on $TP.$ 
Dancer-Wang \cite{DWCohomOneSolitons} observed that, since $G$ is compact, any cohomogeneity one Ricci soliton induces a Ricci soliton with a $G$-invariant vector field. Hence, in the case of gradient Ricci solitons, the soliton potential can be assumed to be $G$-invariant. The gradient Ricci soliton equation $\Ric + \Hess u + \frac{\varepsilon}{2} g= 0$ then takes the form
\begin{align}
-( \delta^{\nabla^t}L_t)^{\flat} - d(\tr(L_t)) & = 0, \label{CohomOneRSa}\\
- \tr( \dot{L}_t) - \tr(L_t^2) + \ddot{u} + \frac{\varepsilon}{2} & =0, \label{CohomOneRSb}\\
- \dot{L}_t - (- \dot{u} + \tr(L_t)) L_t + r_t + \frac{\varepsilon}{2} \mathbb{I} & = 0, \label{CohomOneRSc}
\end{align}
where $r_t = g_t \circ \Ric_t$ is the Ricci endomorphism, i.e. $g_t(r_t(X),Y)=\Ric_t(X,Y)$ for all $X,Y \in TP.$ Conversely, the above system induces a gradient Ricci soliton on $I \times P$ provided that the metric $g_t$ is defined via $\dot{g}_t = 2 g_t L_t.$ The special case of constant $u$ recovers the cohomogeneity one Einstein equations. 

From now on, for simplicity, the $t$-dependence may not be  stated explicitly.

\vspace{2mm}

It is an immediate consequence of \eqref{CohomOneRSb} that the mean curvature with respect to the volume element $e^{-u}d \Vol_g$ is a Lyapunov function if $\varepsilon \leq 0.$

\begin{proposition}
Fix $\varepsilon \leq 0.$ Then the generalised mean curvature $-\dot{u} + \tr(L)$ 
is monotonically decreasing along the flow of the cohomogeneity one Ricci soliton equation. 
\end{proposition}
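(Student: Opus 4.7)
The plan is to simply differentiate the generalised mean curvature with respect to $t$ and substitute equation \eqref{CohomOneRSb}. Concretely, I would compute
\begin{equation*}
\frac{d}{dt}\bigl(-\dot{u} + \tr(L_t)\bigr) = -\ddot{u} + \tr(\dot{L}_t),
\end{equation*}
and then observe that \eqref{CohomOneRSb} rearranges to $\tr(\dot{L}_t) - \ddot{u} = -\tr(L_t^2) + \frac{\varepsilon}{2}$. Hence
\begin{equation*}
\frac{d}{dt}\bigl(-\dot{u} + \tr(L_t)\bigr) = -\tr(L_t^2) + \frac{\varepsilon}{2}.
\end{equation*}

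Since $L_t$ is $g_t$-symmetric, $\tr(L_t^2) \geq 0$, and by hypothesis $\varepsilon \leq 0$, the right-hand side is non-positive, giving the claim. No further ingredients are required; the statement is essentially a direct reading of \eqref{CohomOneRSb}, and there is no genuine obstacle — the only things to check are that the trace of the derivative equals the derivative of the trace (immediate, since $L_t$ acts on the fixed bundle $TP$ under the identification $\Phi$) and that symmetry of $L_t$ with respect to $g_t$ forces $\tr(L_t^2)$ to be non-negative.
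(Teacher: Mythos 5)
Your computation is exactly the paper's (implicit) argument: the paper simply notes that the monotonicity is "an immediate consequence of \eqref{CohomOneRSb}", and your rearrangement $\frac{d}{dt}(-\dot{u}+\tr(L_t)) = -\tr(L_t^2) + \frac{\varepsilon}{2} \leq 0$ is precisely that consequence, with the correct justification that $g_t$-symmetry of $L_t$ gives real eigenvalues and hence $\tr(L_t^2)\geq 0$. The proposal is correct and matches the paper's approach.
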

If the Ricci soliton metric is at least $C^3$-regular, then the second Bianchi identity implies that the {\em conservation law} 
\begin{equation}
\ddot{u} + (-\dot{u}+\tr(L)) \dot{u} = C+ \varepsilon u
\label{GeneralConservationLaw}
\end{equation}
has to be satisfied for some constant $C \in \R.$ Using the equations \eqref{CohomOneRSb} and \eqref{CohomOneRSc} it can be reformulated as
\begin{equation}
\tr (r) + \tr ( L^2 ) - \left( - \dot{u} + \tr \left( L \right) \right) ^2 + (n-1) \frac{\varepsilon}{2}  = C +\varepsilon u.
\label{ReformulatedGeneralConsLaw}
\end{equation}
Recall that the scalar curvature $R$ of a cohomogeneity one Riemannian manifold $(M^{n+1},g)$ is given by 
$R= \tr(r) - \tr(L^2)- \tr(L)^2 - 2 \tr(\dot{L}).$ Hence it follows with \eqref{CohomOneRSc} that the conservation law \eqref{ReformulatedGeneralConsLaw} is just the cohomogeneity one version of Hamilton's \cite{HamiltonSingularites} general identity $R + | \nabla u |^2 + \varepsilon u  = \overline{C}$ for gradient Ricci solitons (where $\overline{C}= - C -\frac{n+1}{2} \varepsilon$). This also provides a formula for the scalar curvature in terms of the soliton potential:
\begin{equation}
R = - C - \varepsilon u - \dot{u}^2 - (n+1) \frac{\varepsilon}{2}.
\label{ScalarCurvatureRicciSoliton}
\end{equation}

\subsection{Ricci solitons with a singular orbit}
\label{MetricWithASingularOrbit}
From now on, assume that there is a singular orbit $Q = G/H$ at $t=0.$ That is, the orbit at $t=0$ is of dimension strictly less than the dimension of the principal orbit, and let $H=G_{\gamma(0)}$ denote its isotropy group. 

Building up on an idea of Back \cite{BackLocalTheoryofEquiv}, see also \cite{EWInitialValueEinstein}, Dancer-Wang \cite{DWCohomOneSolitons} have shown that in the presence of a singular orbit, equation \eqref{CohomOneRSc} implies \eqref{CohomOneRSa} automatically, provided that the metric is at least $C^2$-regular and the soliton potential is of class $C^3.$ Moreover, if in this case the conservation law \eqref{GeneralConservationLaw} is satisfied, then equation \eqref{CohomOneRSb} holds as well. Conversely, any trajectory of the Ricci soliton equations \eqref{CohomOneRSb}, \eqref{CohomOneRSc} that describes a $C^3$-regular metric with a singular orbit has to satisfy the conservation law \eqref{ReformulatedGeneralConsLaw}. 

The initial value problem for gradient cohomogeneity one Ricci solitons has been considered by Buzano \cite{BuzanoInitialValueSolitons}. Extending Eschenburg-Wang's work \cite{EWInitialValueEinstein} in the Einstein case, under a simplifying, technical assumption, the initial value problem can be solved close to a singular orbit regardless of the soliton being shrinking, steady or expanding. However, the solution may not be unique. For a precise statement, see theorem \ref{QEMInitialValueTheorem}.

Notice that $u(0)=0$ can be assumed, as the Ricci soliton equation is invariant under changing the potential by an additive constant. Furthermore, the existence of a singular orbit at $t=0$ imposes the smoothness condition $\dot{u}(0)=0$ on the soliton potential $u.$ If $d_S$ denotes the dimension of the collapsing sphere at the singular orbit, then the trace of the shape operator grows like $\tr(L) = \frac{d_S}{t} + O(t)$ as $t \to 0.$ Therefore the conservation law \eqref{GeneralConservationLaw} implies $\ddot{u}(0)= \frac{C}{d_S + 1}.$ To summarize:
\begin{equation}
u(0)=0, \ \ \ \dot{u}(0)=0, \ \ \ \ddot{u}(0)=\frac{C}{d_S +1}.
\label{InitialConditionsPotentialFunction}
\end{equation}

The existence of a singular orbit has consequences for the behaviour of the soliton potential. Proposition \ref{PotentialFunctionOfExpandingRSAlongCohomOneFlow} below follows from \cite[Propositions 2.3 and 2.4]{BDWSteadySolitons} and \cite[Proposition 1.11]{BDGWExpandingSolitons}. It should be emphasised that the properties hold along the flow of the Ricci soliton equation and completeness of the metric is not required. 

\begin{proposition}
Along any Ricci soliton trajectory with $\varepsilon \geq 0$ and $C<0$ in \eqref{InitialConditionsPotentialFunction} that corresponds to a cohomogeneity one manifold of dimension $n+1$ with a singular orbit at $t=0,$ for $t > 0$ and as long as the solution exists, the soliton potential satisfies $u(t),$ $\dot{u}(t)<0$ and also $\ddot{u}(t)<0$ if $\varepsilon >0$ or $\varepsilon = 0$ and $L_t \neq 0.$

Furthermore, if $\varepsilon = 0$ and $C \leq 0,$ there holds $\tr(L_t) \leq \frac{n}{t}$ for $t>0$ and as long as the solution exists.
\label{PotentialFunctionOfExpandingRSAlongCohomOneFlow}
\end{proposition}

\begin{remarkroman}
The quantity $\frac{\tr(L)}{-\dot{u}+\tr(L)}$ will appear frequently in later calculations. It is useful to note that it satisfies the differential equation
\begin{equation*}
\frac{d}{dt} \frac{\tr(L)}{-\dot{u}+\tr(L)} = \frac{1}{-\dot{u}+\tr(L)} \left\lbrace  \left( \frac{\tr(L)}{-\dot{u}+\tr(L)} -1 \right)\left( \tr(L^2) - \frac{\varepsilon}{2} \right) + \ddot{u}  \right\rbrace.
\end{equation*}
In particular, in the steady case, proposition \ref{PotentialFunctionOfExpandingRSAlongCohomOneFlow} shows that $\frac{\tr(L)}{-\dot{u}+\tr(L)}$ is monotonically decreasing as long as $-\dot{u}+\tr(L) >0.$ According to proposition \ref{CompleteSteadyRSAsymptotics} below, this is always true if the metric corresponds to a complete steady Ricci soliton. In this case, moreover, it follows that $\frac{\tr(L)}{-\dot{u}+\tr(L)} \to 0$ as $t \to \infty.$
\label{RemarkEvolutionOftrLOverGeneralisedMeanCurvature}
\end{remarkroman}

\subsection{Consequences of completeness}
\label{SectionConsequencesOfCompleteness}
If the solution corresponds to a non-trivial {\em complete} Ricci soliton metric, further restrictions on the asymptotics of the soliton potential and the metric are known. 

In the steady case, according to a result of Chen \cite{ChenStrongUniquenessRF}, the ambient scalar curvature of steady Ricci solitons satisfies $R \geq 0$ with equality if and only if the metric is Ricci flat. Then \eqref{ScalarCurvatureRicciSoliton} implies that $C \leq 0$ is a necessary for completeness and $C=0$ precisely corresponds to the Ricci flat case. Munteanu-Sesum \cite{MSgradientRicciSolitons} have shown that non-trivial complete steady Ricci solitons have at least linear volume growth and Buzano-Dancer-Wang used this to show in \cite[Proposition 2.4 and Corollary 2.6]{BDWSteadySolitons}:
\begin{proposition}
Along any trajectory which corresponds to a non-trivial {\em complete} steady cohomogeneity one Ricci soliton of dimension $n+1$ with a singular orbit at $t=0$ and integrability constant $C<0,$ the estimates 
\begin{align*}
0 < \tr(L) \leq \frac{n}{t}
 \ \text{ and } \ 
0 < - \dot{u} \tr(L) < R < 2 \sqrt{-C} \frac{n}{t} + \frac{n^2}{t^2}
\end{align*}
hold for $t > 0$ and the soliton potential satisfies
\begin{align*}
-\dot{u}(t) \to \sqrt{-C} \ \text{ and } \ \ddot{u}(t) \to 0
\end{align*}
as $t \to \infty.$
\label{CompleteSteadyRSAsymptotics}
\end{proposition}

In the case of expanding Ricci solitons, a similar result of Chen \cite{ChenStrongUniquenessRF} implies that the scalar curvature $R$ of a non-trivial, complete expanding Ricci soliton satisfies $R > - \frac{\varepsilon}{2}(n+1).$ It follows from \eqref{ScalarCurvatureRicciSoliton} that $0 \geq - \dot{u} ^2 > C + \varepsilon u$ holds on any complete expanding Ricci soliton. The smoothness condition \eqref{InitialConditionsPotentialFunction} at the singular orbit therefore requires $C<0$ as a {\em necessary} condition to construct non-trivial, complete expanding Ricci solitons. Conversely, Einstein metrics with negative scalar curvature correspond to trajectories with $C=0.$

Once the Ricci soliton is shown to be complete, it follows from results of Buzano-Dancer-Gallaugher-Wang \cite{BDGWExpandingSolitons} that any non-trivial, complete, gradient expanding Ricci soliton has at least logarithmic volume growth. This has consequences for the asymptotic behaviour of the soliton, see \cite[Equation (1.10) and Proposition 1.18]{BDGWExpandingSolitons}: There exists constants $a_0, a_1 >0$ and a time $t_0>0$ such that for all $t>t_0$
\begin{equation}
| \tr(L_t) |< \sqrt{\frac{n}{2} \varepsilon} \ \text{ and } \ a_1 t + a_0 < - \dot{u}(t) < \frac{\varepsilon}{2} t + \sqrt{-C}
\label{GeneralAsymptoticsExpandingRS}
\end{equation}
i.e. $- \dot{u}$ growths approximately linearly for $t$ large enough. 

\subsection{The B\"ohm functional}
\label{BohmFunctionalSection}

B\"ohm \cite{BohmNonCompactEinstein} introduced the functional $\mathscr{F}_0$ to the study of Einstein manifolds of cohomogeneity one. Subsequently it was considered by Dancer-Wang and their collaborators Buzano, Gallaugher and Hall in the context of cohomogeneity one Ricci solitons \cite{BDWSteadySolitons, BDGWExpandingSolitons, DHWShrinkingSolitons}. The significance of $\mathscr{F}_0$ lies in the fact that it is monotonic under mild assumptions.

To define it, let $v(t) = \sqrt{\det g_t}$ denote the relative volume of the principal orbits and let $L^{(0)} = L - \frac{1}{n}\tr(L) \mathbb{I}$ denote the trace less part of the shape operator. Then the B\"ohm functional is given by
\begin{equation}
\mathscr{F}_0= v ^{\frac{2}{n}} \left( \tr(r_t) + \tr(( L^{(0)})^2 )\right).
\label{BohmFunctional}
\end{equation}

The following proposition is due to Dancer-Hall-Wang \cite[Proposition 2.17]{DHWShrinkingSolitons}. 

\begin{proposition} Along the flow of a $C^3$-regular cohomogeneity one gradient Ricci soliton the B\"ohm functional $\mathscr{F}_0$ satisfies 
\begin{equation}
\frac{d}{dt} \mathscr{F}_0 = - 2 v^{\frac{n}{2}} \tr((L^{(0)})^2) \left( -\dot{u} + \frac{n-1}{n} \tr(L) \right).
\label{DerivativeOfBohmFunctional}
\end{equation}
\label{PropositionBohmFunctional}
\end{proposition}

\begin{remarkroman}
The $C^3$-regularity condition guarantees that the conservation law \eqref{GeneralConservationLaw} is satisfied. On the other hand the existence of a singular orbit along the trajectory is not required to prove \eqref{DerivativeOfBohmFunctional}.
\end{remarkroman}

\section{New Examples of Ricci solitons}
\label{SectionNewSolitons}

\subsection{The geometric set-up}
\label{SectionGeometricSetUp}

Let $(M^{n+1},g)$ be a Riemannian manifold and suppose that $G$ is a compact connected Lie group which acts isometrically on $(M,g)$. Assume that the orbit space is a half open interval and let $K \subset H$ denote the isotropy groups of the principal and singular orbit, respectively. It follows that $M$ is diffeomorphic to the open disc bundle $G \times_H D^{d_S+1} \to G/H,$ where $D^{d_S+1}$ denotes the normal disc to the singular orbit $G/H$ and $S^{d_S} = H/K$ is the collapsing sphere. Conversely, let $G$ be a compact connected Lie group and let $K \subset H$ be closed subgroups such that $H /K$ is a sphere. Then $G \times_H \R^{d_S+1}$ is a cohomogeneity one manifold with principal orbit $G/K.$ Suppose that the non-principal orbit $G/H$ is singular, i.e. of dimension strictly less than $G/K.$

Choose a bi-invariant metric $b$ on $G$ which induces the metric of constant curvature $1$ on $H/K.$ The {\em two summands case} assumes that the space of $G$-invariant metrics on the principal orbit is two dimensional: Let $\mathfrak{g}=\mathfrak{k} \oplus \mathfrak{p}$ be an $\Ad(K)$-invariant decomposition of the Lie algebra of $G$ and suppose furthermore that $\mathfrak{p}$ decomposes into two inequivalent, $b$-orthogonal, irreducible $K$-modules, $\mathfrak{p} = \mathfrak{p}_1 \oplus  \mathfrak{p}_2.$ In fact, $\mathfrak{p}_1$ can be identified with the tangent space to the collapsing sphere $S^{d_S} = H /K$ and $\mathfrak{p}_2$ with the tangent space of the singular orbit $Q=G/H.$ Let $g_S = b_{|\mathfrak{p}_1}$ and $g_Q = b_{|\mathfrak{p}_2}$ denote the induced metrics. Then, away from the singular orbit $Q,$ the metric on $M$ is given by
\begin{equation}
g_{M \setminus Q} = dt^2 +f_1(t)^2 g_S + f_2(t)^2 g_Q
\label{TwoSummandsMetric}
\end{equation} 
and the shape operator of the principal orbit takes the form
\begin{align*}
L_t=\left( \frac{\dot{f}_1}{f_1} \mathbb{I}_{d_1},  \frac{\dot{f}_2}{f_2} \mathbb{I}_{d_2} \right),
\end{align*}
where $d_1=d_S$ is the dimension of the collapsing sphere and $d_2$ is the dimension of the singular orbit. Furthermore, it follows from the theory of Riemannian submersions and the O'Neill calculus, cf. \cite{BohmInhomEinstein}, that the Ricci endomorphism takes the form
\begin{align}
r_t =  \left( \left\lbrace  \frac{A_1}{d_1} \frac{1}{f_1^2} + \frac{A_3}{d_1}   \frac{f_1^2}{f_2^4} \right\rbrace \mathbb{I}_{d_1},
              \left\lbrace  \frac{A_2}{d_2} \frac{1}{f_2^2} - \frac{2 A_3}{d_2} \frac{f_1^2}{f_2^4} \right\rbrace \mathbb{I}_{d_2} \right).
\label{RicciEndomorphismTwoSummands}
\end{align}

Here the constants $A_i \geq 0$ are defined as follows: $A_1 = d_1 (d_1 -1)$, $A_2 = d_2 \Ric^Q,$ where $\Ric^Q$ is the Einstein constant of the isotropy irreducible space $(Q,g_Q),$ and $A_3 = d_2 || A ||^2$, where $ || A || \geq 0$ appears naturally in the theory of Riemannian submersions, cf. \cite{BohmInhomEinstein}: Fix the background metric $g_P=g_S + g_Q$ on the principal orbit $P$ and let $\nabla^{g_P}$ be the corresponding Levi-Civita connection. If $H_1, \ldots, H_{d_2}$ is an orthonormal basis of horizontal vector fields with respect to the Riemannian submersion $(G/K,g_P) \to (G/H,g_Q)$, then $|| A ||^2 = \sum_{i=1}^{d_2} g_S( (\nabla_{H_1}^{g_P} H_i)_{|v}, (\nabla_{H_1}^{g_P} H_i)_{|v} )$ is the norm of an O'Neill tensor associated to the above Riemannian submersion, where $( \cdot )_{|v}$ denotes the projection onto the tangent space of the fibre $S^{d_1}=S^{d_S}.$ 

Warped product metrics with two homogeneous summands provide examples with $||A||=0.$ Examples with $||A|| > 0$ are given by the total spaces of non-trivial disc bundles which are induced by the Hopf fibrations, cf. \cite{BesseEinstein}. The following table, which lists the corresponding group diagrams and associated constants, is taken from \cite[Table 1]{BohmInhomEinstein}.

\vspace{2mm}

\begin{table}[!ht]
$\begin{array}{l|l|l|l|l}
\text{} & \mathbb{C}P^{m+1} & \mathbb{H}P^{m+1} & F^{m+1} & CaP^2  \\ \hline
G & U(m+1) & Sp(1) \times Sp(m+1) & Sp(m+1) & Spin(9) \\
H & U(1) \times U(m) & Sp(1) \times Sp(1) \times Sp(m) & Sp(1) \times Sp(m) & Spin(8) \\
K & U(m) & Sp(1) \times Sp(m) & U(1) \times Sp(m) & Spin(7) \\
d_1 & 1 & 3 & 2 & 7 \\
d_2 & 2m & 4m & 4m & 8 \\
||A||^2 & 1 & 3 & 8 & 7 \\
\Ric^Q & 2m+2 & 4m+8 & 4m+8 & 28
\end{array}$
\caption{Group diagrams associated to Hopf fibrations\label{HopfFibrationsTable}}
\end{table}

\vspace{-4mm}

The soliton potential $u$ will be assumed to be invariant under the action of $G$, $u=u(t),$ and $u(0)=0$ will be fixed. If $u$ satisfies the smoothness conditions \eqref{InitialConditionsPotentialFunction} and the functions $f_1,$ $f_2$ satisfy 
\begin{equation}
f_1(0)=0, \ \dot{f}_1(0)=1 \ \text{ and } \ f_2(0)= \bar{f} > 0, \ \dot{f}_2(0)=0,
\label{SmoothnessMetricTwoSummandsGeometricSetUpSection}
\end{equation}
then the work of Buzano \cite{BuzanoInitialValueSolitons} implies that there is a unique local solution of the Ricci soliton equations with these initial conditions, and it extends the soliton potential and the metric smoothly over the singular orbit.

\begin{remarkroman} 
The two summands case is also the set-up for B\"ohm's work \cite{BohmInhomEinstein, BohmNonCompactEinstein} on Einstein manifolds. In fact, the Lyapunov function \eqref{LyapunovForNonTrivialBundles} is motivated by B\"ohm's work. In contrast, B\"ohm's construction relies on the Poincar\'e-Bendixson theorem. In the Ricci soliton case, however, the extra degree of freedom of the soliton potential does not allow a similar reduction of the Ricci soliton equations to a planar ODE and a new proof is required, see remark \ref{RemarkConvergenceConeSolutions}. Conversely, the methods of section \ref{CompletenessTwoSummands} recover B\"ohm's non-compact Einstein manifolds.
\label{RemarkBoehmSetUpAndProofCompleteness}
\end{remarkroman}

\subsection{Qualitative ODE analysis}
\label{CompletenessTwoSummands}

The Ricci soliton equations for the two summands system can be read off from the discussion in section \ref{SectionGeometricSetUp} and equations \eqref{CohomOneRSb} and \eqref{CohomOneRSc}. However, in this form, the equations become singular at the singular orbit. Therefore, a rescaling will be introduced which smooths the Ricci soliton equation close to the initial value. It was effectively used by Dancer-Wang \cite{DWSteadySolitons} and is motivated by Ivey's work \cite{IveyNewExamplesRS}. Notice that under the coordinate change
\begin{align}
X_i & = \frac{1}{- \dot u + \tr(L)} \frac{\dot{f}_i}{f_i}, \ \ \  \ Y_i  = \frac{1}{- \dot u + \tr(L)} \frac{1}{f_i}, \ \text{for} \ i=1,2, 
\label{RescaledTwoSummandsVariables} 
\\
\mathcal{L} & = \frac{1}{- \dot u + \tr(L)}, \ \ \ \ \ \ \frac{d}{ds} = \frac{1}{-\dot{u}+\tr(L)} \frac{d}{dt} \nonumber
\end{align}
the cohomogeneity one two summands Ricci soliton equations reduce to the ODE system
\begin{align}
X_1^{'} & = X_1 \left(  \sum_{i=1}^2 d_i X_i^2 - \frac{\varepsilon}{2} \mathcal{L}^2 -1 \right)  + \frac{A_1}{d_1}  Y_1^2+\frac{\varepsilon}{2} \mathcal{L}^2 + \frac{A_3}{d_1} \frac{Y_2^4}{Y_1^2}, 
\label{RescaledTwoSummandsODE}
\\
X_2^{'} & = X_2 \left(  \sum_{i=1}^2 d_i X_i^2 - \frac{\varepsilon}{2} \mathcal{L}^2 -1 \right)  + \frac{A_2}{d_2}  Y_2^2+\frac{\varepsilon}{2} \mathcal{L}^2 -  \frac{2 A_3}{d_2} \frac{Y_2^4}{Y_1^2}, \nonumber \\
Y_j^{'} & =Y_j  \left(  \sum_{i=1}^2 d_i X_i^2 - \frac{\varepsilon}{2} \mathcal{L}^2 - X_j \right), \nonumber \\
\mathcal{L}^{'} & =\mathcal{L} \left(  \sum_{i=1}^2 d_i X_i^2 - \frac{\varepsilon}{2} \mathcal{L}^2 \right). \nonumber
\end{align}
Here and in the following, the $\frac{d}{ds}$ derivative is denoted by a prime $'.$ On the other hand, the $\frac{d}{dt}$ derivative will always correspond to a dot $\dot{}$ .

To establish some basic properties of this ODE system, it will be enough to assume that $d_1, d_2 > 0,$ $A_1, A_2 > 0$ and $A_3 \geq 0.$ However, in the main body of the paper $d_1 > 1$ and $A_1, A_2, A_3 > 0$ will be assumed. 

\begin{remarkroman}
(a) The case $A_3 = 0$ is already well understood from works on multiple warped products, see \cite{IveyNewExamplesRS}, \cite{GKExpandingRS}, \cite{DWExpandingSolitons, DWSteadySolitons}, \cite{BDGWExpandingSolitons}, \cite{BDWSteadySolitons} and \cite{AngenentKnopfRSConicalSingNonuniqueness}. 

(b) The case $d_1=1$ implies $A_1=0$ in geometric applications. In this case Cao-Koiso \cite{CaoSoliton},\cite{KoisoSoliton} and Feldman-Ilmanen-Knopf \cite{FIKSolitons} found explicit solutions to the associated {\em K\"ahler} Ricci solitons equations. {\em Non-K\"ahler} steady and expanding Ricci solitons will be constructed in section \ref{SectionSolitonsFromCircleBundles}. In the steady case these were independently found by Stolarski \cite{StolarskiSteadyRSOnCxLineBundles} and Appleton \cite{AppletonSteadyRS}, who use different techniques.
\end{remarkroman}

Notice that time, metric and soliton potential can be recovered from the ODE via
\begin{align*}
t(s) = t(s_0) + \int_{s_0}^{s} \mathcal{L}( \tau ) d \tau \ \text{ and } \ f_i  = \frac{\mathcal{L}}{Y_i}, \ \text{for } \ i=1,2, \ \text{ and } \ \dot{u} = \frac{\sum_{i=1}^2 d_i X_i - 1}{\mathcal{L}}. 
\end{align*}

In the new coordinate system, the smoothness conditions for the metric in \eqref{SmoothnessMetricTwoSummandsGeometricSetUpSection} and the soliton potential in \eqref{InitialConditionsPotentialFunction} correspond to the stationary point
\begin{align}
X_1 = Y_1 = \frac{1}{d_1} \ \text{ and } \ X_2  = Y_2 = 0 \ \text{ and } \ \mathcal{L} =0.
\label{InitialCriticalPoint}
\end{align}
Trajectories emanating from \eqref{InitialCriticalPoint} will be parametrised so that \eqref{InitialCriticalPoint} corresponds to $s = - \infty.$ 

The conservation law \eqref{ReformulatedGeneralConsLaw} takes the form
\begin{equation}
\sum_{i=1}^2 d_i X_i^2 + \sum_{i=1}^2 A_i Y_i^2 - A_3 \frac{Y_2^4}{Y_1^2} + (n-1)\frac{\varepsilon}{2} \mathcal{L}^2 = 1 + \left( C + \varepsilon u \right) \mathcal{L}^2.
\label{GeneralTwoSummandsConsLaw}
\end{equation}

Consider the functions 
\begin{align*}
\mathcal{S}_1 & = \sum_{i=1}^2 d_i X_i^2 + \sum_{i=1}^2 A_i Y_i^2 - A_3 \frac{Y_2^4}{Y_1^2} + (n-1)\frac{\varepsilon}{2} \mathcal{L}^2 -1, \\
\mathcal{S}_2 & = \sum_{i=1}^2 d_i X_i -1.
\end{align*}

Notice that $\mathcal{S}_1$ occurs in the conservation law and $\mathcal{S}_2 = \frac{\dot{u}}{- \dot{u} + \tr(L)}$ encodes the derivative of the soliton potential in the rescaled coordinates.

Fix $\varepsilon \geq 0$ and recall from section \ref{SectionConsequencesOfCompleteness} that $C \leq 0$ is a necessary condition to obtain trajectories that correspond to complete steady or expanding Ricci solitons and that $C=0$ is the Einstein case. Due to the initial conditions \eqref{InitialConditionsPotentialFunction} and proposition \ref{PotentialFunctionOfExpandingRSAlongCohomOneFlow}, the soliton potential satisfies $u, \dot{u} \leq 0$ if $C \leq 0,$ and away from the singular orbit equality can only occur in the Einstein case. 

Therefore, any trajectory with $\varepsilon \geq 0$ and $C \leq 0$ satisfies $\mathcal{S}_1, \mathcal{S}_2 \leq 0.$ Equality occurs at the initial stationary point \eqref{InitialCriticalPoint} and then {\em Einstein} trajectories lie in the locus 
\begin{equation}
\left\{ \mathcal{S}_1 = 0\right\} \cap \left\{ \mathcal{S}_2 = 0\right\}
\label{EinsteinLocus}
\end{equation}
whereas trajectories of {\em complete non-trivial} Ricci solitons are contained in the locus
\begin{equation}
\left\{ \mathcal{S}_1 < 0\right\} \cap \left\{ \mathcal{S}_2 < 0\right\}.
\label{SolitonLocus}
\end{equation}

Conversely, trajectories in these loci correspond to Einstein metrics and non-trivial Ricci solitons. 

The invariance of the above loci for $\varepsilon \geq 0$ follows from the Ricci soliton ODE as a direct calculation verifies
\begin{align*}
\frac{1}{2} \frac{d}{ds} \mathcal{S}_1 & = \left( \sum_{i=1}^2 d_i X_i^2 - \frac{\varepsilon}{2} \mathcal{L}^2 \right) \mathcal{S}_1 + \frac{\varepsilon}{2} \mathcal{L}^2 \cdot \mathcal{S}_2 , \\
 \frac{d}{ds} \mathcal{S}_2 & = \mathcal{S}_1 + \left(  \sum_{i=1}^2 d_i X_i^2 - \frac{\varepsilon}{2} \mathcal{L}^2 - 1 \right) \mathcal{S}_2.
\end{align*}

Now the existence of trajectories which lie in one of the above loci and in the unstable manifold of the critical point \eqref{InitialCriticalPoint} will be discussed. Different trajectories will correspond to non-homothetic Einstein or Ricci soliton metrics.

The linearisation of the Ricci soliton ODE at the initial stationary point \eqref{InitialCriticalPoint} is given by
\begin{equation*}
\begin{pmatrix}
\frac{3}{d_1}-1 & 0 & \frac{2(d_1-1)}{d_1} & 0 & 0 \\
0 & \frac{1}{d_1}-1 & 0 & 0 & 0 \\
\frac{1}{d_1} & 0 & 0 & 0 & 0 \\
0 & 0 & 0 & \frac{1}{d_1} & 0 \\
0 & 0 & 0 & 0 & \frac{1}{d_1}
\end{pmatrix}.
\end{equation*}

The corresponding eigenvalues are hence $\frac{2}{d_1},$ and both $\frac{1}{d_1}-1$ and $\frac{1}{d_1}$ appear twice. In particular, the critical point is {\em hyperbolic} if $d_1 >1.$ The corresponding eigenspaces are given by $E_{\frac{2}{d_1}} = \operatorname{span} \left\lbrace (2,0,1,0,0) \right\rbrace ,$ $E_{\frac{1}{d_1}-1}= \operatorname{span} \left\lbrace (0,1,0,0,0), (d_1-1,0,-1,0,0) \right\rbrace$ and $E_{\frac{1}{d_1}} = \operatorname{span} \left\lbrace (0,0,0,1,0), (0,0,0,0,1) \right\rbrace.$ Notice that the stationary point \eqref{InitialCriticalPoint} lies in the set $\left\{ \mathcal{S}_1 = 0\right\} \cap \left\{ \mathcal{S}_2 = 0\right\}.$ Furthermore, $\left\{ \mathcal{S}_1 = 0\right\}$ is a submanifold of $\R^5$ if $Y_1 \neq 0$ and its tangent space at \eqref{InitialCriticalPoint} is $\operatorname{span} \left\lbrace (1,0,d_1-1,0,0) \right\rbrace^{\perp}.$ Similarly, $\left\{ \mathcal{S}_2 = 0\right\}$ is a submanifold with tangent space $\operatorname{span} \left\lbrace (d_1,d_2,0,0,0) \right\rbrace^{\perp}$ at \eqref{InitialCriticalPoint}. Notice that both tangent spaces contain $E_{\frac{1}{d_1}}$ but not $E_{\frac{2}{d_1}}$ and that $E_{\frac{1}{d_1}} \oplus E_{\frac{2}{d_1}}$ is the tangent space to the unstable manifold.

According to the above discussion, trajectories in the unstable manifold of \eqref{InitialCriticalPoint} that either remain in the set $\left\{ \mathcal{S}_1 = 0\right\} \cap \left\{ \mathcal{S}_2 = 0\right\}$ or flow into $\left\{ \mathcal{S}_1 < 0\right\} \cap \left\{ \mathcal{S}_2 < 0\right\}$ need to be considered. Notice, however, that if $\varepsilon = 0$ the ODE for $\mathcal{L}$ decouples. Hence, the soliton system effectively reduces to a system in $X_i, Y_i$ for $i=1,2.$ Counting trajectories with respect to the possibly reduced system then gives the following result. 

\begin{proposition}
Suppose that $d_1 > 1.$ If $\varepsilon \neq 0,$ then there exists a $1$-parameter family of trajectories lying both in the unstable manifold of \eqref{InitialCriticalPoint} and the Einstein locus \eqref{EinsteinLocus} and a $2$-parameter family of trajectories lying both in the unstable manifold of \eqref{InitialCriticalPoint} and the Ricci soliton locus \eqref{SolitonLocus}.

If $\varepsilon =0,$ then the unstable manifold of \eqref{InitialCriticalPoint} with respect to the reduced two summands ODE in $X_1, X_2$ and $Y_1, Y_2$ contains a unique trajectory lying in the Einstein locus \eqref{EinsteinLocus} and a $1$-parameter family of trajectories lying in the Ricci soliton locus \eqref{SolitonLocus}. These give rise to an (up to scaling) unique Ricci flat metric and a $1$-parameter family of Ricci solitons with soliton potential $u=0$ at the singular orbit.
\label{NumberOfParameterFamilies}
\end{proposition}

Proposition \ref{NumberOfParameterFamilies} is in agreement with the theory of solutions to the initial value problem for cohomogeneity one Ricci solitons and Einstein metrics developed by Buzano \cite{BuzanoInitialValueSolitons} and Eschenburg-Wang \cite{EWInitialValueEinstein}, respectively. Their methods also carry over to the case $d_1 = 1.$ 

\vspace{2mm}

Notice that the ODE system \eqref{RescaledTwoSummandsODE} and the initial stationary point \eqref{InitialCriticalPoint} are invariant under changing the signs of $Y_2,$ $\mathcal{L}.$ Since $\mathcal{L}^{-1}=-\dot{u} + \tr(L) \to + \infty$ as $t \to 0,$ $\mathcal{L}>0$ will be assumed along the trajectories. The choice $f_2(0)= \bar{f}>0$ in \eqref{SmoothnessMetricTwoSummandsGeometricSetUpSection} implies $f_2(t)>0$ for small $t>0$ and thus $Y_2 > 0$ will be assumed. Recall that $\lim_{s \to - \infty} Y_1(s)=1.$ The ODEs for $Y_1, Y_2, \mathcal{L}$ imply that positivity of the variables is preserved along the flow.

The following lemma shows a basic dynamical property of the Ricci soliton ODE and sets up the discussion of the long time behaviour.

\begin{lemma}
Let $\varepsilon \geq 0$ and consider a trajectory of the two summands Ricci soliton ODE that emanates from \eqref{InitialCriticalPoint} at $s = - \infty$ and enters either \eqref{EinsteinLocus} or \eqref{SolitonLocus}.

Then there holds $X_1 >0$ for all finite $s$ and $X_2$ is positive for sufficiently negative $s.$ Moreover, suppose there is an $s_0 \in \R$ such that $X_2(s_0) <0.$ Then $X_2(s) < 0$ for all $s \geq s_0.$ 
\label{XVariablesPositiveInitially}
\end{lemma}
\begin{proof}
Recall that $\lim_{s \to - \infty} X_1 = 1/d_1 >0$ and in particular $X_1$ is positive initially. If there is an $s \in \R$ such that $X_1(s) = 0,$ then $X_1^{'}(s)>0.$ By continuity this implies $X_1 > 0$ everywhere.

The conservation law \eqref{GeneralTwoSummandsConsLaw} implies that $\sum_{i=1}^2 d_i X_i^2 -1 \leq A_3 \frac{Y_2^4}{Y_1^2} - \sum_{i=1}^2 A_i Y_i^2 < 0$ close to \eqref{InitialCriticalPoint} as $Y_1 \to \frac{1}{d_1}$ and $Y_2 \to 0.$ Similarly, $\frac{A_2}{d_2}  Y_2^2 -  \frac{2 A_3}{d_2} \frac{Y_2^4}{Y_1^2} >0$ for sufficiently negative times. If $X_2(s_0) < 0$ in this region, then the ODE
\begin{align*}
X_2^{'} = X_2 \left(  \sum_{i=1}^2 d_i X_i^2 - \frac{\varepsilon}{2} \mathcal{L}^2 -1 \right)  + \frac{A_2}{d_2}  Y_2^2+\frac{\varepsilon}{2} \mathcal{L}^2 -  \frac{2 A_3}{d_2} \frac{Y_2^4}{Y_1^2}
\end{align*}
implies that $X_2^{'}(s_0) > 0$ as $\varepsilon \geq 0.$ In particular $X_2(s) \leq X_2(s_0) < 0$ for all $s \leq s_0.$ This contradicts $X_2 \to 0$ as $s \to - \infty.$ 

If the last statement is not true, then there exist $s_{*} < s^{*}$ such that $X_2 < 0$ on $(s_{*}, s^{*})$ and
\begin{align*}
X_2(s_{*}) & = 0 \ \text{ and } \ X_2^{'}(s_{*}) \leq 0, \\
X_2(s^{*}) & = 0 \ \text{ and } \ X_2^{'}(s^{*}) \geq 0.
\end{align*}
It follows that $ \frac{A_2}{d_2} Y_2^2(s_{*}) + \frac{\varepsilon}{2} \mathcal{L}^2(s_{*}) - 2 \frac{A_3}{d_2} \frac{Y_2^4}{Y_1^2}(s_{*}) \leq 0$ which is equivalent to 
\begin{equation*}
\frac{A_2}{d_2} \leq \left[ 2 \frac{A_3}{d_2}  \left( \frac{Y_2}{Y_1} \right) ^2 - \frac{\varepsilon}{2} \left( \frac{\mathcal{L}}{Y_2} \right) ^2 \right](s_{*}).
\end{equation*}
Similarly, the second condition implies the reverse inequality at $s^{*}.$ Therefore,
\begin{align*}
0 & \leq \left[ 2 \frac{A_3}{d_2}  \left( \frac{Y_2}{Y_1} \right) ^2 - \frac{\varepsilon}{2} \left( \frac{\mathcal{L}}{Y_2} \right) ^2 \right](s_{*}) - \left[ 2 \frac{A_3}{d_2}  \left( \frac{Y_2}{Y_1} \right) ^2 - \frac{\varepsilon}{2} \left( \frac{\mathcal{L}}{Y_2} \right) ^2 \right](s^{*}) \\
 & = \frac{d}{ds} \left[ 2 \frac{A_3}{d_2}  \left( \frac{Y_2}{Y_1} \right) ^2 - \frac{\varepsilon}{2} \left( \frac{\mathcal{L}}{Y_2} \right) ^2 \right](\xi) \cdot (s_{*} - s^{*} )
\end{align*}
for some $\xi \in (s_{*}, s^{*}).$ On the other hand, observe that
\begin{align*}
\frac{d}{ds} \frac{Y_2}{Y_1} = \frac{Y_2}{Y_1} (X_1-X_2) \ \text{ and } \
\frac{d}{ds} \frac{\mathcal{L}}{Y_2} = \frac{\mathcal{L}}{Y_2} X_2.
\end{align*}
Therefore, $X_2( \xi) < 0,$ $\varepsilon \geq 0$ and $s_{*} < s^{*}$ imply
\begin{align*}
0 & \leq \frac{d}{ds} \left[ 2 \frac{A_3}{d_2}  \left( \frac{Y_2}{Y_1} \right) ^2 - \frac{\varepsilon}{2} \left( \frac{\mathcal{L}}{Y_2} \right) ^2 \right](\xi) \cdot (s_{*} - s^{*} ) \\
& = \ 2 \ \left[ 2 \frac{A_3}{d_2}  \left( \frac{Y_2}{Y_1} \right) ^2 (X_1-X_2) - \frac{\varepsilon}{2} \left( \frac{\mathcal{L}}{Y_2} \right) ^2 X_2 \right](\xi) \cdot (s_{*} - s^{*} ) < 0,
\end{align*}
which is a contradiction.
\end{proof}

\begin{remarkroman}
In fact, the possibility that $X_2 < 0$ is the only obstruction to long time existence. Geometrically this says that along the trajectory of an incomplete metric the shape operator cannot remain positive definite. 

If $A_3=0,$ then $X_2 > 0$ is immediate and the Einstein and Ricci soliton loci \eqref{EinsteinLocus} and \eqref{SolitonLocus}, respectively, are bounded regions in phase space. Completeness of the metric then follows as in propositions \ref{CompletenessEpsZeroTwoSummands} and \ref{CompletenessEpsPosTwoSummands} below. Geometrically the case $A_3=0$ corresponds to the doubly warped product situation which was considered by Ivey \cite{IveyNewExamplesRS}, Gastel-Kronz  \cite{GKExpandingRS}, Dancer-Wang \cite{DWExpandingSolitons, DWSteadySolitons} and Angenent-Knopf \cite{AngenentKnopfRSConicalSingNonuniqueness}.
\end{remarkroman}

If $A_3>0,$ notice that $X_2 > 0$ clearly holds as long as $\frac{Y_2}{Y_1} < \sqrt{\frac{A_2}{2 A_3}}.$ Therefore, the quotient
\begin{equation*}
\omega = \frac{Y_2}{Y_1}.
\end{equation*}
plays a central role in the discussion. Observe that $\omega$ satisfies 
\begin{align}
\omega{'} = \omega (X_1-X_2).
\label{DByDsOmega}
\end{align}
In fact this implies that the Ricci soliton equation is equivalent to an ODE system with polynomial right hand side.

In order to obtain an a priori bound for $\omega,$ fix $d_1>1$ and consider the function
\begin{align}
\widehat{\mathcal{G}}(\omega) = \frac{A_1}{d_1} \frac{\omega^{2(d_1-1)}}{2(d_1-1)}  -  \frac{A_2}{d_2} \frac{\omega^{2d_1}}{2d_1} + A_3 \left( \frac{1}{d_1} + \frac{2}{d_2} \right) \frac{\omega^{2(d_1+1)}}{2(d_1+1)}.
\label{FunctionGHatTwoSummandsCase}
\end{align}
Along trajectories of the two summands Ricci soliton ODE there holds
\begin{align*}
\frac{d}{ds} \widehat{\mathcal{G}}(\omega) = \omega^{2(d_1-1)} \left\{ \frac{A_1}{d_1} - \frac{A_2}{d_2} \omega^2 + A_3 \left( \frac{1}{d_1} + \frac{2}{d_2} \right) \omega^4  \right\} \left( X_1 - X_2 \right)
\end{align*}
and non-zero roots of $\widehat{\mathcal{G}}$ are of the form
\begin{align*}
\omega^2 = \frac{1}{2} \frac{A_2}{A_3} \frac{d_1+1}{2d_1+d_2} \left\lbrace 1 \pm \sqrt{1 - 4 \frac{A_1 A_3}{A_2^2} \frac{d_2 (2d_1+d_2)}{(d_1-1)(d_1 + 1)}} \right\rbrace.
\end{align*}
In particular, there exist two positive roots $0 < \hat{\omega}_1 < \hat{\omega}_2$ if and only if 
\begin{equation}
\widehat{D} = \frac{A_2^2}{d_2^2} - 4 \frac{A_1}{d_1(d_1-1)} \frac{A_3}{d_2} \frac{d_1}{d_1+1} (2d_1 +d_2) > 0.
\label{DefinitionDHat}
\end{equation}
Moreover, in this case, $\hat{\omega}_1^2 < \frac{A_2}{2 A_3}.$
\begin{proposition} Suppose that $d_1 > 1,$ $\widehat{D} >0$ and $\varepsilon \geq 0.$ Then the set
\begin{align*}
\left\{ \ X_2 > 0 \ \text{ and } \ 0 < \frac{Y_2}{Y_1} < \hat{\omega}_1 \ \right\}
\end{align*}
contains any trajectory of the two summands Ricci soliton ODE that emanates from \eqref{InitialCriticalPoint} and flows into either \eqref{EinsteinLocus} or \eqref{SolitonLocus}.
\label{X2VariablePositive}
\end{proposition}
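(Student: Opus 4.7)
My plan is to establish positive invariance of $U=\{X_2>0,\;0<\omega<\hat{\omega}_1\}$, with $\omega:=Y_2/Y_1$, by checking the direction of the vector field on each stratum of $\partial U$. The face $\omega=0$ is not a real concern because $\omega'=\omega(X_1-X_2)$ is linear in $\omega$, so $\omega>0$ is preserved automatically. On the face $\{X_2=0,\,0<\omega\leq\hat{\omega}_1\}$ the $X_2$-equation specialises to
\[
X_2'\big|_{X_2=0}\;=\;\frac{Y_2^{2}}{d_2}\bigl(A_2-2A_3\omega^{2}\bigr)+\frac{\varepsilon}{2}\mathcal{L}^{2},
\]
which is strictly positive thanks to the algebraic bound $\hat{\omega}_1^{2}<A_2/(2A_3)$ recorded after \eqref{DefinitionDHat} and the fact that $Y_2>0$ on the face. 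Hence the vector field points strictly into $U$ across this face, and this is precisely how the tool $\omega<\hat{\omega}_1$ buys us $X_2>0$.

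The main obstacle is the face $\{X_2>0,\,\omega=\hat{\omega}_1\}$, where invariance requires $\omega'=\hat{\omega}_1(X_1-X_2)\leq 0$. I argue by contradiction: let $s^{*}$ be the first time a trajectory emanating from \eqref{InitialCriticalPoint} reaches $\omega=\hat{\omega}_1$, so $X_1(s^{*})\geq X_2(s^{*})$. Subtracting the two $X_i$-equations yields
\[
(X_1-X_2)'\;=\;(X_1-X_2)\,\alpha+Y_1^{2}\,q(\omega^{2}),
\]
with $\alpha:=\sum_{i}d_iX_i^{2}-\tfrac{\varepsilon}{2}\mathcal{L}^{2}-1$ and $q(\omega^{2}):=A_3(\tfrac{1}{d_1}+\tfrac{2}{d_2})\omega^{4}-\tfrac{A_2}{d_2}\omega^{2}+\tfrac{A_1}{d_1}$, the polynomial vanishing at $\omega=\hat{\omega}_1$. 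The conservation law \eqref{GeneralTwoSummandsConsLaw} rewrites $\alpha=-Y_1^{2}(A_1+A_2\omega^{2}-A_3\omega^{4})+(C+\varepsilon u-\tfrac{n}{2}\varepsilon)\mathcal{L}^{2}$, and an algebraic rearrangement of $q(\hat{\omega}_1^{2})=0$ gives $A_1+A_2\hat{\omega}_1^{2}-A_3\hat{\omega}_1^{4}>0$. Together with $C+\varepsilon u\leq 0$ from Proposition \ref{PotentialFunctionOfExpandingRSAlongCohomOneFlow}, this forces $\alpha(s^{*})<0$ and hence $(X_1-X_2)'(s^{*})\leq 0$.

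The equality case $X_1(s^{*})=X_2(s^{*})$ is ruled out by ODE uniqueness, since the identity $q(\hat{\omega}_1^{2})=0$ is precisely the condition making $\{\omega=\hat{\omega}_1,\,X_1=X_2\}$ a flow-invariant submanifold; reaching it at $s^{*}$ would propagate $\omega\equiv\hat{\omega}_1$ backwards to $s\to-\infty$, contradicting the initial condition $\omega=0$ at the critical point. The hardest step is then ruling out the strict case $X_1(s^{*})>X_2(s^{*})$, where $\omega$ would genuinely cross $\hat{\omega}_1$ upward in spite of $(X_1-X_2)'(s^{*})<0$. My plan here is to exploit the Lyapunov function $\widehat{\mathcal{G}}$ of the excerpt: its first derivative $\omega^{2(d_1-1)}q(\omega^{2})(X_1-X_2)$ vanishes at $s^{*}$, and a direct second-derivative computation gives $\widehat{\mathcal{G}}''(s^{*})=2\hat{\omega}_1^{2d_1}q'(\hat{\omega}_1^{2})(X_1-X_2)^{2}<0$, since $q'(\hat{\omega}_1^{2})<0$ at the smaller root of the upward-opening parabola $\zeta\mapsto q(\zeta)$. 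Tracking $\widehat{\mathcal{G}}(\omega(s))$ along the trajectory with this strict local maximum at $s^{*}$, together with the monotone increase of $\widehat{\mathcal{G}}$ on $(0,\hat{\omega}_1)$, is the route I would pursue to close the argument.
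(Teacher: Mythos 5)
There is a genuine gap, and it originates in a misreading of what $\hat{\omega}_1$ is. In the paper, $\hat{\omega}_1$ is the smaller positive root of $\widehat{\mathcal{G}}$ \emph{itself} (so $\widehat{\mathcal{G}}(\hat{\omega}_1)=0$; compare the displayed root formula with \eqref{DefinitionDHat}), not of the quartic $q(\omega^2)=A_3(\tfrac{1}{d_1}+\tfrac{2}{d_2})\omega^4-\tfrac{A_2}{d_2}\omega^2+\tfrac{A_1}{d_1}$ that appears in $\frac{d}{ds}\widehat{\mathcal{G}}$. These are different quadratics in $\omega^2$ (their constant terms differ by $d_1-1$ versus $d_1+1$ after normalisation), and in the geometric examples one can check $q(\hat{\omega}_1^2)\neq 0$. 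Everything in your treatment of the face $\omega=\hat{\omega}_1$ that rests on $q(\hat{\omega}_1^2)=0$ therefore collapses: the conclusion $(X_1-X_2)'(s^*)\leq 0$ no longer follows, the set $\{\omega=\hat{\omega}_1,\,X_1=X_2\}$ is not flow-invariant (there $(X_1-X_2)'=Y_1^2q(\hat{\omega}_1^2)\neq 0$), and the first derivative of $\widehat{\mathcal{G}}$ does not vanish at $s^*$ in the strict case. A further error: $\widehat{\mathcal{G}}$ is \emph{not} monotone increasing on $(0,\hat{\omega}_1)$ --- since $d_1>1$ gives $\widehat{\mathcal{G}}(0)=0=\widehat{\mathcal{G}}(\hat{\omega}_1)$ with $\widehat{\mathcal{G}}>0$ in between, it must have an interior maximum, so tracking $\widehat{\mathcal{G}}(\omega(s))$ alone can never forbid $\omega$ from reaching $\hat{\omega}_1$.

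More structurally, a pointwise barrier argument on the face $\{\omega=\hat{\omega}_1,\,X_2>0\}$ cannot succeed: the vector field genuinely points outward wherever $X_1>X_2$ on that face, and the proposition is only asserted for trajectories emanating from \eqref{InitialCriticalPoint}, so the proof must use that initial condition globally. This is exactly what the paper's function $\mathcal{K}$ in \eqref{LyapunovForNonTrivialBundles} accomplishes, and it is the ingredient missing from your proposal: $\mathcal{K}$ couples $-\widehat{\mathcal{G}}(\omega)$ with the nonnegative term $\tfrac{1}{2}\omega^{2(d_1-1)}\bigl(\tfrac{X_1-X_2}{Y_1}\bigr)^2$, satisfies
\begin{equation*}
\frac{d}{ds}\mathcal{K}=\omega^{2(d_1-1)}\left(\frac{X_1-X_2}{Y_1}\right)^2\left\{\sum_{i=1}^2 d_iX_i-1-(n-1)X_2\right\}\leq 0 \ \text{ on } \ X_2\geq 0,
\end{equation*}
tends to $0$ as $s\to-\infty$, and is strictly negative for finite $s$; yet $\mathcal{K}\geq 0$ wherever $\omega=\hat{\omega}_1$ because $\widehat{\mathcal{G}}(\hat{\omega}_1)=0$. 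That contradiction is what keeps $\omega<\hat{\omega}_1$ while $X_2>0$, and combined with your (correct, and identical to the paper's) observation that $X_2'>0$ on $\{X_2=0\}$ when $\omega^2<A_2/(2A_3)$, it closes the argument. Your analysis of the faces $\omega=0$ and $X_2=0$ is fine; the face $\omega=\hat{\omega}_1$ needs the Lyapunov function $\mathcal{K}$, not a local crossing analysis.
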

\begin{proof}
The ODE for $X_2$ shows that $X_2$ remains positive if $\frac{Y_2^2}{Y_1^2}  = \omega^2 < \frac{A_2}{2 A_3}.$ Since $\hat{\omega}_1^2 < \frac{A_2}{2 A_3},$ it suffices to show that $\omega < \hat{\omega}_1$ as long as $X_2 > 0.$ Consider the function
\begin{align} 
\mathcal{K} = \frac{1}{2} \omega^{2(d_1-1)} \left( \frac{X_1 - X_2}{Y_1} \right)^2 - \widehat{\mathcal{G}} \left( \omega \right),
\label{LyapunovForNonTrivialBundles}
\end{align}
which was introduced by B\"ohm in the Einstein case \cite{BohmInhomEinstein}.
On the set $X_2 >0$ it is a Lyapunov function since
\begin{align*}
\frac{d}{ds} \mathcal{K} = \omega^{2(d_1-1)} \left( \frac{X_1 - X_2}{Y_1} \right)^2 \left\{ \sum_{i=1}^2 d_i X_i - 1 - (n-1) X_2 \right\}
\end{align*}
and $\sum_{i=1}^2 d_i X_i - 1 \leq 0$ holds in both loci. Notice that $\lim_{s \to - \infty} \mathcal{K} = 0$ and $\mathcal{K} \geq 0$ if $\frac{Y_2}{Y_1} = \omega = \hat{\omega}_1.$ However, $\mathcal{K}$ is non-increasing and strictly decreasing close to \eqref{InitialCriticalPoint}. This completes the proof.
\end{proof}

\begin{corollary}
Suppose that $d_1 >1,$ $\widehat{D} > 0$ and $\varepsilon \geq 0.$  Then along trajectories emanating from \eqref{InitialCriticalPoint} and flowing into \eqref{EinsteinLocus} or \eqref{SolitonLocus} there holds $X_1,$ $X_2 >0$ for all finite times. Moreover, the variables $X_1, X_2$ and $Y_1, Y_2$ and $\omega$ are bounded, and if $\varepsilon >0$ then $\mathcal{L}$ is bounded too. In particular, the rescaled flow exists for all times.
\label{FlowExistsForAllTimes}
\end{corollary}
\begin{proof}
According to lemma \ref{XVariablesPositiveInitially} one has $X_1,$ $X_2 > 0$ initially and $X_1 > 0$ is preserved along the flow. Positivity of $X_2$ follows from proposition \ref{X2VariablePositive} and $X_1,$ $X_2$ remain bounded as $0 \leq d_1 X_1 + d_2 X_2 \leq 1$ due to \eqref{EinsteinLocus} and \eqref{SolitonLocus}.

Then the ODE for $\mathcal{L}$ implies that $\mathcal{L}$ cannot blow up in finite time as $\varepsilon \geq 0.$ By the same argument, this also holds for $Y_1,$ $Y_2.$

Alternatively, it follows from the bound $\frac{Y_2^2}{Y_1^2} < \hat{\omega}_1^2 < \frac{A_2}{2 A_3}$ that $A_2 - A_3 \frac{Y_2^2}{Y_1^2} > \frac{A_2}{2}.$ The Einstein and Ricci soliton loci \eqref{EinsteinLocus} and \eqref{SolitonLocus} are therefore contained in the bounded region $\left\lbrace \ \sum_{i=1}^2 d_i X_i^2 +  A_1 Y_1^2 + \frac{A_2}{2} Y_2^2 + (n-1) \frac{\varepsilon}{2} \mathcal{L}^2 \leq 1 \ \right\rbrace.$ By considering $\omega = \frac{Y_2}{Y_1}$ as an independent variable, one obtains an ODE system with polynomial right hand side. Since $\omega < \hat{\omega}_1$ is bounded, standard ODE theory implies that the flow exists for all times.
\end{proof}

In order to prove that the corresponding metrics are complete, it suffices to show that $t_{\max} = \infty.$ Recall from the coordinate change that
\begin{equation}
t(s) = t(s_0) + \int_{s_0}^s \mathcal{L} (\tau) d \tau.
\label{TimeRescaling}
\end{equation}
Therefore it is necessary to estimate the asymptotic behaviour of $\mathcal{L}.$ This needs to be considered separately for the cases $\varepsilon = 0$ and $\varepsilon >0.$

\begin{proposition}
Suppose that $d_1>1$ and $\widehat{D}>0.$ Then the corresponding steady Ricci soliton and Ricci flat metrics are complete.
\label{CompletenessEpsZeroTwoSummands}
\end{proposition}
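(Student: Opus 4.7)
The plan is to exploit the fact that in the steady setting $\varepsilon=0$, the ODE for $\mathcal{L}$ collapses to the particularly simple form
\begin{equation*}
\mathcal{L}^{'} = \mathcal{L} \sum_{i=1}^{2} d_i X_i^2,
\end{equation*}
which immediately makes $\mathcal{L}$ a non-decreasing function of $s$ along the flow. This monotonicity, combined with the long-time existence from Corollary \ref{FlowExistsForAllTimes} and the time rescaling formula \eqref{TimeRescaling}, is essentially all that is needed to show that $t$ diverges as $s \to \infty$.

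In detail, the first step is to invoke Corollary \ref{FlowExistsForAllTimes} under the hypotheses $d_1 > 1$ and $\widehat{D}>0$ in order to conclude that the trajectory emanating from \eqref{InitialCriticalPoint} is defined on all of $\R$ (both in the Ricci flat case $C=0$ and in the strict steady soliton case $C<0$, both of which lie in the range $\varepsilon = 0$, $C \leq 0$ covered by that corollary). Next, I would fix an arbitrary finite time $s_0 \in \R$ and observe that $\mathcal{L}(s_0) > 0$: indeed, by Lemma \ref{XVariablesPositiveInitially}, $X_1 > 0$ for all finite $s$, so $d_1 X_1^2 + d_2 X_2^2 > 0$ along the flow, and the ODE above together with the asymptotic $\mathcal{L}(s) \to 0$ as $s \to -\infty$ forces $\mathcal{L}$ to be strictly positive for all finite $s$.

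The concluding step combines monotonicity with the integral expression for $t$. Since $\mathcal{L}^{'} \geq 0$ along the flow, one has $\mathcal{L}(s) \geq \mathcal{L}(s_0) > 0$ for every $s \geq s_0$. The rescaling \eqref{TimeRescaling} then yields
\begin{equation*}
t(s) - t(s_0) = \int_{s_0}^{s} \mathcal{L}(\tau)\, d\tau \;\geq\; \mathcal{L}(s_0)\,(s - s_0) \;\longrightarrow\; \infty
\end{equation*}
as $s \to \infty$. Thus $t_{\max} = \infty$, which is precisely the completeness condition for the cohomogeneity one metric coming out of the two summands ODE. The only possible subtlety is the behaviour as $s \to -\infty$, but there the trajectory asymptotes to the critical point \eqref{InitialCriticalPoint}, which encodes exactly the smoothness of the metric at the singular orbit at $t=0$, so there is no issue there. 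The bulk of the work in this proposition was really already done in Corollary \ref{FlowExistsForAllTimes} and Lemma \ref{XVariablesPositiveInitially}; no genuine new obstacle appears, and in particular no finer Lyapunov argument beyond $\mathcal{K}$ is required. The expanding case $\varepsilon > 0$ is genuinely harder because the $\mathcal{L}$-equation picks up a $-\tfrac{\varepsilon}{2}\mathcal{L}^3$ term that destroys monotonicity, but that case is treated separately.
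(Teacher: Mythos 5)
Your proposal is correct and follows essentially the same route as the paper: in the steady case the $\mathcal{L}$-equation reduces to $\mathcal{L}'=\mathcal{L}\sum_i d_i X_i^2\geq 0$, so $\mathcal{L}$ is a Lyapunov function, is bounded away from zero for $s\geq s_0$ once it becomes positive, and the time rescaling \eqref{TimeRescaling} then gives $t\to\infty$. The only cosmetic difference is that you make the appeal to Corollary \ref{FlowExistsForAllTimes} for long-time existence explicit, which the paper leaves implicit.
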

\begin{proof}
A special feature of the case $\varepsilon = 0$ is that $\mathcal{L} = \frac{1}{-\dot{u}+\tr(L)}$ is in fact a {\em Lyapunov} function.  As $\mathcal{L}$ becomes positive initially and is therefore monotonically increasing, it is bounded away from zero for $s \geq s_0$ and any $s_0 \in \R.$ Then the time rescaling \eqref{TimeRescaling} shows that $t \to \infty$ as $s \to \infty,$ i.e. the metrics are complete.
\end{proof}

\begin{remarkroman}
The Ricci flat metrics in proposition \ref{CompletenessEpsZeroTwoSummands} have already been constructed by B\"ohm \cite{BohmNonCompactEinstein} by different means, see remark \ref{RemarkConvergenceConeSolutions}. 
\end{remarkroman}

The cases of expanding Ricci solitons and Einstein metrics with negative scalar curvature correspond to  $\varepsilon >0.$ It will be sufficient to have an upper bound on $\mathcal{L}$ to prove completeness of the metric. 

\begin{lemma} Let $d_1>1,$ $\widehat{D} > 0$ and $\varepsilon > 0.$ Then along trajectories that emanate from \eqref{InitialCriticalPoint} and flow into \eqref{EinsteinLocus} or \eqref{SolitonLocus} there holds 
\begin{equation*}
0 < \frac{\varepsilon}{2} \mathcal{L}^2 \leq \max \left\lbrace \frac{1}{d_1}, \frac{1}{d_2} \right\rbrace.
\end{equation*}
Moreover, in the Einstein case, given $s_0 \in \R$ there holds
\begin{equation*}
\frac{\varepsilon}{2} \mathcal{L}^2(s) \geq \min \left\lbrace \frac{\varepsilon}{2} \mathcal{L}^2(s_0), \frac{1}{n} \right\rbrace 
\end{equation*}
for all $s \geq s_0.$ In particular, $\mathcal{L}(s)$ is bounded away from zero for all $s \geq s_0.$
\label{LemmaLBoundedAwayFromZero}
\end{lemma}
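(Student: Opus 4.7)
The plan is to analyse $E := \frac{\varepsilon}{2}\mathcal{L}^2$ directly. The ODE for $\mathcal{L}$ gives immediately
\begin{equation*}
E' = 2E \left( \sum_{i=1}^2 d_i X_i^2 - E \right),
\end{equation*}
so both halves of the statement will follow from algebraic control of $\sum_i d_i X_i^2$ along the trajectory combined with a trapping (barrier) argument for this scalar ODE.

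For the upper bound, I would first invoke Lemma \ref{XVariablesPositiveInitially} together with Proposition \ref{X2VariablePositive} and Corollary \ref{FlowExistsForAllTimes} (whose hypotheses $d_1>1,$ $\widehat D>0,$ $\varepsilon\geq 0,$ $C\leq 0$ are exactly the ones assumed here) to ensure $X_1,X_2>0$ along the entire flow. Combined with the inequality $d_1X_1+d_2X_2\leq 1$ from Proposition \ref{PotentialFunctionOfExpandingRSAlongCohomOneFlow}, this yields $0<X_i\leq 1/d_i$ and the key algebraic bound
\begin{equation*}
\sum_{i=1}^2 d_i X_i^2 = X_1(d_1 X_1) + X_2(d_2 X_2) \leq \max(X_1,X_2)\,(d_1 X_1+d_2 X_2) \leq \max\!\left(\tfrac{1}{d_1},\tfrac{1}{d_2}\right) =: M.
\end{equation*}
Hence $E' \leq 2E(M-E)$. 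If $E(s_1)>M$ for some $s_1$, let $s_\ast := \sup\{s\leq s_1 : E(s)\leq M\}$; since $\mathcal{L}\to 0$ as $s\to-\infty,$ this is well-defined, $E(s_\ast)=M$ by continuity, and $E>M$ on $(s_\ast,s_1]$. On this interval $E'<0$ strictly, so $E$ decreases, giving $E(s_1)<E(s_\ast)=M$, a contradiction.

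For the Einstein lower bound, the additional information is that the trajectory lies in the Einstein locus \eqref{EinsteinLocus}, so $d_1X_1+d_2X_2=1$ identically. Cauchy--Schwarz then gives
\begin{equation*}
\sum_{i=1}^2 d_i X_i^2 \geq \frac{(d_1 X_1+d_2 X_2)^2}{d_1+d_2} = \frac{1}{n},
\end{equation*}
whence $E'\geq 2E(1/n-E)$. Set $m := \min(E(s_0),1/n)$ and suppose for contradiction $E(s_1)<m$ for some $s_1>s_0$. Let $T:=\inf\{s\geq s_0 : E(s)<m\}$, so $E(T)=m\leq 1/n$ by continuity. If $E(T)<1/n$ then $E'(T)\geq 2E(T)(1/n-E(T))>0$, contradicting the fact that arbitrarily close to $T$ from the right there are points with $E<m$. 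If instead $E(T)=1/n$ (so $m=1/n$), pick $s^\ast>T$ with $E(s^\ast)<1/n$ and let $s^{\ast\ast}:=\sup\{s\in[T,s^\ast] : E(s)=1/n\}$; then $E<1/n$ strictly on $(s^{\ast\ast},s^\ast],$ hence $E'>0$ there, so $E(s^\ast)>E(s^{\ast\ast})=1/n$, again a contradiction.

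I expect the only delicate point to be the tangential-touching case $E(T)=1/n$ in the Einstein lower bound: there the differential inequality gives only $E'(T)\geq 0,$ so a naive one-sided argument is not enough, and one must slide the candidate crossing time forward to the last point where $E=1/n$ before the suspected dip. Everything else is pure ODE trapping against the product structure $E' = 2E(\sum d_iX_i^2 - E),$ with the sign of the bracket controlled by the universal bounds $1/n \leq \sum d_iX_i^2 \leq M$ that the geometry enforces on the relevant locus.
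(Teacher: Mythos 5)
Your proposal is correct and follows essentially the same route as the paper: the identical algebraic bounds $\sum_i d_i X_i^2 \leq \max\{1/d_1,1/d_2\}$ (from $X_i>0$ and $d_1X_1+d_2X_2\leq 1$) and $\sum_i d_i X_i^2 \geq 1/n$ (Cauchy--Schwarz on the Einstein locus), fed into the sign of $\mathcal{L}'=\mathcal{L}(\sum_i d_iX_i^2-\frac{\varepsilon}{2}\mathcal{L}^2)$ via a barrier argument. Your extra care with the tangential case $E(T)=1/n$ is a legitimate tightening of a step the paper states only as ``monotonically increasing whenever it is less than $\frac{1}{n}$,'' but it is not a different method.
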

\begin{proof}
Notice that $0 \leq \sum_{i=1}^2 d_i X_i^2 \leq \max \left\lbrace \frac{1}{d_1}, \frac{1}{d_2} \right\rbrace $ on $X_1, X_2 \geq 0$ and $\sum_{i=1}^2 d_i X_i \leq 1.$ Therefore, if there is an $s_0$ such that $\frac{\varepsilon}{2} \mathcal{L}^2(s_0) > \max \left\lbrace \frac{1}{d_1}, \frac{1}{d_2} \right\rbrace $ then $\mathcal{L}^{'}(s_0) < 0.$ This yields $\mathcal{L}(s) \geq \mathcal{L}(s_0)$ for all $s \leq s_0,$ which contradicts $\lim_{s \to - \infty} \mathcal{L} =0.$

To prove the second statement, suppose that $\frac{\varepsilon}{2} \mathcal{L}^2(s_0) < \frac{1}{n}.$ Since $\sum_{i=1}^2 d_i X_i = 1$ in the Einstein locus, one has $\sum_{i=1}^2 d_i X_i^2 \geq \frac{1}{n}$ and therefore $\mathcal{L}^{'}(s_0) > 0.$ Hence, $\frac{\varepsilon}{2}\mathcal{L}^2$ is monotonically increasing whenever it is less than $\frac{1}{n}.$ 
\end{proof}

\begin{corollary}
Suppose that $d_1>1$ and $\widehat{D}>0.$ Then the corresponding expanding Ricci solitons and Einstein metrics with negative scalar curvature are complete.
\label{CompletenessEpsPosTwoSummands}
\end{corollary}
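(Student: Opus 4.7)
The plan is to exploit the time rescaling
\[
t(s) = t(s_0) + \int_{s_0}^s \mathcal{L}(\tau)\,d\tau
\]
from \eqref{TimeRescaling} and show that the integral on the right-hand side diverges as $s\to\infty$. Under the hypotheses $d_1>1$ and $\widehat{D}>0$, corollary \ref{FlowExistsForAllTimes} already provides global existence in $s$ and bounds on $X_1,X_2,Y_1,Y_2,\mathcal{L}$, so the only remaining task is to secure a lower bound on $\mathcal{L}$ strong enough to make the integral above infinite. Since $\mathcal{L}$ obeys an ODE of the form $\mathcal{L}'=\mathcal{L}\cdot(\,\cdot\,)$, once $\mathcal{L}$ becomes positive (which occurs along the unstable manifold of \eqref{InitialCriticalPoint}) it remains strictly positive on every finite $s$-interval, so one may fix $s_0\in\R$ with $\mathcal{L}(s_0)>0$.

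The key step is a differential inequality for $\mathcal{L}^{-2}$ that uses only the trivial sign condition $\sum_{i=1}^2 d_i X_i^2\ge 0$. Differentiating and applying the ODE for $\mathcal{L}$ gives
\begin{equation*}
\frac{d}{ds}\,\mathcal{L}^{-2} \;=\; -\frac{2\mathcal{L}'}{\mathcal{L}^3} \;=\; \varepsilon \;-\; \frac{2}{\mathcal{L}^2}\sum_{i=1}^2 d_i X_i^2 \;\le\; \varepsilon.
\end{equation*}
Integrating from $s_0$ yields $\mathcal{L}^{-2}(s)\le \mathcal{L}^{-2}(s_0)+\varepsilon(s-s_0)$ for $s\ge s_0$, i.e.
\[
\mathcal{L}(s) \;\ge\; \bigl(\mathcal{L}^{-2}(s_0)+\varepsilon(s-s_0)\bigr)^{-1/2}.
\]
Feeding this back into the time rescaling,
\begin{equation*}
t(s) - t(s_0) \;\ge\; \int_{s_0}^s \frac{d\tau}{\sqrt{\mathcal{L}^{-2}(s_0)+\varepsilon(\tau-s_0)}} \;=\; \frac{2}{\varepsilon}\Bigl(\sqrt{\mathcal{L}^{-2}(s_0)+\varepsilon(s-s_0)}-\mathcal{L}^{-1}(s_0)\Bigr) \;\longrightarrow\; \infty,
\end{equation*}
so $t_{\max}=\infty$ and the metric is complete.

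Note that this argument handles the genuine soliton case ($C<0$) and the Einstein case ($C=0$) uniformly; in the Einstein case the second statement of lemma \ref{LemmaLBoundedAwayFromZero} gives the strictly stronger bound that $\mathcal{L}$ is bounded below by a positive constant for $s\ge s_0$, whereby the divergence of $\int\mathcal{L}\,ds$ is linear rather than of order $\sqrt{s}$, but this refinement is not needed for completeness. There is no real obstacle to overcome: once global existence with bounded variables has been established in corollary \ref{FlowExistsForAllTimes}, the non-negativity of $\sum d_iX_i^2$ is all that is required to close the argument.
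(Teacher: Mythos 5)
Your argument is correct, and it differs from the paper's proof in a way worth noting. The paper argues by contradiction: assuming $t_{\max}<\infty$, i.e. $\int_{s_0}^{\infty}\mathcal{L}\,d\tau<\infty$, it invokes the upper bound $\mathcal{L}\le c$ from lemma \ref{LemmaLBoundedAwayFromZero} to conclude $\int_{s_0}^{\infty}\mathcal{L}^2\,d\tau<\infty$, and then uses the exponential representation $\mathcal{L}(s)=\mathcal{L}(s_0)\exp\bigl(\int_{s_0}^{s}(\sum_i d_iX_i^2-\tfrac{\varepsilon}{2}\mathcal{L}^2)\,d\tau\bigr)$ to deduce that $\mathcal{L}$ is bounded below by a positive constant, contradicting the assumed integrability. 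Your route is direct: the Riccati-type comparison $\frac{d}{ds}\mathcal{L}^{-2}\le\varepsilon$ yields the pointwise lower bound $\mathcal{L}(s)\ge(\mathcal{L}^{-2}(s_0)+\varepsilon(s-s_0))^{-1/2}$, whose integral diverges like $\sqrt{s}$. This is arguably cleaner: it dispenses with the upper bound on $\mathcal{L}$ from lemma \ref{LemmaLBoundedAwayFromZero} entirely (you only need corollary \ref{FlowExistsForAllTimes} for global existence in $s$, positivity of $\mathcal{L}$, and the sign $\sum_i d_iX_i^2\ge 0$), and it produces a quantitative growth rate $t(s)\gtrsim\sqrt{s}$ rather than a bare contradiction. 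Both proofs ultimately rest on the same structural fact, namely that the only negative contribution in the ODE $\mathcal{L}'=\mathcal{L}(\sum_i d_iX_i^2-\tfrac{\varepsilon}{2}\mathcal{L}^2)$ is the self-limiting term $-\tfrac{\varepsilon}{2}\mathcal{L}^3$, which cannot drive $\mathcal{L}$ to zero fast enough to make $\int\mathcal{L}\,ds$ converge.
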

\begin{proof}
Suppose for contradiction that $t_{\max} < \infty.$ Due to \eqref{TimeRescaling} this is equivalent to saying that $\Vert \mathcal{L} \Vert_{L^1(0, \infty)} < \infty.$ However, since $\mathcal{L}$ is bounded due to lemma \ref{LemmaLBoundedAwayFromZero}, this implies $\mathcal{L} \in L^2(0, \infty).$ 
Hence the ODE for $\mathcal{L}$ yields $\mathcal{L}(s) \geq \mathcal{L}(0) \exp \left( -  \frac{\varepsilon}{2} \Vert \mathcal{L} \Vert_{L^2(0, \infty)} \right)  > 0$ and $\mathcal{L}$ is bounded away from zero for $s \geq 0.$ However, this contradicts $\mathcal{L} \in L^1(0, \infty).$
\end{proof}

\begin{remarkroman}
The Einstein metrics of negative scalar curvature in corollary \ref{CompletenessEpsPosTwoSummands} have already been constructed by B\"ohm \cite{BohmNonCompactEinstein} by different means, see remark \ref{RemarkConvergenceConeSolutions}.
\end{remarkroman}

\subsection{Ricci solitons from circle bundles}
\label{SectionSolitonsFromCircleBundles}

The two summands case allows the possibility $d_1 = 1$ and $A_1=0.$ Geometrically this case is realised by manifolds which are foliated by principal circle bundles over a Fano K\"ahler-Einstein manifold $(V,J,g).$ In this setting, examples of {\em K\"ahler} Ricci solitons have been found by Cao-Koiso \cite{CaoSoliton},\cite{KoisoSoliton} and Feldman-Ilmanen-Knopf \cite{FIKSolitons}. {\em Non-K\"ahler} examples have also been constructed independently by Stolarski \cite{StolarskiSteadyRSOnCxLineBundles} and Appleton \cite{AppletonSteadyRS}.

The precise geometric set-up is as follows: Recall that due to a theorem of Kobayashi \cite{KobayashiCompactFanos} any Fano manifold $V$ is simply connected and hence $H^2(V, \mathbb{Z})$ is torsion free. Therefore the first Chern class is $c_1(V, J)= p \rho$ for a positive integer $p$ and an indivisible class $\rho \in H^2(V, \mathbb{Z}).$ Suppose that the Ricci curvature of $(V,g)$ is normalised to be $\Ric = p g.$ If $\pi \colon P \to V$ is the principal circle bundle with Euler class $q \pi^{*} \rho$ for a non-zero integer $q \in \Z \setminus \left\{ 0 \right\}$ and $\theta$ the principal $S^1$-connection with curvature form $\Omega = q \pi^{*} \eta,$ where $\eta$ is the K\"ahler form associated to $g,$ then the Ricci soliton equation on $I \times P$ corresponding to the metric 
\begin{equation*}
dt^2 + f_1^2(t) \theta \otimes \theta + f_2^2(t) \pi^{*} g
\end{equation*}
is described by the two summands system with $d_1=1,$ $d_2=d=\dim_{\R} V$ and $A_1=0,$ $A_2=d_2 p,$ $A_3 = \frac{d_2 q^2}{4}.$ Notice also that the structure of the ODE has changed since $A_1=0.$ If the smoothness conditions \eqref{SmoothnessMetricTwoSummandsGeometricSetUpSection} are satisfied, this construction induces a smooth metric on the associated complex line bundle over $V.$ 

Metrics whose curvature tensor is invariant under the complex structure are considered by Dancer-Wang \cite{DWCohomOneSolitons} in the Ricci soliton case and by Wang-Wang \cite{WWEinsteinS2Bundles} in the Einstein case. This condition is equivalent to saying that
\begin{align*}
\frac{\dot{f}_2^2}{f_2^2} - \frac{q^2}{4} \frac{f_1^2}{f_2^4} = \left( - \dot{u} + \tr(L) + \frac{\dot{f}_1}{f_1} \right) \frac{\dot{f}_2}{f_2} - \frac{p}{f_2^2} + \frac{\varepsilon}{2}.
\end{align*}
As a special case, the {\em K\"ahler} condition reads
\begin{equation*}
\frac{\dot{f}_2}{f_2}= - \frac{q}{2} \frac{f_1}{f_2^2}
\end{equation*}
and it is preserved by the flow. In both cases, the equations can actually be integrated {\em explicitly.} In order to investigate {\em non-K\"ahler} trajectories, the Ricci soliton ODE will be studied qualitatively as before. To adjust the argument in proposition \ref{X2VariablePositive} to the conditions $d_1=1$ and $A_1=0,$ adopt the convention $\frac{A_1}{d_1(d_1-1)} = 1.$ That is, consider 
\begin{align*}
\widehat{\mathcal{G}}( \omega ) = \frac{1}{2} - \frac{p}{2} \omega^2 + \frac{d+2}{16}q^2 \omega^4 \ \text{ and } \
\mathcal{K} = \frac{1}{2} \left( \frac{X_1-X_2}{Y_1} \right) ^2 - \widehat{\mathcal{G}}( \omega )
\end{align*}
and note that $\widehat{\mathcal{G}}$ has two positive roots $0<\hat{\omega}_1 < \hat{\omega}_2$ if $2p^2 > (d+2)q^2.$ Then the proof of proposition \ref{X2VariablePositive} shows 

\begin{proposition} Suppose that $d_1=1,$ $A_1=0$ and $2p^2 > (d+2)q^2 >0.$ If $\varepsilon \geq 0,$ the set
\begin{align*}
\left\lbrace X_2 > 0 \ \text{ and } \ 0 < \frac{Y_2}{Y_1} <\hat{\omega}_1 \right\rbrace
\end{align*}
contains any trajectory of the Ricci soliton ODE that emanates from \eqref{InitialCriticalPoint} and flows into either \eqref{EinsteinLocus} or \eqref{SolitonLocus}. 
\label{X2PositiveCircleBundles}
\end{proposition}

Completeness of the metric can then be established as in proposition \ref{CompletenessEpsZeroTwoSummands} and corollary \ref{CompletenessEpsPosTwoSummands}. Notice in particular that long time existence still follows from corollary \ref{FlowExistsForAllTimes}. As the proof shows, even though $Y_1$ is not controlled by the conservation law \eqref{GeneralTwoSummandsConsLaw} anymore since $A_1 =0,$ it cannot blow up in finite time.

\begin{corollary} Let $d_1=1,$ $A_1 = 0,$ $2p^2 > (d+2)q^2 >0$ and $\varepsilon \geq 0.$ Then any trajectory of the Ricci soliton ODE which emanates from the critical point \eqref{InitialCriticalPoint} and lies in the Einstein locus \eqref{EinsteinLocus} or Ricci soliton locus \eqref{SolitonLocus} corresponds to a complete Einstein or Ricci soliton metric, respectively.
\end{corollary}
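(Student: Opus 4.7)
The plan is to mimic the argument scheme already executed for the two summands system in Corollary \ref{FlowExistsForAllTimes} and Propositions \ref{CompletenessEpsZeroTwoSummands}, \ref{CompletenessEpsPosTwoSummands}, using Proposition \ref{X2PositiveCircleBundles} in place of Proposition \ref{X2VariablePositive}. First I would check that any trajectory emanating from the critical point $(X_1,Y_1,X_2,Y_2,\mathcal{L})=(1,1,0,0,0)$ enters the invariant set $\{X_2>0,\ Y_2/Y_1<\tilde\omega_1\}$ of Proposition \ref{X2PositiveCircleBundles}: near the critical point $Y_2/Y_1\to 0$, and the same sign argument as in Lemma \ref{XVariablesPositiveInitially} (which assumes only $d_1,d_2>0$ and $A_2>0$) shows $X_2>0$ for sufficiently negative $s$. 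Proposition \ref{X2PositiveCircleBundles} then propagates this region along the flow.

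Next I would derive uniform bounds on all variables. The relation $X_1+d_2X_2\leq 1$ from the soliton locus bounds $X_1,X_2$. The assumption $2p^2>(d+2)q^2$, which is equivalent to $\widehat D>0$ in the circle-bundle case, gives $\tilde\omega_1^2<A_2/(2A_3)=2p/q^2$, hence the conservation law \eqref{GeneralTwoSummandsConsLaw} provides positive coefficients for $Y_1^2,Y_2^2$ (and, when $\varepsilon>0$, for $\mathcal{L}^2$), so these variables remain bounded. Passing to the polynomial system in $(X_1,X_2,Y_1,\omega,\mathcal{L})$ with $\omega=Y_2/Y_1$ as in Remark \ref{ODEwithPolynomialRHS}, standard ODE theory then yields long time existence for $s\in\R$.

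It remains to show $t_{\max}=\infty$. In the steady case $\varepsilon=0$ the $\mathcal{L}$-equation reduces to $\mathcal{L}'=\mathcal{L}\sum d_iX_i^2\geq 0$, so $\mathcal{L}$ is a Lyapunov function which becomes strictly positive near the critical point and is therefore bounded away from zero for $s\geq s_0$; this forces $t\to\infty$ via \eqref{TimeRescaling}, exactly as in Proposition \ref{CompletenessEpsZeroTwoSummands}. In the expanding case $\varepsilon>0$ I would reproduce the argument of Corollary \ref{CompletenessEpsPosTwoSummands} verbatim: using the upper bound on $\mathcal{L}$ obtained above (cf. Lemma \ref{LemmaLBoundedAwayFromZero}, whose proof only uses $X_1,X_2\geq 0$ and $d_1X_1+d_2X_2\leq 1$ and hence carries over), the assumption $\int_{s_0}^\infty \mathcal{L}\,d\tau<\infty$ would imply $\int_{s_0}^\infty \mathcal{L}^2\,d\tau<\infty$, and integrating $\mathcal{L}'/\mathcal{L}=\sum d_iX_i^2-\tfrac{\varepsilon}{2}\mathcal{L}^2$ yields a positive lower bound on $\mathcal{L}(s)$, contradicting $\int_{s_0}^\infty \mathcal{L}\,d\tau<\infty$. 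The one potential obstacle is the verification that Lemma \ref{LemmaLBoundedAwayFromZero} and the uniform upper bound on $\mathcal{L}$ still hold when $d_1=1$ and $A_1=0$; however, inspecting those proofs shows that they only invoke $X_1,X_2\geq 0$, $\sum d_iX_i\leq 1$, and the structure of the $\mathcal{L}$-equation, none of which are sensitive to the change $A_1=0$, so the argument goes through unchanged.
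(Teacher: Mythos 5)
Your plan is exactly the paper's: the paper's entire proof of this corollary is the single sentence that completeness ``can then be established as in propositions \ref{CompletenessEpsZeroTwoSummands} and \ref{CompletenessEpsPosTwoSummands},'' with Proposition \ref{X2PositiveCircleBundles} supplying the invariant region in place of Proposition \ref{X2VariablePositive}, and your verifications that the relevant auxiliary lemmas are insensitive to $d_1=1$ are the right things to check. One step of your argument is, however, stated incorrectly: in the circle-bundle case $A_1=0$, so the conservation law \eqref{GeneralTwoSummandsConsLaw} carries a \emph{zero} coefficient on $Y_1^2$ and does \emph{not} bound $Y_1$ (it bounds $X_1,X_2$, $Y_2$ via $A_2-A_3Y_2^2/Y_1^2>A_2/2$, and $\mathcal{L}$ when $\varepsilon>0$, but says nothing about $Y_1$). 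This does not sink the proof: for long-time existence in $s$ you only need to rule out finite-time blow-up, and since $Y_1'/Y_1=\sum d_iX_i^2-\frac{\varepsilon}{2}\mathcal{L}^2-X_1$ is bounded by the quantities you have already controlled, $Y_1$ grows at most exponentially --- this is precisely the argument the paper itself uses in Corollary \ref{FlowExistsForAllTimes} (``the ODE for $\mathcal{L}$ implies that $\mathcal{L}$ cannot blow up in finite time\ldots By the same argument, this also holds for $Y_1$, $Y_2$''), as opposed to the ``alternative'' conservation-law argument given there, which genuinely requires $A_1>0$. With that one substitution the rest of your argument (the Lyapunov property of $\mathcal{L}$ when $\varepsilon=0$, and the upper bound on $\mathcal{L}$ plus the integral contradiction when $\varepsilon>0$) goes through verbatim and none of it uses boundedness of $Y_1$.
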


\begin{remarkroman}
(a) Notice on the contrary that the construction of K\"ahler Ricci solitons due to Feldman-Ilmanen-Knopf \cite{FIKSolitons} requires the condition $-q=p$ in the steady case and $-q > p$ in the expanding case, see also \cite[Theorem 4.20 and Remark 4.21]{DWCohomOneSolitons}. For example, in the case of $\mathbb{C}P^n$ one has $p=\frac{d+2}{2}$ and one thus requires $p>q^2>0$ for the argument of proposition \ref{X2PositiveCircleBundles} to work. In particular, the K\"ahler examples due to Feldman-Ilmanen-Knopf are not covered by the corollary. In the case of $\mathbb{C}P^n,$ these K\"ahler Ricci soliton metrics have also been investigated by Chave-Valent \cite{CVQasuiEinsteinRenormalization}.

(b) Explicit K\"ahler and non-K\"ahler Einstein metrics have already been described by Calabi \cite{CalabiKaehlerMetrics}, B{\'e}rard-Bergery \cite{BerardBergerySurDeNouvellesEintein}, Page-Pope \cite{PagePopeEinsteinMetricsOnCxLineBundles} and Wang-Wang \cite{WWEinsteinS2Bundles}.

(c) If $q > p,$ Appleton \cite{AppletonSteadyRS} proves that there cannot exist a complete Ricci flat metric on the associated complex line bundle. In particular, the corresponding trajectory cannot satisfy the bound $ \omega < \frac{4p}{(d+2)q^2}$ for all times.
\label{RemarkCircleBundleSolutions}
\end{remarkroman}

Observe that the initial stationary point \eqref{InitialCriticalPoint} is not hyperbolic in the case $d_1=1.$ Therefore a center manifold exists and the analysis before proposition \ref{NumberOfParameterFamilies} does not carry over. However, the work of Buzano \cite{BuzanoInitialValueSolitons} and Eschenburg-Wang \cite{EWInitialValueEinstein} still applies and the existence of Ricci soliton trajectories can be deduced, see also \cite{StolarskiSteadyRSOnCxLineBundles} or \cite{AppletonSteadyRS} for different arguments. Thus Theorem \ref{MainTheoremRSOnLineBundles} follows from the following result:

\begin{theorem} 
Suppose that $d_1=1,$ $A_1=0$ and $2p^2 > (d+2)q^2 >0.$ 

If $\varepsilon = 0$ there exists a $1$-parameter family and if $\varepsilon > 0$ a $2$-parameter family of trajectories lying in both the unstable manifold of \eqref{InitialCriticalPoint} and the Ricci soliton locus \eqref{SolitonLocus}. In particular, these give rise to complete Ricci soliton metrics on the total spaces of the corresponding complex line bundles over Fano K\"ahler-Einstein manifolds. 

Similarly, there exist a (up to homotheties) unique complete Ricci flat metric and a $1$-parameter family of complete Einstein metrics with negative scalar curvature on these spaces. \label{SolitonsOnCircleBundles}
\end{theorem}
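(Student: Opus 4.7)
The plan is to follow exactly the template from Proposition \ref{NumberOfParameterFamilies} together with the circle-bundle analogues established in the previous subsection, adapted to handle the non-hyperbolicity of the critical point \eqref{InitialCriticalPoint} when $d_1=1$. Since $A_1=0$ in this case, the linearisation at \eqref{InitialCriticalPoint} picks up a two-dimensional centre subspace corresponding to the double eigenvalue $\frac{1}{d_1}-1=0$, while the eigenvalue $\frac{2}{d_1}=2$ and the eigenvalues $\frac{1}{d_1}=1$ (appearing twice, in the $Y_2$ and $\mathcal{L}$ directions) still produce a well-defined unstable manifold of dimension $3$ in the full system, and of dimension $2$ in the reduced system obtained for $\varepsilon=0$ by dropping the decoupled variable $\mathcal{L}$.

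First I would reproduce the counting argument of the paragraph preceding Proposition \ref{NumberOfParameterFamilies}. For $\varepsilon\neq 0$, the three-dimensional unstable manifold meets the codimension-two Einstein locus \eqref{EinsteinLocus} in a $1$-parameter family and meets the open strict Ricci soliton locus \eqref{SolitonLocus} in a $2$-parameter family. For $\varepsilon=0$, the reduced unstable manifold is two-dimensional, and its intersection with \eqref{EinsteinLocus} is zero-dimensional (one Ricci-flat trajectory up to scaling), while its intersection with \eqref{SolitonLocus} is a $1$-parameter family. The presence of the centre directions needs to be addressed: the crucial point is that trajectories corresponding to $C^2$ metrics smooth at the singular orbit are forced to leave \eqref{InitialCriticalPoint} along the unstable manifold, a fact that follows from the initial value problem theory (theorem \ref{MainRegularityTheorem}, extending \cite{BuzanoInitialValueSolitons} and \cite{EWInitialValueEinstein}), so the centre manifold does not contribute extra geometric trajectories.

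Next I would verify that each such trajectory actually satisfies the hypotheses of Proposition \ref{X2PositiveCircleBundles}. Since $\frac{Y_2}{Y_1}\to 0$ as $s\to -\infty$ and lemma \ref{XVariablesPositiveInitially} gives $X_2>0$ for sufficiently negative $s$, any trajectory emanating from \eqref{InitialCriticalPoint} enters the set $\{X_2>0,\ 0<\frac{Y_2}{Y_1}<\tilde{\omega}_1\}$. Under the assumption $2p^2>(d+2)q^2>0$, Proposition \ref{X2PositiveCircleBundles} makes this set forward-invariant, so $X_2>0$ persists along the flow and $\frac{Y_2}{Y_1}$ stays bounded by $\tilde{\omega}_1$. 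Combined with $d_1X_1+d_2X_2\leq 1$ and the conservation law \eqref{GeneralTwoSummandsConsLaw} (using $A_2-A_3\frac{Y_2^2}{Y_1^2}>\frac{A_2}{2}$ as in Corollary \ref{FlowExistsForAllTimes}), all of $X_i,Y_i$ and, for $\varepsilon>0$, also $\mathcal{L}$ remain bounded, so the flow exists for all $s\in\mathbb{R}$.

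Finally, completeness of the resulting metric is now immediate from the arguments of Propositions \ref{CompletenessEpsZeroTwoSummands} and \ref{CompletenessEpsPosTwoSummands}, which are insensitive to whether $d_1=1$ or $d_1>1$: in the steady case $\mathcal{L}'=\mathcal{L}\sum_i d_iX_i^2\geq 0$ makes $\mathcal{L}$ a non-decreasing Lyapunov function bounded away from zero once positive, forcing $t(s)=\int\mathcal{L}\,d\tau\to\infty$; in the expanding case the upper bound on $\mathcal{L}$ from Lemma \ref{LemmaLBoundedAwayFromZero} together with the exponential lower bound derived in Corollary \ref{CompletenessEpsPosTwoSummands} again gives $t\to\infty$. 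I expect the main obstacle to be the rigorous justification in paragraph two that the centre manifold produces no additional families violating the parameter count; once this is in place, Proposition \ref{X2PositiveCircleBundles} and the completeness arguments transfer from the $d_1>1$ case essentially verbatim.
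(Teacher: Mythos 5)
Your proposal is correct and follows essentially the same route as the paper, which itself disposes of the theorem by remarking that the counting analysis before Proposition \ref{NumberOfParameterFamilies} still applies despite the centre directions at \eqref{InitialCriticalPoint}, and then invokes Proposition \ref{X2PositiveCircleBundles} together with the completeness arguments of Propositions \ref{CompletenessEpsZeroTwoSummands} and \ref{CompletenessEpsPosTwoSummands}; your explicit appeal to the initial value problem theory to rule out extra trajectories from the centre manifold is exactly the justification the paper leaves implicit. One small inaccuracy: since $A_1=0$ when $d_1=1$, the conservation law \eqref{GeneralTwoSummandsConsLaw} no longer bounds $Y_1$, so long-time existence should be drawn from the first argument in Corollary \ref{FlowExistsForAllTimes} (the logarithmic derivative of $Y_1$ is bounded, hence no finite-time blow-up) rather than from a boundedness claim for all the $Y_i$ --- which is in any case all that the completeness proofs require.
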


It follows from the work of Appleton \cite{AppletonSteadyRS} that the Ricci solitons in theorem \ref{SolitonsOnCircleBundles} are asymptotically conical. Furthermore, recall from remark \ref{RemarkCircleBundleSolutions} that the existence of Einstein metrics is well known. 

\subsection{Asymptotics}
\label{SectionTwoSummandsAsymptotics}

This section discusses the asymptotic behaviour of the metrics which were constructed in section \ref{CompletenessTwoSummands}. In particular it will be shown that the steady Ricci solitons are asymptotically paraboloid and the expanding Ricci solitons are asymptotically conical. 

\subsubsection{Cone solutions}
\label{SectionConeSolutions} The concrete asymptotics of the metrics depend on the following well known construction, cf. \cite{BohmNonCompactEinstein} or \cite{DHWShrinkingSolitons}.

\begin{proposition}
Let $(P, g_E)$ be a homogeneous space with $Ric = (n-1) g_E.$ Then the metrics 
\begin{align*}
dt^2 & + \sin^2(t) g_E  \ \text{ for } \ t \in (0, \pi), \\ 
dt^2 +  t^2  g_E & \  \text{ and } \ dt^2 + \sinh^2(t) g_E   \ \text{ for } \ t > 0
\end{align*}
define cohomogeneity one Einstein metrics on $(0, \frac{\pi}{2}) \times P$ and $(0, \infty) \times P$ with Einstein constant $- \frac{\varepsilon}{2} = n, 0, -n,$ respectively. Any of these solutions will be called a {\em cone solution.}

Furthermore, the Ricci flat metrics together with the soliton potential $- \dot{u}(t) = \frac{\varepsilon}{2}t$ induce a shrinking or expanding Ricci soliton on $(0, \infty) \times P$ depending on whether $\varepsilon < 0$ or $\varepsilon >0.$ If $\varepsilon < 0$ these solutions are called {\em conical Gaussians.}
\label{ExplicitConeSolutionsProposition}
\end{proposition}

The above metrics have conical singularities at the singular orbits unless each singular orbit consists of a point. In this case the metrics correspond to the standard metrics on $S^{n+1},$ $\R^{n+1}$ and $\mathbb{H}^{n+1},$ respectively. To obtain concrete formulae in the two summands case, the following definitions are required.

\begin{definition}
Positive solutions $(c_1, c_2)$ to the equations
\begin{align}
(n-1) d_1 = \frac{A_1}{c_1^2} + A_3 \frac{c_1^2}{c_2^4}  \ \text{ and } \
(n-1) d_2 = \frac{A_2}{c_2^2} - 2 A_3 \frac{c_1^2}{c_2^4} \label{DefConeSolutionOne}
\end{align}
are called {\em cone solutions}.
\label{DefinitionConeSolutions}
\end{definition}

\begin{remarkroman} If $A_3 >0,$ the cone solutions take the explicit form
\begin{align*}
c_1^2 & = \frac{1}{2d_1+d_2} \left( \frac{A_2^2 d_1 + 4 A_1 A_3 (2d_1+d_2)}{2 A_3 (n-1)(2d_1+d_2)} \mp \sqrt{D} \right), \\
c_2^2 & = \frac{1}{2d_1+d_2} \left( A_2 n \pm 2 A_3 (2d_1+d_2) \sqrt{D} \right),
\end{align*}
where the discriminant $D$ is given by
\begin{equation}
D = \left( \frac{A_2}{2 A_3} \frac{d_1}{2d_1+d_2} \right)^2  - \frac{A_1}{A_3} \frac{d_2}{2d_1+d_2}.
\label{DiskriminatConeSolutions}
\end{equation}
Inserting the geometric definitions of the constants $A_1,$ $A_2,$ $A_3$ into \eqref{DiskriminatConeSolutions}, one obtains
\begin{align*}
D \geq 0 \ \text{ if and only if } \ \frac{(\Ric^{G/H})^2}{4 ||A||^2} \geq (2d_1+d_2) \frac{d_1-1}{d_1}.
\end{align*} 
Suppose that there are two real cone solutions. For a cone solution $(c_1, c_2),$ set $\omega=\frac{c_1}{c_2}.$ Then the ordering $\omega_1 < \omega_2$ defines the {\em first} and {\em second} cone solution. 

In particular, if $\widehat{D}>0,$ cf. \eqref{DefinitionDHat}, there exist two cone solutions and it is easy to check that $\omega_1 < \hat{\omega}_1 < \omega_2 < \hat{\omega}_2$ in this case. This has also been observed by B\"ohm \cite{BohmInhomEinstein}.
\label{ExplicitFormulaConeSolutions}
\end{remarkroman}

Let $D \geq 0$ and let $(c_1, c_2)$ be a cone solution as in definition \ref{DefinitionConeSolutions}. With the normalisation of the Einstein constant $- \frac{\varepsilon}{2} \in \left\{ -n, 0, n \right\},$ the two summands Einstein cone solutions of proposition \ref{ExplicitConeSolutionsProposition} take the form 
\begin{align}
f_i(t)= c_i \sin(t) \ \text{ for } \ t \in (0, \pi)
\label{ExplicitConeSolutionScalPos}
\end{align}
in the case of positive scalar curvature and 
\begin{align}
f_i(t)= c_i t \ \text{ and } \ f_i(t)= c_i \sinh(t) \ \text{ for } \ t > 0
\label{ExplicitConeSolutionScalNonPos}
\end{align}
in the Ricci flat and negative scalar curvature case, respectively. Any cone solution is called {\em first} cone solution if that is the case for the pair $(c_1,c_2)$ as remark \ref{ExplicitFormulaConeSolutions}.

\begin{example} \normalfont Recall the examples of group diagrams in table \ref{HopfFibrationsTable}, which induce the Hopf fibrations. In the $\mathbb{H}P^{m+1}$-example the cone solutions are 
\begin{align*}
c_1^2 & = \frac{9+14m+4m^2}{(1+2m)(3+2m)^2} \ \text{ and } \ c_2^2 = \frac{9+14m+4m^2}{(1+2m)(3+2m)}, \\
c_1^2 & = c_2^2 = 1,
\end{align*}
in the $F^{m+1}$-example they are given by
\begin{align*}
c_1^2 & = \frac{(1+m)^2+m}{(1+m)^2(1+4m)} \ \text{ and } \ c_2^2 = 4 \frac{(1+m)^2 + m}{(2m+1)^2+m}, \\
c_1^2 & = \frac{1+m}{1+4m} \ \text{ and } \ c_2^2 = 4 c_1^2,
\end{align*}
and in the $CaP^2$-example they are $c_1^2 = \frac{57}{121},$ $c_2^2 = \frac{19}{11}$ and $c_1^2 = c_2^2 = 1.$ In all cases, the first pair also describes the first cone solution. 
\label{ExamplesConeSolutionsFromHopfFibrations}
\end{example}

The following elementary but useful characterisation of $\omega_1$ and $\omega_2$ is immediate from definition \ref{DefinitionConeSolutions} and remark \ref{ExplicitFormulaConeSolutions}.

\begin{proposition}
Let $D > 0.$ Then the two positive roots of the function 
\begin{equation}
f( \omega ) = \frac{A_1}{d_1} - \frac{A_2}{d_2} \omega^2 + A_3 \left( \frac{1}{d_1} + \frac{2}{d_2} \right)  \omega^4
\label{FunctionDeterminingSecondDerivativeOmega}
\end{equation}
are the ratios $\omega_1,$ $\omega_2$ of the first and second cone solution, respectively, i.e.
\begin{align*}
\omega_{1}^2 = \frac{A_2}{2A_3} \frac{d_1}{2d_1+d_2} - \sqrt{ D } \ \ \text{and} \ \ \omega_{2}^2 = \frac{A_2}{2A_3} \frac{d_1}{2d_1+d_2} + \sqrt{ D }.
\end{align*}
In particular, it follows that $\omega_1^2 < \frac{A_2}{4A_3}$ and $\omega_2^2 < \frac{A_2}{2A_3}.$
\label{CharacterisationOfConeSolutionRatio}
\end{proposition}

\subsubsection{Steady Ricci solitons}
\label{SteadyRSAymptoticsSection}

The rotationally symmetric Bryant soliton on $\R^n,$ $n \geq 3,$ is asymptotically paraboloid and therefore non-collapsed. It will be shown that this is also the case for the non-trivial steady Ricci solitons constructed in section \ref{CompletenessTwoSummands}. 

\vspace{2mm}

Recall from proposition \ref{CompleteSteadyRSAsymptotics} that on a complete, non-trivial cohomogeneity one steady Ricci soliton there holds $-\dot{u}(t) \to \sqrt{-C}$ as $t \to \infty$ and $0 < \tr(L) \leq \frac{n}{t}$ for $t >0.$ Therefore, if the shape operator remains positive definite, it follows that $\frac{\dot{f}_i}{f_i} \to 0$ as $t \to \infty.$ According to corollary \ref{FlowExistsForAllTimes}, this automatically holds in the two summands case if $d_1 > 1$ and $\widehat{D}>0.$

In order to obtain the concrete asymptotics of the metric if $A_3 >0$, an understanding of the long time behaviour of $\omega$ is essential:

\begin{proposition}
Let $d_1 >1,$ $\widehat{D}>0$ and $\varepsilon = 0.$ Then along trajectories of non-trivial steady Ricci solitons the limit $\omega_{\infty} = \lim_{t \to \infty} \omega(t)$ exists and $\lim_{t \to \infty} \dot{\omega}(t) = 0.$ 
\label{OmegaConverges}
\end{proposition}
\begin{proof}
Let $v(t) = \sqrt{\det g_t} = f_1^{d_1}(t) f_2^{d_2}(t)$ denote the relative volume of the principal orbit and consider the variables $\frac{v^{1/n}}{f_i}$ and $\frac{v^{2/n} \dot{f}_i}{f_i}$ for $i=1,2.$ Observe that $\frac{v^{1/n}}{f_1} = \frac{1}{\omega^{d_2/n}}$ and $\frac{v^{1/n}}{f_2} = \omega^{d_1/n}.$ 

Therefore, the B\"ohm functional has the lower bound
\begin{align*}
\mathscr{F}_0 = v ^{\frac{2}{n}} \left( \tr(r_t) + \tr(( L^{(0)})^2 )\right) 
\geq  v ^{\frac{2}{n}} \tr(r_t) 
= \frac{A_1}{\omega^{2 d_2/n}} + A_2 \omega^{2 d_1/n} - A_3 w^{2(2 d_1 + d_2)/n}.
\end{align*}
Since $\mathscr{F}_0$ is non-increasing, $v ^{\frac{2}{n}} \tr(r_t)$ is bounded from above for $t \geq t_0 > 0$ and hence $\omega$ is bounded away from zero for these $t.$ As $\omega < \hat{\omega}_1,$ the variables $\frac{v^{1/n}}{f_i}$ are hence bounded for $t \geq t_0.$ 

Furthermore, the variables $\frac{v^{2/n} \dot{f}_i}{f_i}$ satisfy the ODE system
\begin{align*}
\frac{d}{dt} \frac{v^{2/n} \dot{f}_1}{f_1} & = - ( - \dot{u} + \frac{n-2}{n} \tr(L) ) \frac{v^{2/n} \dot{f}_1}{f_1} 
+ \frac{A_1}{d_1} \left( \frac{v^{1/n}}{f_1} \right)^2 + \frac{A_3}{d_1}  \omega^2 \left( \frac{v^{1/n}}{f_2} \right)^2, \\
\frac{d}{dt} \frac{v^{2/n} \dot{f}_2}{f_2} & = - ( - \dot{u} + \frac{n-2}{n} \tr(L) ) \frac{v^{2/n} \dot{f}_2}{f_2} 
+ \frac{A_2}{d_2} \left( \frac{v^{1/n}}{f_2} \right)^2 -  \frac{2 A_3}{d_2}  \omega^2 \left( \frac{v^{1/n}}{f_2} \right)^2.
\end{align*}
Due to the known asymptotics, the coefficient of $\frac{v^{2/n} \dot{f}_i}{f_i}$ tends to $- \sqrt{-C}$ and the remaining polynomial terms are bounded. Hence, by comparison, the variables $\frac{v^{2/n} \dot{f}_i}{f_i}$ remain bounded. Therefore, one can pass to the $\omega$-limit set $\Omega.$ Due to its monotonicity, $\mathscr{F}_0$ converges. Its derivative \eqref{DerivativeOfBohmFunctional} has to vanish on $\Omega$ and therefore the limiting value $\mathscr{F}_0 = (v ^{\frac{2}{n}} \tr(r))_{\infty}$ can be expressed in terms of $\omega$ as above. In particular, $\omega$ converges. 

The asymptotics of $\dot{\omega}$ simply follow from the ODE $\dot{\omega} = \omega \left\lbrace \frac{\dot{f}_1}{f_1} - \frac{\dot{f}_2}{f_2} \right\rbrace$ and the fact that $\frac{\dot{f}_i}{f_i} \to 0$ as $t \to \infty.$ 
\end{proof}

\begin{remarkroman}
It is also possible to derive an integral formula for $\dot{\omega}.$ Indeed, it is straightforward to check that
\begin{align*}
\frac{d}{dt} \left\lbrace \dot{\omega} e^{-u} f_1^{d_1-1} f_2^{d_2+1} \right\rbrace = f(\omega) e^{-u} f_1^{d_1-2} f_2^{d_2} ,
\end{align*}
where $f(\omega)$ is defined in \eqref{FunctionDeterminingSecondDerivativeOmega}. If $d_1 > 1,$ it follows that 
\begin{align*}
\dot{\omega}(t) = \frac{e^u(t)}{f_1^{d_1-1}(t) f_2^{d_2+1}(t)} \cdot \int_{0}^{t} f( \omega(s) ) e^{-u(s)} f_1^{d_1-2}(s) f_2^{d_2}(s) ds.
\end{align*}
Since $f_1, f_2$ are monotonic and $e^{u(t)} \int_{0}^t e^{-u(s)} ds \to \frac{1}{\sqrt{-C}}$ as $t \to \infty$ due to L'H\^{o}pital's rule, one has the bound $\dot{\omega}(t) \leq  \overline{C} \cdot \frac{1}{f_1(t) f_2(t)}$ for some constant $\overline{C} > 0.$
\label{IntegralFormulaOmegaDot}
\end{remarkroman}

Now the asymptotics of the metric can be deduced:

\begin{proposition}
Let $d_1>1,$ $A_1>0, \widehat{D}>0$ and suppose that $(d_1+1) \ddot{u}(0) = C < 0.$ Then the corresponding two summands steady Ricci soliton metrics satisfy
\begin{align*}
- \dot{u}(t) \to \sqrt{-C} \ \text{ and } \ \frac{f_i^2(t)}{t} \to \frac{2}{\sqrt{-C}}(n-1) c_i^2
\end{align*}
as $t \to \infty,$ where $(c_1, c_2)$ denotes the first cone solution. In particular, $\omega \to \omega_1$ as $t \to \infty.$
\label{SummarisingAsymptoticsSteadyRicciSoliton}
\end{proposition}
\begin{proof}
Recall that $- \dot{u}(t) \to \sqrt{-C}$ as $t \to \infty$ due to proposition \ref{CompleteSteadyRSAsymptotics}. Notice that $f_1,$ $f_2$ satisfy
\begin{align*}
\ddot{f}_1 & = - ( - \dot{u} - d_2 \frac{\dot{\omega}}{\omega} ) \dot{f}_1 - (n-1) \frac{\dot{f}_1^2}{f_1} + \frac{A_1+A_3 \omega^4}{d_1 f_1}, \\ 
\ddot{f}_2 & = - ( - \dot{u} + d_1 \frac{\dot{\omega}}{\omega} ) \dot{f}_2 - (n-1) \frac{\dot{f}_2^2}{f_2} + \frac{A_2-2 A_3 \omega^2}{d_2 f_2}.
\end{align*}

As $\omega < \hat{\omega}_1 < \sqrt{\frac{A_2}{2 A_3}},$ $A_1 > 0$ and $\omega$ converges, both $f_1, f_2$ satisfy a differential equation of the form
\begin{align*}
\ddot{f} = - a_1 \dot{f} - (n-1) \frac{\dot{f}^2}{f} + \frac{a_2}{2 f},
\end{align*}
where $a_i \colon [0,\infty) \to \R$ are smooth functions with $\lim_{t \to \infty} a_i(t) = a_i^{\ast} >0.$ Set $A= \min \lbrace a_1^{\ast}, a_2^{\ast}\rbrace.$ It is shown in \cite[Lemma 6.2]{AppletonSteadyRS} that for every $\varepsilon \in (0, A)$ and every solution $f: [0, \infty) \to \R$ with $f(0), \dot{f}(0)>0$ there exists $t_0>0$ such that 
\begin{align*}
f(t_0)^2 + \gamma_{-} \left(1+ \varepsilon \right)^{-1} \left( t-t_0 \right) \leq f^2(t) \leq f(t_0)^2 + \gamma_{+} \left(t-t_0 \right)
\end{align*}
for all $t >t_0$, where $\gamma_{\pm} = \frac{a_2^{\ast} \pm \varepsilon}{a_1^{\ast} \mp \varepsilon}$.

It follows that $\frac{\gamma_{1,{-}}}{\gamma_{2,{+}}} \leq \omega_{\infty}^2 \leq \frac{\gamma_{1,{+}}}{\gamma_{2,{-}}}$ for every sufficiently small $\varepsilon >0.$ In the limit as $\varepsilon \to 0$ one obtains equality and thus
\begin{align*}
\omega_{\infty}^2 =\frac{d_2}{d_1} \frac{A_1+A_3 \omega_{\infty}^4}{A_2 - 2 A_3 \omega_{\infty}^2}.
\end{align*}
In particular, $0 < \omega_{\infty} \leq \hat{\omega}_1$ is a root of $f(\omega)$ and due the characterisation \ref{CharacterisationOfConeSolutionRatio} of the cone solutions, it follows that $\omega_{\infty} = \omega_1$ is the ratio of the first cone solution.

The asymptotic behaviour of $f_1,$ $f_2$ now follows with the formulae in definition \ref{DefinitionConeSolutions}. 
\end{proof}

\begin{remarkroman}
For $d_1 >1,$ $\widehat{D}>0$ and $\varepsilon = 0,$ the asymptotics of the rescaled Ricci soliton ODE of section \ref{CompletenessTwoSummands} are 
\begin{align*}
X_1, X_2 \to 0 \ \text{ and } \ Y_1, Y_2 \to 0 \ \text{ and } \ \mathcal{L} \to \frac{1}{\sqrt{-C}}
\end{align*}
as $s \to \infty.$
\label{VariablesBoundedSteadySolitonTwoSummandsCase}
\end{remarkroman}

\subsubsection{Expanding Ricci solitons}
\label{ExpandingRSAymptoticsSection}

It will be shown that the expanding Ricci solitons are asymptotically conical at infinity and the soliton potential grows quadratically at infinity. 

Recall from \eqref{GeneralAsymptoticsExpandingRS} that on a complete, non-trivial cohomogeneity one expanding Ricci soliton $-\dot{u}$ is asymptotically linear and the mean curvature of the principal orbit is bounded. Furthermore, corollary \ref{FlowExistsForAllTimes} implies that the shape operator is positive definite in the two summands case. The definition of the rescaled variables in \eqref{RescaledTwoSummandsVariables} thus implies: 

\begin{proposition}
Suppose that $d_1 >1$ and $\widehat{D} > 0$ and consider the flow of the Ricci soliton ODE in the phase space of expanding Ricci solitons. Then 
\begin{align*}
X_1, X_2 \to 0 \ \text{ and } \ Y_1, Y_2 \to 0 \ \text{ and } \ \mathcal{L} \to 0
\end{align*}
as $s \to \infty.$ 
\end{proposition}

A modification of the discussion in \cite{DWExpandingSolitons} can now be used to deduce the claimed asymptotically conical geometry at infinity:

\begin{proposition}
Suppose that $d_1 >1$ and $\widehat{D} > 0.$ Then along trajectories corresponding to non-trivial expanding Ricci soliton metrics the soliton potential and shape operator satisfy
\begin{align*}
\frac{- \dot{u}(t)}{t} \to \frac{\varepsilon}{2} \ \text{ and } \ t \cdot L_t \to \mathbb{I}_n
\end{align*}
as $t \to \infty.$
\label{AsymptoticsOfExpandingTwoSummandsRS}
\end{proposition}
\begin{proof}
Consider the ODE system
\begin{align*}
\frac{d}{ds} \frac{X_1}{\mathcal{L}^2} & = \left(- \sum_{i=1}^2 d_i X_i^2 -1 \right) \frac{X_1}{\mathcal{L}^2} + \frac{\varepsilon}{2} \left( 1 + X_1 \right) + \frac{A_1}{d_1} \left( \frac{Y_1}{\mathcal{L}} \right)^2 + \frac{A_3}{d_1} \left( \frac{Y_2}{\mathcal{L}} \right)^2 \left( \frac{Y_2}{Y_1} \right)^2, \\
\frac{d}{ds} \frac{X_2}{\mathcal{L}^2} & = \left(- \sum_{i=1}^2 d_i X_i^2 -1 \right) \frac{X_2}{\mathcal{L}^2} + \frac{\varepsilon}{2} \left( 1 + X_2 \right) + \frac{A_2}{d_2} \left( \frac{Y_2}{\mathcal{L}} \right)^2 - \frac{2 A_3}{d_2} \left( \frac{Y_2}{\mathcal{L}} \right)^2 \left( \frac{Y_2}{Y_1} \right)^2
\end{align*}
and notice that $\frac{d}{ds} \frac{Y_i}{\mathcal{L}} = - \frac{Y_i}{\mathcal{L}} X_i$ implies that both limits $\hat{y}_i = \lim_{s \to \infty} \frac{Y_i}{\mathcal{L}} \in [0, \infty)$ exist.
 
If $\hat{y}_1 = 0$ then, as $\omega = \frac{Y_2}{Y_1}$ remains bounded, one necessarily also has $\hat{y}_2 = 0.$ In this case one can proceed as in \cite[Lemma 3.15]{DWExpandingSolitons} to show that $\frac{X_i}{\mathcal{L}^2} \to \frac{\varepsilon}{2}$ as $s \to \infty$ because the extra terms involving $A_3$ tend to zero. Similarly, integrating the ODE for $\mathcal{L}$ implies $\mathcal{L}^2 \cdot s \to \frac{1}{\varepsilon}$ as $s \to \infty.$ Since $dt = \mathcal{L} ds,$ this yields $s \sim \frac{\varepsilon}{4} t^2$ and hence $\mathcal{L} \cdot t \to \frac{2}{\varepsilon}$ as $t \to \infty$. The claim then follows from the definition of the coordinate change in \eqref{RescaledTwoSummandsVariables}.

It remains to rule out that possibly $\hat{y}_1 > 0.$ In this case the existence of $\omega_{\infty} = \lim_{s \to \infty}  \frac{Y_2}{Y_1}$ is immediate. Hence the ODEs imply 
\begin{align*}
\frac{X_1}{\mathcal{L}^2} \to \frac{\varepsilon}{2} + \frac{1}{d_1} \left( A_1 \hat{y}_1^2+A_3 \hat{y}_2^2 \omega_{\infty}^2  \right) 
\ \text{ and } \ 
\frac{X_2}{\mathcal{L}^2} \to \frac{\varepsilon}{2} + \frac{1}{d_2} \left( A_2 \hat{y}_2^2 - 2 A_3 \hat{y}_2^2 \omega_{\infty}^2  \right)
\end{align*}
as $s \to \infty$ and both limits are positive as $\varepsilon > 0$ and $\omega_{\infty}^2 < \frac{A_2}{2 A_3}.$ Set $\Lambda_i = \lim_{s \to \infty} \frac{X_i}{\mathcal{L}^2} > 0.$ It follows that
\begin{align*}
\frac{Y_i^{'}}{\mathcal{L}^{'}} = \frac{Y_i \left(\sum_{i=1}^{2} d_i X_i^2 - \frac{\varepsilon}{2} \mathcal{L}^2 - X_i\right)}{\mathcal{L}\left(\sum_{i=1}^{2} d_i X_i^2 - \frac{\varepsilon}{2} \mathcal{L}^2 \right)} \to \hat{y}_i \cdot \frac{\varepsilon + 2 \Lambda_i}{\varepsilon}
\end{align*}
as $s \to \infty,$ but if $\hat{y}_1 > 0$ L'H\^{o}pital's rule implies $\Lambda_1=0$ and thus a contradiction. 
\end{proof}

\subsubsection{Ricci flat metrics}
\label{SubSectionRFMetrics}

The rescaled coordinates of section \ref{CompletenessTwoSummands} are particularly suited to analyse the Ricci flat trajectories. The induced Ricci flat metric is asymptotically conical and in fact is asymptotic to the first cone solution $f_i(t) = c_i t.$ This also follows from B\"ohm's \cite{BohmNonCompactEinstein} original construction, see remark \ref{RemarkConvergenceConeSolutions}. 
 
\begin{proposition}
Let $d_1 >1$ and $\widehat{D}>0.$ Along trajectories of the Ricci flat system $X_i \to \frac{1}{n}$ and $Y_i \to \frac{1}{n c_i}$ as $s \to \infty,$ where $(c_1,c_2)$ denotes the first cone solution.
\label{VariablesBoundedRicciFlatTwoSummandsCase}
\end{proposition}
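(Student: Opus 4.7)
The plan is to study the $\omega$-limit set of the trajectory, projected to the bounded variables $(X_1, X_2, Y_1, Y_2)$, using the Lyapunov function $\mathcal{K}$ from \eqref{LyapunovForNonTrivialBundles} restricted to the Einstein locus $d_1 X_1 + d_2 X_2 = 1$. By Corollary \ref{FlowExistsForAllTimes} the flow exists for all $s$ and the projection is precompact, so the $\omega$-limit set $\Omega$ is non-empty, compact, connected and flow-invariant. Moreover, Proposition \ref{X2VariablePositive} ensures $X_2 > 0$ and $Y_1, Y_2 > 0$ along the entire trajectory.

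On the Einstein locus the formula for $\mathcal{K}'$ obtained in the proof of Proposition \ref{X2VariablePositive} collapses to
\begin{equation*}
\frac{d}{ds}\mathcal{K} = -(n-1) X_2 \left( \frac{Y_2}{Y_1} \right)^{2(d_1-1)} \left( \frac{X_1 - X_2}{Y_1} \right)^2 \leq 0,
\end{equation*}
so $\mathcal{K}$ is monotonically decreasing. Since $\mathcal{K}(-\infty) = 0$ and the variables are bounded, $\mathcal{K}$ tends to a limit $\mathcal{K}_\infty \leq 0$. Any orbit in $\Omega$ has $\mathcal{K}$ constant, hence $\mathcal{K}' \equiv 0$ on it. As $X_2, Y_1 > 0$ and $d_1 > 1$, this forces either $X_1 \equiv X_2$ or $Y_2 \equiv 0$ along that orbit.

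If $Y_2 \equiv 0$, the Einstein constraint together with the ODEs for $X_2, Y_1, X_1$ (exploiting boundedness to kill exponentially growing modes) forces $X_2 \equiv 0$, $X_1 \equiv 1/d_1$, $Y_1 \equiv 1/d_1$, which is the initial critical point. But this point is hyperbolic for $d_1 > 1$ by the linearisation in the text, so by the stable/unstable manifold theorem it cannot lie in the $\omega$-limit set of a non-trivial orbit of its unstable manifold. Hence $X_1 \equiv X_2 \equiv \frac{1}{n}$ on $\Omega$, and substituting this back into the full ODE system yields $Y_1' = Y_2' = 0$ while the equations for $X_i'$ reduce to the cone equations \eqref{DefConeSolutionOne} for $c_i := 1/(nY_i)$. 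Thus every point of $\Omega$ is a cone solution in the reduced phase space; since cone solutions are isolated and $\Omega$ is connected, $\Omega$ is a single such point, giving the asserted convergence to some cone solution.

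It remains to identify the limit as the first cone solution. Evaluating $\mathcal{K}$ at the limit and noting that the first term of $\mathcal{K}$ vanishes since $X_1 = X_2$, one obtains $\mathcal{K}_\infty = -\widehat{\mathcal{G}}(\hat{C})$ where $\hat{C} = c_1/c_2$. The inequality $\mathcal{K}_\infty \leq 0$ forces $\widehat{\mathcal{G}}(\hat{C}) \geq 0$. The qualitative analysis of $\widehat{\mathcal{G}}$ (together with the ordering $\hat{C}_1 < \hat{\omega}_1 < \hat{C}_2 < \hat{\omega}_2$ recalled after Definition \ref{DefinitionConeSolutions}) shows that $\hat{C}_1$ is a positive local maximum of $\widehat{\mathcal{G}}$, while $\hat{C}_2$ lies in the region $(\hat{\omega}_1, \hat{\omega}_2)$ where $\widehat{\mathcal{G}} < 0$. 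Therefore $\hat{C} = \hat{C}_1$, and the claim follows. The main obstacle is the case analysis on $\Omega$: specifically, ruling out the degenerate alternative $Y_2 \equiv 0$ by recognising it as the stable subsystem whose only equilibrium is the initial critical point, which is excluded from $\Omega$ by hyperbolicity.
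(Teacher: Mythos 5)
Your overall strategy (LaSalle on the $\omega$-limit set, then identify the limit as a cone solution) matches the paper's, but you run it with the wrong Lyapunov function, and this creates a genuine gap. The function $\mathcal{K}$ of \eqref{LyapunovForNonTrivialBundles} contains the factor $\left(\frac{X_1-X_2}{Y_1}\right)^2$, so it is not continuous (indeed not defined) where $Y_1=0$; to use it on the $\omega$-limit set $\Omega$ you must first know that $Y_1$ is bounded away from zero along the trajectory, and you never establish this. For the same reason your case analysis of $\mathcal{K}'\equiv 0$ on $\Omega$ is incomplete: from $\mathcal{K}' = -(n-1)X_2\,(Y_2/Y_1)^{2(d_1-1)}((X_1-X_2)/Y_1)^2$ you conclude ``$X_1\equiv X_2$ or $Y_2\equiv 0$'' by asserting $X_2, Y_1>0$ on $\Omega$, but the strict inequalities $X_2>0$, $Y_1>0$ are only known along the trajectory for finite $s$; on the closed set $\Omega$ you only get $X_2\geq 0$, $Y_1\geq 0$, so the alternatives $X_2\equiv 0$ and $Y_1\to 0$ are not excluded. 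The paper's proof is built precisely to avoid this: it uses $\mathcal{G}=Y_1^{d_1}Y_2^{d_2}$ (the inverse of the B\"ohm functional in these coordinates), which is continuous everywhere, satisfies $\mathcal{G}'=n\mathcal{G}\{\sum d_iX_i^2-\tfrac1n\}\geq 0$ on the Einstein locus, and is bounded; its convergence to a positive limit immediately yields that $Y_1,Y_2$ are bounded away from zero, and $\mathcal{G}'\equiv 0$ on $\Omega$ forces $X_1=X_2=\tfrac1n$ by the equality case of Cauchy--Schwarz, with no degenerate branches to rule out.

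Two further steps are shaky. First, your exclusion of the initial critical point from $\Omega$ ``by hyperbolicity'' is not a valid argument: a hyperbolic saddle can perfectly well lie in the $\omega$-limit set of an orbit of its own unstable manifold (homoclinic orbits). What actually excludes it here is monotonicity: $\mathcal{K}$ (or $\mathcal{G}$) is strictly monotone near the critical point, so its value along the trajectory separates permanently from its value there. Second, your identification of the \emph{first} cone solution rests on the claim that $\widehat{\mathcal{G}}(\hat{C}_2)<0$; but $\widehat{\mathcal{G}}$ increases from $0$ on $(0,\hat{\omega}_1)$ and only begins to decrease at $\hat{\omega}_1$, so its sign at $\hat{C}_2\in(\hat{\omega}_1,\hat{\omega}_2)$ is not determined by the qualitative picture you invoke. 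The correct (and simpler) argument, used in the paper, is that proposition \ref{X2VariablePositive} gives $Y_2/Y_1<\hat{\omega}_1$ along the entire flow while $\hat{C}_1<\hat{\omega}_1<\hat{C}_2$, so the limit ratio can only be $\hat{C}_1$.
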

\begin{proof}
Recall from corollary \ref{FlowExistsForAllTimes} that the variables $X_i, Y_i$ for $i=1,2$ are all positive and bounded along the flow since $d_1 >1$ and $\widehat{D}>0.$ To deduce the asymptotics, consider the function 
\begin{equation*}
\mathcal{G}= Y_1^{d_1} Y_2^{d_2},
\end{equation*}
which is in fact the inverse of B\"ohm's Lyapunov \eqref{BohmFunctional} in the $X$-$Y$-coordinates. Its derivative is given by 
\begin{equation*}
\mathcal{G}^{'} = n \mathcal{G} \left\{ \sum_{i=1}^2 d_i X_i^2 - \frac{1}{n} \right\}
\end{equation*}
and hence it is non-decreasing and bounded. Thus, it converges to a finite positive limit as $s \to \infty.$ This also shows that $Y_1, Y_2$ are bounded away from zero as $s \to \infty.$ Standard ODE theory now implies that the $\omega$-limit set $\Omega$ of the flow of $X_1, X_2,$ $Y_1, Y_2$ is non-empty, compact, connected and flow invariant. As $\mathcal{G}$ is monotonic and bounded, it must be constant on $\Omega.$ But since $d_1 X_1 + d_2 X_2 =1$ there holds $\mathcal{G}^{'} = 0$ if and only if $X_1 = X_2 = \frac{1}{n}.$ Moreover, this yields
\begin{align*}
0 & = X_1^{'} = \frac{1}{n} \left( \frac{1}{n} - 1 \right) + \frac{A_1}{d_1} Y_1^2 + \ \frac{A_3}{d_1} \frac{Y_2^4}{Y_1^2}, \\
0 & = X_2^{'} = \frac{1}{n} \left( \frac{1}{n} - 1 \right) + \frac{A_2}{d_2} Y_2^2 - 2 \frac{A_3}{d_2} \frac{Y_2^4}{Y_1^2}
\end{align*}
on the $\omega$-limit set. In particular, the pair $((n Y_1)^{-1}, (n Y_2)^{-1})$ satisfies the equations \eqref{DefConeSolutionOne} of the cone solutions. Since the bound $Y_2/Y_1 < \hat{\omega}_1$ holds along the flow and $\omega_1 < \hat{\omega}_1 < \omega_2 < \hat{\omega}_2,$ it follows that $Y_i \to \frac{1}{nc_i}$ as $s \to \infty,$ where $(c_1,c_2)$ describes the first cone solution. This completes the proof.
\end{proof}

The asymptotic behaviour of the metric can be deduced from $\dot{f}_i = \frac{X_i}{Y_i} \to c_i$ as $t \to \infty$. The metric is therefore asymptotically conical at infinity. 

\subsubsection{Ricci flat metrics: Explicit trajectories and rotational behaviour}
\label{RicciFlatPlanarSystem}

It is a special feature of the Ricci flat equation that it reduces to a planar system for the variables $X_1, Y_1$ as the variable $\mathcal{L}$ decouples completely. More generally, in the Einstein case there holds $X_2= \frac{1}{d_2}(1 - d_1 X_1)$ and the conservation law \eqref{GeneralTwoSummandsConsLaw} then determines $Y_2$ in terms of $X_1, Y_1$ and $\frac{\varepsilon}{2} \mathcal{L}^2.$ Explicitly, it is given by
\begin{equation*}
Y_2^2 = \frac{A_2}{2 A_3} Y_1^2 \pm \frac{1}{2 A_3} \sqrt{A_2^2 Y_1^4 + 4 A_3 \left( \sum_{i=1}^2 d_i X_i^2 +(n-1) \frac{\varepsilon}{2} \mathcal{L}^2 - 1 + A_1 Y_1^2 \right) Y_1^2 }.
\end{equation*}
Initially, $Y_2$ is given by the solution corresponding to '$-$' as $\lim_{s \to - \infty} Y_2 =0.$ Notice that the discriminant vanishes if and only if $Y_2^2/Y_1^2= \frac{A_2}{2 A_3}$ and recall that if $d_1 >1,$ $\widehat{D} > 0$ and $\varepsilon \geq 0$ the estimate $Y_2^2/Y_1^2 < \hat{\omega}_1^2 < \frac{A_2}{2 A_3}$ has been established. Hence, in this case, only the '$-$' solution is realised by the flow.

Therefore, consider the ODE system 
\begin{align*}
X_1^{'} = \ &  \left(  X_1 + \frac{1}{d_1} \right) \left( n \frac{d_1}{d_2}  X_1^2 - 2 \frac{d_1}{d_2} X_1 + \frac{1}{d_2} - \frac{\varepsilon}{2} \mathcal{L}^2 - 1 \right) \\
 & \ + \left( 2 A_1 + \frac{A_2^2}{2 A_3} \right) \frac{Y_1^2}{d_1}+ \frac{\varepsilon}{2}\left( 1 + \frac{n}{d_1} \right)  \mathcal{L}^2 \\ 
 & \ - \frac{A_2}{2 d_1 A_3} \sqrt{A_2^2 Y_1^4 + 4 A_3 \left( \sum_{i=1}^2 d_i X_i^2 +(n-1) \frac{\varepsilon}{2} \mathcal{L}^2 - 1 + A_1 Y_1^2 \right) Y_1^2 }, \\
Y_1^{'} = & \ Y_1 \left( n \frac{d_1}{d_2}  X_1^2 - 2 \frac{d_1}{d_2} X_1 + \frac{1}{d_2} - \frac{\varepsilon}{2} \mathcal{L}^2 - X_1 \right), \\
\mathcal{L}^{'} = & \ \mathcal{L} \ \left( n \frac{d_1}{d_2}  X_1^2 - 2 \frac{d_1}{d_2} X_1 + \frac{1}{d_2} - \frac{\varepsilon}{2} \mathcal{L}^2 \right).
\end{align*} 
In the Ricci flat case this yields indeed a $2$-dimensional system for $X_1$ and $Y_1.$ Moreover, one has $\mathcal{L}(s) = \mathcal{L}(s_0) \exp \left[ \int_{s_0}^s   \left( n \frac{d_1}{d_2}  X_1^2 - 2 \frac{d_1}{d_2} X_1 + \frac{1}{d_2} \right) d \tau \right].$

Recall from proposition \ref{VariablesBoundedRicciFlatTwoSummandsCase} that one expects $(X_1,Y_1) \to (\frac{1}{n}, \frac{1}{n c_1})$ as $s \to \infty$ if the cone solutions are real. To study the dynamics of the planar $(X_1, Y_1)$-system close to the stationary point $(\frac{1}{n},\frac{1}{n c_1}),$ consider its linearisation at that point. It is described by the matrix
\begin{equation*}
\begin{pmatrix}
-\frac{n-1}{n} & 2 \frac{c_1}{n}\left[ n-1 - 2 A_3 \frac{c_1^2}{c_2^4}\left( \frac{1}{d_1} + \frac{1}{d_2} \right) \right]  \\
-\frac{1}{c_1 n} & 0
\end{pmatrix}.
\end{equation*}
The eigenvalues are the solutions to the quadratic equation
\begin{equation*}
\lambda^2 + \frac{n-1}{n} \lambda + \frac{2}{n^2} \left[ n-1 - 2 A_3 \frac{c_1^2}{c_2^4}\left( \frac{1}{d_1} + \frac{1}{d_2} \right) \right] = 0
\end{equation*}
and it is therefore easy to deduce:

\begin{corollary} The limiting point of the Ricci flat trajectories is a stable spiral if and only if  
\begin{equation}
\frac{(n-1)(n-9)}{8} + 2 A_3 \frac{c_1^2}{c_2^4} \left( \frac{1}{d_1} + \frac{1}{d_2} \right) < 0.
\end{equation}
In particular, if $A_3 =0$ this is equivalent to $2 \leq n \leq 8.$ Otherwise, it is a stable node.
\label{StableSpiral}
\end{corollary}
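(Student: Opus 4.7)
The plan is a standard stability analysis at the equilibrium $(X_1, Y_1) = \left(\tfrac{1}{n}, \tfrac{1}{n c_1}\right)$ of the planar Ricci flat system. Stability itself is already available from proposition \ref{VariablesBoundedRicciFlatTwoSummandsCase}: every Ricci flat trajectory in the unstable manifold of \eqref{InitialCriticalPoint} converges to this point, so both eigenvalues of the linearisation must have non-positive real part. Since the trace of the displayed matrix equals $-\frac{n-1}{n} < 0$, they in fact have strictly negative real part. (Alternatively, one checks that $\det > 0$ using the defining equations \eqref{DefConeSolutionOne} of the first cone solution, which bound $A_3 c_1^2/c_2^4$ in terms of $(n-1)d_1, (n-1)d_2$.) Consequently the only question left is whether the eigenvalues are real (stable node) or complex conjugate (stable spiral), and this is decided by the sign of the discriminant of the characteristic polynomial.

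I would therefore simply compute
\begin{equation*}
\Delta = \left(\tfrac{n-1}{n}\right)^{\!2} - \tfrac{8}{n^{2}}\!\left[\,n-1 - 2 A_{3}\tfrac{c_{1}^{2}}{c_{2}^{4}}\!\left(\tfrac{1}{d_{1}}+\tfrac{1}{d_{2}}\right)\right]
\end{equation*}
directly from the characteristic polynomial $\lambda^{2} + \tfrac{n-1}{n}\lambda + \det = 0$ and simplify via the identity $(n-1)^{2} - 8(n-1) = (n-1)(n-9)$. After clearing a factor of $4/n^{2}$, the sign condition $\Delta < 0$ is exactly the inequality displayed in the corollary, and this is the spiral regime; the complementary inequality $\Delta \geq 0$ gives two real negative eigenvalues and thus a stable node.

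The special case $A_{3} = 0$ is then automatic: the inequality collapses to $(n-1)(n-9) < 0$, and since $n = d_1 + d_2 \geq 2$ this is equivalent to $2 \leq n \leq 8$.

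I do not anticipate any genuine obstacle; the entire argument is linear algebra on a $2{\times}2$ matrix plus one quadratic expansion. The only point worth double-checking is that the sign of $\det$ matches the stated sign (needed so that stability is inherited from trace negativity rather than cancelled by a sign flip in the determinant), which can be verified either from proposition \ref{VariablesBoundedRicciFlatTwoSummandsCase} or by using \eqref{DefConeSolutionOne} to substitute for $A_3 c_1^2/c_2^4$ in the bracket.
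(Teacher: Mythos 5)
Your method is exactly the paper's: the paper simply displays the linearisation and its characteristic polynomial $\lambda^2 + \tfrac{n-1}{n}\lambda + \tfrac{2}{n^2}\bigl[\,n-1-2A_3\tfrac{c_1^2}{c_2^4}(\tfrac{1}{d_1}+\tfrac{1}{d_2})\bigr]=0$ and reads off the spiral/node dichotomy from the discriminant, which is what you propose. Your discriminant $\Delta$ is written down correctly. The gap is in your final assertion that ``$\Delta<0$ is exactly the inequality displayed in the corollary.'' It is not. Expanding,
\begin{equation*}
\Delta \;=\; \frac{(n-1)(n-9)}{n^2} \;+\; \frac{16\,A_3}{n^2}\,\frac{c_1^2}{c_2^4}\left(\frac{1}{d_1}+\frac{1}{d_2}\right),
\end{equation*}
so after clearing $4/n^2$ the spiral condition reads $\tfrac{(n-1)(n-9)}{4} + 4A_3\tfrac{c_1^2}{c_2^4}(\tfrac{1}{d_1}+\tfrac{1}{d_2})<0$: the $A_3$ term enters with a \emph{plus} sign (and a different coefficient), whereas the corollary's display has $-\,2A_3\tfrac{c_1^2}{c_2^4}(\tfrac{1}{d_1}+\tfrac{1}{d_2})$. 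For $A_3=0$ the two agree and give $2\le n\le 8$, but for $A_3>0$ they give genuinely different answers. A concrete check: for $\mathbb{H}P^{2}\setminus\{\text{point}\}$ ($d_1=3$, $d_2=4$, $n=7$, $A_3=12$, $c_1^2=\tfrac{9}{25}$, $c_2^2=\tfrac{9}{5}$) one gets $A_3\tfrac{c_1^2}{c_2^4}(\tfrac{1}{d_1}+\tfrac{1}{d_2})=\tfrac{7}{9}$ and $\Delta=\tfrac{4}{441}>0$, a stable node; the corollary's displayed inequality would predict a spiral. The node answer is the consistent one, since the paper notes that this Ricci flat trajectory (the Bryant--Salamon metric) is a straight line segment converging to the fixed point, which is impossible at a spiral. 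So the displayed inequality in the corollary contains a sign error relative to the (correct) linearisation; your computation is right up to the last line, but you cannot claim it reproduces the printed formula, and you should flag the discrepancy rather than assert agreement.

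A secondary, minor point: your first stability argument (``the trajectories converge, hence both eigenvalues have non-positive real part'') is not by itself conclusive, since a saddle also attracts a positive-codimension family of trajectories. Your parenthetical fallback --- verifying $\det>0$ at the \emph{first} cone solution via \eqref{DefConeSolutionOne}, equivalently $n-1>2A_3\tfrac{c_1^2}{c_2^4}(\tfrac{1}{d_1}+\tfrac{1}{d_2})$ --- is the argument that should carry the weight, and it does hold in the relevant examples (e.g.\ $\tfrac{14}{9}<6$ in the case above), so combined with the negative trace the point is indeed asymptotically stable.
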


The reduction to the planar $(X_1,Y_1)$-system can also be used to describe explicit trajectories. Trajectories which correspond to smooth complete Ricci flat metrics must emanate from $( \frac{1}{d_1}, \frac{1}{d_1} )$ and are expected to converge to $( \frac{1}{n}, \frac{1}{n c_1} ).$ In low dimensional examples, these trajectories are actually realised by straight lines! This can be seen by introducing polar coordinates centred at $( \frac{1}{d_1}, \frac{1}{d_1} )$, and a straightforward calculation verifies that the angle remains constant.

This provides a new coordinate representation of metrics of special holonomy considered by Bryant-Salamon  \cite{BSExceptionalHolonomy} and Gibbons-Page-Pope \cite{GPPEinsteinOnSphereR3R4bundles}.

\begin{theorem}
On the open disc bundles associated to the group diagrams $G=Sp(2),$ $H=Sp(1) \times Sp(1),$ $K= U(1) \times Sp(1)$ and $G=Sp(1) \times Sp(2),$ $H=Sp(1) \times Sp(1) \times Sp(1),$ $K=Sp(1) \times Sp(1)$ the trajectories of the complete Ricci flat two summands metrics are line segments when represented in the above coordinate system.
\label{ExplicitRFTrajectories}
\end{theorem}

\subsubsection{Einstein metrics with negative scalar curvature}
\label{SubSectionAsymptoticsEinsteinScalNeg}
It will be shown that in this case the B\"ohm functional $\mathscr{F}_0$ asymptotically approaches the value of the first cone solution, and hence work of B\"ohm implies that the metric is in fact asymptotic to the first cone solution $f_i(t) = c_i \sinh(t)$.

\begin{proposition} Let $d_1 >1$ and $\widehat{D}>0.$ Then the asymptotic behaviour of trajectories corresponding to  complete Einstein metrics with negative scalar curvature is given by
\begin{align*}
X_1, X_2 \to \frac{1}{n} \ \text{ and } \ Y_1, Y_2 \to 0, \ \omega = \frac{Y_2}{Y_1} \to \omega_1 \ \text{ and } \ \mathcal{L} \to \sqrt{\frac{2}{n \varepsilon}}
\end{align*}
as $s \to \infty,$ where $\omega_1 = \frac{c_1}{c_2}$ is the ratio of the first cone solution. 

Furthermore, $\mathscr{F}_0 \to n(n-1) c_{1}^{{2d_1}/{n}} c_{2}^{{2d_2}/{n}}$ as $s \to \infty,$ which is the value of $\mathscr{F}_0$ evaluated on the first cone solution $(c_1, c_2).$
\label{TwoSummandsEinsteinMetricsWithNegativeScalarCurvature}
\end{proposition}
\begin{proof}
As in the proof of corollary \ref{FlowExistsForAllTimes}, introduce the variable $\omega = \frac{Y_2}{Y_1}$ in order to view the Ricci soliton equation as an ODE with polynomial right hand side. Furthermore, all variables remain bounded along the flow and hence the $\omega$-limit set $\Omega$ is non-empty, connected, compact and flow-invariant.

Recall from lemma \ref{LemmaLBoundedAwayFromZero} that $\mathcal{L}(s)$ is bounded away from zero for $s \geq 0.$ As the quotients $\frac{Y_i}{\mathcal{L}}$ satisfy $\frac{d}{ds} \frac{Y_i}{\mathcal{L}} = - \frac{Y_i}{\mathcal{L}} X_i,$ they are monotonically decreasing and hence converge as $s \to \infty.$ Moreover, the quotients are well-defined on $\Omega.$ Therefore their derivatives vanish, which implies $Y_i \cdot X_i = 0$ on $\Omega.$ But due to the bounds on $\mathcal{L}$ in lemma \ref{LemmaLBoundedAwayFromZero}, $X_1$ is bounded away from zero and in particular is non-zero on $\Omega.$ This implies $0 < Y_2 < \hat{\omega}_1 Y_1 \to 0$ as $s \to \infty.$ 

Now consider the evolution of the B\"ohm functional $\mathscr{F}_0 = v ^{\frac{2}{n}} \left( \tr(r_t) + \tr(( L^{(0)})^2 )\right),$ which was introduced in \eqref{BohmFunctional}. In the current coordinate system it is given by
\begin{align*}
\mathscr{F}_0 & 
= \prod_{i=1}^2 Y_i^{-2d_i / n} \left\lbrace \sum_{i=1}^2 A_i Y_i^2 - A_3 \frac{Y_2^4}{Y_1^2} + \sum_{i=1}^2 d_i X_i^2 - \frac{1}{n}\left( \sum_{i=1}^2 d_i X_i \right) ^2 \right\rbrace \\
& = \prod_{i=1}^2 Y_i^{-2d_i / n} \left\lbrace A_1 Y_1^2 + Y_2^2 \left( A_2 - A_3 \frac{Y_2^2}{Y_1^2} \right) + \sum_{i=1}^2 d_i X_i^2 - \frac{1}{n} \right\rbrace.
\end{align*}
Observe that it is bounded from below by zero as $\frac{Y_2}{Y_1} = \omega < \hat{\omega}_1 < \frac{A_2}{2 A_3}.$ Furthermore, according to \eqref{DerivativeOfBohmFunctional}, $\mathscr{F}_0$ is non-increasing and therefore converges as $s \to \infty.$ However, for $\mathscr{F}_0$ to be finite on the $\omega$-limit set $\Omega,$ one has to have $\sum_{i=1}^2 d_i X_i^2 = \frac{1}{n},$ which forces $X_1 = X_2 = \frac{1}{n}$ in the Einstein locus $\sum_{i=1}^2 d_i X_i = 1$ as $X_1, X_2 \geq 0.$ Therefore $X_1$ is constant on $\Omega$ and then also $\mathcal{L}$ due to the ODE for $X_1.$ Finally, the ODE for $\mathcal{L}$ itself shows that $\frac{\varepsilon}{2} \mathcal{L}^2 = \frac{1}{n}$ on $\Omega.$

To deduce the asymptotic behaviour of $\omega$, first observe that the monotonicity of $\mathscr{F}_0$ and
\begin{align*}
\mathscr{F}_0 & = \frac{1}{\omega^{2 d_2 / n}} \left( A_1 + A_2 \omega^2 - A_3 \omega^4 \right) + v^{2/n} \tr( ( L^{(0)})^2) \\
& = \frac{A_1}{\omega^{2 d_2/n}} + A_2 \omega^{2 d_1/n} - A_3 w^{2(2 d_1 + d_2)/n} + v^{2/n} \tr( ( L^{(0)})^2)
\end{align*}
imply that $\omega$ is bounded away from zero for $t \geq t_0 > 0.$ Notice furthermore that
\begin{align*}
\frac{d}{dt} v^{2/n} \tr( ( L^{(0)})^2) & = \frac{d}{dt} \mathscr{F}_0 - \frac{d}{dt} \left\lbrace \frac{A_1}{\omega^{2 d_2/n}} + A_2 \omega^{2 d_1/n} - A_3 w^{2(2 d_1 + d_2)/n} \right\rbrace  \\ 
& = -2 \frac{n-1}{n}  v^{2/n} \tr( ( L^{(0)})^2) - \frac{2 d_1 d_2}{n} \omega^{-2 d_2/n-1} f(\omega),
\end{align*}
where the polynomial $f(\omega)$ is defined in \eqref{FunctionDeterminingSecondDerivativeOmega}. Therefore, $v^{2/n} \tr( ( L^{(0)})^2)$ can be treated as an independent variable, which is nonnegative, bounded by $\mathscr{F}_0$ and satisfies a well-defined ODE on the $\omega$-limit set $\Omega.$

Since $\mathscr{F}_0$ takes a finite value on $\Omega$ and $\frac{d}{dt} \mathscr{F}_0 = -2 \frac{n-1}{n}  v^{2/n} \tr( ( L^{(0)})^2),$ it follows that $v^{2/n} \tr( ( L^{(0)})^2) \to 0$ as $t \to \infty.$ This in turn implies $f(\omega) \to 0$ and thus $\omega \to \omega_1$ as $t \to \infty$ due to proposition \ref{CharacterisationOfConeSolutionRatio}.

This also implies $\mathscr{F}_0 \to \frac{1}{\omega_1^{2 d_2 / n}} \left( A_1 + A_2 \omega_1^2 - A_3 \omega_1^4 \right)$ as $t \to \infty,$ which is easily seen to be the value of the first cone solution by using the identities in definition \ref{DefinitionConeSolutions}.
\end{proof}

Notice that $\frac{\dot{f}_i}{f_i} = \frac{X_i}{\mathcal{L}} \to \sqrt{\frac{\varepsilon}{2n}}$ as $t \to \infty$ immediately implies that $f_1, f_2$ grow exponentially at infinity. In fact, the metric is asymptotic to the first cone solution at infinity. This follows from a more general result of B\"ohm \cite[Corollary 2.4]{BohmNonCompactEinstein}: If the scalar curvature of the principal orbit is positive and $\mathscr{F}_0$ is bounded from below, then any Einstein trajectory that takes a constant value on $\mathscr{F}_0$ is a cone solution. An argument specifically adapted to the two summands case is given in the proof of proposition \ref{ConvergenceToConeSolution}, see also remark \ref{RemarkConvergenceConeSolutions} (a).

\subsection{Convergence to cone solutions}
\label{SectionConvergenceToConeSolutions}

The results in sections \ref{SubSectionRFMetrics} and \ref{SubSectionAsymptoticsEinsteinScalNeg} show that the non-compact Ricci flat metrics and Einstein metrics with negative scalar curvature of section \ref{CompletenessTwoSummands} are asymptotic to the cone solutions at infinity. In this section it will be shown that the asymptotics of the {\em Ricci flat} trajectories also imply that the metric actually converges to the cone solution as the volume of the singular tends to zero, i.e. as $f_2(0) = \bar{f} \to 0.$ In fact this follows for any sign of the Einstein constant and recovers convergence results due to B\"ohm \cite{BohmInhomEinstein, BohmNonCompactEinstein}. In comparison to B\"ohm's work, the main technical simplification is that the proof does not rely on the Poincar\'e-Bendixson theorem, see also remark \ref{RemarkConvergenceConeSolutions}. 

Recall from \eqref{TwoSummandsMetric} that the metric is given by 
\begin{equation*}
g_{M \setminus Q} = dt^2 +f_1(t)^2 g_S + f_2(t)^2 g_Q
\end{equation*}
away from the singular orbits. It follows from the results of Eschenburg-Wang \cite{EWInitialValueEinstein} that there exists a unique one parameter family $c_{\bar{f}}(t) = (f_1,\dot{f}_1,f_2,\dot{f}_2)(t)$ of solutions to the Einstein equations \eqref{CohomOneRSb}, \eqref{CohomOneRSc} with initial condition $c_{\bar{f}}(0)=(0,1,\bar{f},0)$ for any $\bar{f}>0.$ Moreover, B\"ohm \cite{BohmInhomEinstein} has observed that it depends {\em continuously} on the initial condition $\bar{f}>0.$ Notice also that \eqref{CohomOneRSc} implies $(d_1+1) \ddot{f}_2(0) = \frac{\varepsilon}{2} \bar{f} + \frac{A_2}{d_2} \frac{1}{\bar{f}} >0$ if either $\varepsilon \geq 0$ or $\bar{f}^2< - \frac{2}{\varepsilon} \frac{A_2}{d_2}$ and $\varepsilon < 0.$ However, the equations are a priori not well defined if $\bar{f}=0.$ This singular condition corresponds geometrically to the collapse of the full principal orbit.

To describe the behaviour of the Einstein equations as the volume of the singular orbit tends to zero more concretely, the following observation is key: In the $(X_i,Y_i, \mathcal{L})$-coordinate system defined in \eqref{RescaledTwoSummandsVariables}, the initial condition $(0,1,\bar{f},0)$ of the trajectory $c_{\bar{f}}$ corresponds to the stationary point \eqref{InitialCriticalPoint}, which is independent of $\bar{f}.$ Furthermore, the initial condition $f_2(0)=\bar{f}$ can be recovered via $\bar{f} = \lim_{s \to - \infty} \frac{\mathcal{L}}{Y_2}.$ In particular, $\bar{f} = 0$ is the limit of trajectories with $\mathcal{L} \equiv 0.$ 

However, the two coordinate systems are only equivalent along trajectories with $\mathcal{L} > 0.$ Nonetheless, due to the continuous dependence on the initial condition, any trajectory with $\mathcal{L} \equiv 0$ can hence be viewed as a continuous limit of Einstein trajectories. Hence, the collapse $\bar{f} \to 0$ is described in the $(X_i,Y_i, \mathcal{L})$-coordinates by the solution of the {\em Ricci flat} equations. By construction this solution lies in the unstable manifold  of \eqref{InitialCriticalPoint} and due to proposition \ref{NumberOfParameterFamilies} it is indeed unique.

Furthermore, due to the uniqueness of solutions $c_{\bar{f}}$ of the Einstein equations with initial condition $c_{\bar{f}}(0)=(0,1,\bar{f},0)$, one might expect that the limit as $\bar{f} \to 0$ is a cone solution. This intuition is confirmed in proposition \ref{ConvergenceToConeSolution}. 

\vspace{2mm}

In the case of Einstein metrics with positive scalar curvature, the proof of proposition \ref{ConvergenceToConeSolution} requires the concept of {\em maximal volume orbits:}

Notice that the volume $V$ of the principal orbit satisfies $\dot{V}=V \tr(L),$ where $\tr(L)$ is the mean curvature. Along trajectories corresponding to Einstein metrics with positive scalar curvature, every critical point of $V$ is a maximum or a singular orbit is reached. Therefore, if the maximal volume orbit exists, it is unique and characterised by $\tr(L) = 0.$ 

In the two summands case, if $A_3 = 0,$ due to a result of B\"ohm \cite[section 4, (e)]{BohmInhomEinstein}, the maximal volume orbit always exists. An alternative argument is discussed below, mainly to introduce a natural coordinate system which extends past the maximal volume orbit. 

\begin{lemma}
If $A_3 = 0$ and $\varepsilon <0,$ then any Einstein trajectory has a maximal volume orbit. 
\label{ExistenceMaxVolumeOrbit}
\end{lemma}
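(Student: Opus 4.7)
The plan is to combine two observations: that $\tr(L)$ decreases at a definite positive rate along the Einstein flow, and that when $A_3=0$ the conservation law prevents the warping factors $f_i$ from degenerating before $\tr(L)$ reaches zero.

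For an Einstein trajectory the potential is constant, so equation \eqref{CohomOneRSb} reduces to
\begin{equation*}
\frac{d}{dt}\tr(L) \ = \ \tr(\dot L) \ = \ \frac{\varepsilon}{2} - \tr(L^2) \ \leq \ \frac{\varepsilon}{2} \ < \ 0.
\end{equation*}
Since $\tr(L)(t) = d_1/t + O(t)$ is finite at every $\tau>0$, integration yields $\tr(L)(t) \leq \tr(L)(\tau) + \tfrac{\varepsilon}{2}(t-\tau)$ for as long as the trajectory exists. Setting $t^* := \tau + 2\tr(L)(\tau)/|\varepsilon|$, if the trajectory extends up to $t^*$ then $\tr(L)(t^*) \leq 0$, so by continuity $\tr(L)$ vanishes at some point in $(\tau, t^*]$. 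This point is precisely a maximal volume orbit.

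The remaining step is to rule out an earlier breakdown of the trajectory, and this is where I would use the hypothesis $A_3=0$. The conservation law \eqref{ReformulatedGeneralConsLaw}, applied with $u \equiv 0$ and $C=0$ and using $\tr(r_t) = A_1 f_1^{-2} + A_2 f_2^{-2}$, becomes
\begin{equation*}
\frac{A_1}{f_1^2} + \frac{A_2}{f_2^2} + d_1 m_1^2 + d_2 m_2^2 \ = \ \tr(L)^2 \ - \ \tfrac{n-1}{2}\varepsilon,
\end{equation*}
where $m_i := \dot f_i / f_i$. On the interval where $\tr(L) \in [0, \tr(L)(\tau)]$ the right-hand side is bounded (note that $-\tfrac{n-1}{2}\varepsilon>0$), so each nonnegative term on the left is bounded. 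Hence on $[\tau, t^*]$ the factors $f_i$ are bounded away from zero and the $|m_i|$ are bounded, so $(f_i, m_i)$ stays in a compact subset of the domain of the Einstein ODE. Standard ODE theory then extends the trajectory past $t^*$, and combined with the first paragraph this proves the lemma.

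The main obstacle is precisely this extension step, because the coordinate system of Section \ref{CompletenessTwoSummands} is singular at a maximal volume orbit: $\mathcal{L} = 1/\tr(L)$ blows up there, so the variables $X_i, Y_i, \mathcal{L}$ cannot be used to reach $\tr(L)=0$. The hypothesis $A_3=0$ sidesteps this difficulty because it makes $\tr(r_t)$ a sum of the two squares $A_i f_i^{-2}$ with no cross-term, and the conservation law then bounds each $f_i^{-2}$ separately. This is the content of the alternative coordinate system that the lemma promises to introduce, which one can realise explicitly by normalising through $T := \sqrt{\tr(L^2) - \tfrac{n-1}{2}\varepsilon}$ (strictly positive for $\varepsilon<0$) in place of $1/\tr(L)$; this normalisation continues smoothly past the maximal volume orbit.
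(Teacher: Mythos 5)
Your argument is correct and is essentially the paper's own proof in different notation: the quantities $m_i=\dot f_i/f_i$, $1/f_i$ and $\tr(L)$ are exactly the variables $\widehat{X}_i,\widehat{Y}_i,\widehat{\mathcal{L}}$ of \eqref{HatVariabels}, your conservation-law identity is \eqref{UnrescaledConsLaw} with $A_3=0$, and both proofs combine $\frac{d}{dt}\tr(L)\leq\frac{\varepsilon}{2}<0$ with the observation that the conservation law prevents any blow-up before $\tr(L)$ reaches zero. (The only micro-step left implicit is that $|m_i|$ bounded also gives an upper bound on $f_i$ over the finite interval $[\tau,t^{*}]$ via Gronwall, which is needed for the compactness/extension claim.)
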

\begin{proof} In analogy to \eqref{RescaledTwoSummandsVariables}, introduce the variables
\begin{equation}
\widehat{X}_i =  \frac{\dot{f}_i}{f_i}, \ \widehat{Y}_i  =  \frac{1}{f_i}, \ \text{ for } \ i=1,2, \ \text{ and } \
\widehat{\mathcal{L}} = {\tr(L)}.
\label{HatVariables}
\end{equation}

Due to the assumption $A_3=0$ the two summands Einstein equations take the form
\begin{align*}
\frac{d}{dt} & \widehat{X}_i = - \widehat{X}_i \widehat{\mathcal{L}}+ \frac{A_i}{d_i} \widehat{Y}_i^2 + \frac{\varepsilon}{2} \\
\frac{d}{dt} & \widehat{Y}_i = - \widehat{X}_i \widehat{Y}_i, \\
\frac{d}{dt} & \widehat{\mathcal{L}} = \frac{\varepsilon}{2} - \sum_{i=1}^2 d_i \widehat{X}_i^2
\end{align*}
and the conservation law is
\begin{equation}
\sum_{i=1}^2 d_i \widehat{X}_i^2 + \sum_{i=1}^2 A_i \widehat{Y}_i^2 + (n-1) \frac{\varepsilon}{2} = \widehat{\mathcal{L}}^2.
\label{UnrescaledConsLaw}
\end{equation}
Notice that the time slice has not been rescaled and that the conservation law \eqref{UnrescaledConsLaw} and $\widehat{\mathcal{L}} = \sum_{i=1}^2 d_i \widehat{X}_i$ describe the rescaled Einstein locus \eqref{EinsteinLocus}.

Clearly the above system is an ODE system with polynomial right hand side. In particular, a solution can only develop a finite time singularity if the norm of $( \widehat{X}_i, \widehat{Y}_i, \widehat{\mathcal{L}} )$ blows up. However, the conservation law \eqref{UnrescaledConsLaw} shows that this can only be the case if $\widehat{\mathcal{L}}$ blows up. At the first singular orbit, i.e. at time $t=0,$ one has $\widehat{\mathcal{L}}=+ \infty$ and $\widehat{\mathcal{L}}$ is strictly decreasing for all $t >0$ as $\varepsilon <0.$ Hence, the finite time singularity corresponds to $\widehat{\mathcal{L}}=- \infty$ and in particular there exists a time with $\tr(L) = \widehat{\mathcal{L}} = 0,$ the maximal volume orbit.
\end{proof}

\vspace{2mm}

From now on fix the normalisation $- \frac{\varepsilon}{2} \in \left\{ -n, 0, n \right\}$ of the Einstein constant $- \frac{\varepsilon}{2}$ and recall that in this case the corresponding cone solutions are given by \eqref{ExplicitConeSolutionScalPos}, \eqref{ExplicitConeSolutionScalNonPos}. The following proposition recovers the convergence results of B\"ohm \cite[Theorem 5.7]{BohmInhomEinstein}, \cite[Theorem 11.1]{BohmNonCompactEinstein}.

\begin{proposition} Suppose that $d_1 >1$ and $A_3 = 0.$ As $\bar{f} \to 0,$ the solution $c_{\bar{f}}$ to the two summands Einstein equations converges to the first cone solution on every relatively compact subset of $(0, \pi)$ if $-\frac{\varepsilon}{2}=n$ and $(0, \infty)$ if $-\frac{\varepsilon}{2} \in \left\{ -n, 0 \right\},$ respectively.
\label{ConvergenceToConeSolution}
\end{proposition}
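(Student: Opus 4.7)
The plan is to work in the rescaled $(X_1, X_2, Y_1, Y_2, \mathcal{L})$-coordinates of section~\ref{CompletenessTwoSummands}. In these coordinates every smooth Einstein trajectory $c_{\bar f}$ emanates from the common hyperbolic stationary point $p^* = (1/d_1, 0, 1/d_1, 0, 0)$ (hyperbolic since $d_1 > 1$), and the value $\bar f = \lim_{s \to -\infty} \mathcal{L}/Y_2$ selects the trajectory within the unstable manifold of $p^*$. By the stable manifold theorem, the dependence of the trajectory on this direction is continuous, so the one-parameter family extends continuously to $\bar f = 0$, where it produces the unique trajectory $\gamma_0$ with $\mathcal{L} \equiv 0$ (the $\mathcal{L}$-equation decouples and preserves $\mathcal{L} \equiv 0$). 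This $\gamma_0$ is precisely the Ricci flat trajectory.

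By Proposition~\ref{VariablesBoundedRicciFlatTwoSummandsCase} (applied to the decoupled $A_3 = 0$ case, where the cone solution is unique), $\gamma_0$ converges as $s \to \infty$ to the point $q^* = \bigl(\tfrac{1}{n}, \tfrac{1}{n}, \tfrac{1}{n c_1}, \tfrac{1}{n c_2}, 0\bigr)$. A short computation shows that $q^*$ is a stationary point of the \emph{full} Einstein ODE for every $\varepsilon$, because all $\varepsilon$-terms in the right-hand side carry a factor of $\mathcal{L}$. The Jacobian at $q^*$ is block-triangular: the $\mathcal{L}$-direction is an unstable eigendirection with eigenvalue $\tfrac{1}{n}$, while the $(X_i, Y_i)$-block reduces (since $A_3 = 0$) to two copies of the matrix of Corollary~\ref{StableSpiral}, both of whose eigenvalues have negative real part. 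Hence $q^*$ is a hyperbolic saddle with $\gamma_0$ in its stable manifold and a one-dimensional unstable manifold. Moreover, substituting the cone solution $f_i(t) = c_i \sigma(t)$, with $\sigma$ equal to $\sin$, $\mathrm{id}$ or $\sinh$ according to the sign of $\varepsilon$, gives $X_i \equiv \tfrac{1}{n}$, $Y_i = \tfrac{1}{n c_i \sigma'(t)}$, $\mathcal{L} = \tfrac{\sigma(t)}{n \sigma'(t)}$, a trajectory that tends to $q^*$ as $t \to 0^+$ with outgoing tangent purely in the $\mathcal{L}$-direction. Thus the cone solution coincides with the unstable manifold of $q^*$.

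Convergence now follows from a $\lambda$-lemma style shadowing argument at the hyperbolic saddle $q^*$: for small $\bar f > 0$, $\gamma_{\bar f}$ follows $\gamma_0$ arbitrarily close to $q^*$ before necessarily exiting along the one-dimensional unstable manifold, i.e.\ along the cone solution. To translate this into convergence on compact subsets of the original time interval, I use $t = \int \mathcal{L}\, d\tau$ and note that $t$ vanishes identically along $\gamma_0$, so the entire shadowing stretch contributes negligibly to the original time. Concretely, fix a compact $[t_1, t_2]$ inside the cone solution domain and let $\tau_{\bar f}(t)$ denote the rescaled time at which $\gamma_{\bar f}$ reaches original time $t$; the exit-time map from a small neighbourhood of $q^*$, controlled uniformly in $\bar f$ by hyperbolicity, reparametrises the rescaled convergence of $\gamma_{\bar f}$ to the cone trajectory into uniform convergence in $t$. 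The main obstacle is precisely this reparametrisation step: one must quantify how the slow growth of $t$ near $q^*$ compensates for the smallness of the $\mathcal{L}$-component of $\gamma_{\bar f}$, so that the target interval $[t_1, t_2]$ is attained with the correct asymptotic profile. This is handled by standard hyperbolic stable-manifold estimates applied at $q^*$.
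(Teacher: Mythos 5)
Your overall architecture differs from the paper's: where you set up a hyperbolic saddle at the base point $q^*=(\tfrac1n,\tfrac1n,\tfrac{1}{nc_1},\tfrac{1}{nc_2},0)$ and invoke a $\lambda$-lemma, the paper argues via the B\"ohm functional \eqref{BohmFunctional}: by \eqref{DerivativeOfBohmFunctional} it is monotone along Einstein trajectories, it is constant and equal to its minimal value on the (unique, since $A_3=0$) cone solution, and $c_{\bar f}$ starts arbitrarily close to that minimal value as $\bar f\to 0$; the near-level sets of $\mathscr{F}_0$ then pin the trajectory to the cone solution. Your identification of the $\bar f=0$ limit with the $\mathcal{L}\equiv 0$ Ricci flat trajectory and the use of proposition \ref{VariablesBoundedRicciFlatTwoSummandsCase} coincide with the paper's first step, and your computation that $q^*$ is stationary with the cone solution tangent to the $\mathcal{L}$-direction is correct.

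However, there is a genuine gap in the case $\varepsilon=-n$. The coordinates of section \ref{CompletenessTwoSummands} carry the factor $\mathcal{L}=1/\tr(L)$ (Einstein case), and along any positive scalar curvature Einstein trajectory $\tr(L)$ decreases strictly to $0$ at the maximal volume orbit; for the cone solution $f_i=c_i\sin(t)$ this happens at $t=\pi/2$, where $\mathcal{L}=\tan(t)/n$ blows up. Your entire shadowing argument lives in these coordinates, so at best it yields convergence on compact subsets of $(0,\pi/2)$, not of $(0,\pi)$ as claimed. Getting past the maximal volume orbit is precisely why the paper switches to the unrescaled variables \eqref{HatVariabels} and uses that $\mathscr{F}_0$ decreases before and increases after the maximal volume orbit, with minimum characterised by the cone solution. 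A second, repairable, error: the linearisation at $q^*$ of the full five-dimensional system is \emph{not} a saddle with one-dimensional unstable manifold. Differentiating $\mathcal{S}_1=\sum d_iX_i^2+\sum A_iY_i^2-1$ and $\mathcal{S}_2=\sum d_iX_i-1$ along the flow shows that the directions transverse to the Einstein locus \eqref{EinsteinLocus} contribute eigenvalues $\tfrac1n-1$ and $\tfrac2n$ at $q^*$, so there is a second unstable direction besides $\mathcal{L}$; your claim that the $(X_i,Y_i)$-block consists of two copies of the matrix of corollary \ref{StableSpiral} is false, since that matrix is the linearisation of the system already reduced by the two constraints. The saddle structure you want (two-dimensional stable, one-dimensional unstable manifold equal to the cone solution) only holds for the flow restricted to the invariant Einstein locus, and the argument must be phrased there.
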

\begin{proof}
Recall that the limit trajectory with $\bar{f} = 0$ corresponds to a trajectory with $\mathcal{L} \equiv 0,$ more precisely the unique solution of the Ricci flat system in $X_i,$ $Y_i$ in the unstable manifold of \eqref{InitialCriticalPoint}. According to proposition \ref{VariablesBoundedRicciFlatTwoSummandsCase}, the Ricci flat trajectory asymptotically approaches the first cone solution, which takes the constant value $X_i = \frac{1}{n}$ and $Y_i=\frac{1}{nc_i}$ for $i=1,2.$ Notice that this is in fact the value at $t=0$ of all cone solutions. Therefore it will be called {\em base point} of the cone solution. 

If $\varepsilon \geq 0$ notice as in the proof of proposition \ref{TwoSummandsEinsteinMetricsWithNegativeScalarCurvature} that the variables $X_i, Y_i$ are bounded, that the B\"ohm functional $\mathscr{F}_0$ is bounded from below and non-increasing, and that it has a critical point on the cone solution. In fact, any Einstein trajectory that takes a constant value on $\mathscr{F}_0$ is a cone solution and $\mathscr{F}_0 = n(n-1) c_{1}^{{2d_1}/{n}} c_{2}^{{2d_2}/{n}}.$ However, since $A_3=0,$ the cone solution is unique and hence the minimum.

If $-\frac{\varepsilon}{2}=n,$ then \eqref{DerivativeOfBohmFunctional} implies that $\mathscr{F}_0$ achieves its minimum along a trajectory $c_{\bar{f}}$ on the maximal volume orbit. On any maximal volume orbit the coordinates \eqref{HatVariables} satisfy $\sum_{i=1}^2 d_i \widehat{X}_i = \widehat{\mathcal{L}}=0$ and the conservation law \eqref{UnrescaledConsLaw} hence implies that the variables $\widehat{X}_i, \widehat{Y}_i$ are bounded. Thus, $\mathscr{F}_0 = n(n-1) \prod_{i=1}^2 \widehat{Y}_i^{-2d_i/n}$ has a minimum on the maximal volume orbit, which is achieved by the value of cone solution. 

However, $\mathscr{F}_0$ is constant on the cone solution and since the solution $c_{\bar{f}}$ approaches the base point of the cone solution as $\bar{f} \to 0$, the claim follows.
\end{proof}

\begin{remarkroman}
(a) The simplifying assumption $A_3=0$ can be relaxed. For the geometric examples in \ref{ExamplesConeSolutionsFromHopfFibrations}, one can calculate directly that the first cone solution realises the minimum. So the exact same proof works if $\widehat{D}>0$ and $\varepsilon \geq 0$ due to proposition \ref{X2VariablePositive}.

(b) The behaviour of the B\"ohm functional close to cone solutions was studied in a more general context in \cite{BohmNonCompactEinstein}. In particular, B\"ohm shows that any {\em stable} cone solution is a local attractor of the cohomogeneity one Einstein equations. In the two summands case, the cone solutions are stable if $d_1>1$. However, the cone solutions corresponding to the circle bundle construction of section \ref{SectionSolitonsFromCircleBundles} are {\em un}stable.

(c) In the original proof, B\"ohm \cite{BohmInhomEinstein} uses a coordinate system specifically adapted to the cone solution to find a limit trajectory, which solves a planar ODE. The limit trajectory lies in a compact planar domain and the Poincar\'e-Bendixson theorem is applied to prove convergence to the base point. Stability of the first cone solution then follows via an attractor function, a version of which is \eqref{LyapunovForNonTrivialBundles} in the Einstein case.

The planar ODE in B\"ohm's work is similar to the reduction of the Ricci flat equations to a planar ODE in section \ref{RicciFlatPlanarSystem}. However, in the Ricci soliton case, the extra degree of freedom of the soliton potential prevents a similar reduction and a different proof is required.

(d) B\"ohm's \cite{BohmNonCompactEinstein} construction of the complete, non-compact Einstein metrics which were recovered in section \ref{CompletenessTwoSummands} relies on the above convergence result, i.e. on the fact that for $f_2(0)= \bar{f} \to 0$ the trajectories remain close to the cone solution and are thus defined for all times. The proof in section \ref{CompletenessTwoSummands} shows moreover that one obtains an Einstein metric for {\em all} $f_2(0)>0.$ Notice that in the Ricci flat case the metric is unique up to scaling.
\label{RemarkConvergenceConeSolutions}
\end{remarkroman}

\subsection{B\"ohm's Einstein metrics of positive scalar curvature}
\label{SectionBohmEinsteinMetricsPosScal}

For the convenience of the reader, this section explains how the refined asymptotics of the {\em Ricci flat} equations in section \ref{RicciFlatPlanarSystem} and proposition \ref{ConvergenceToConeSolution} yield B\"ohm's \cite{BohmInhomEinstein} Einstein metrics of positive scalar curvature on $S^5, \ldots, S^9$ and other low dimensional spaces, including $S^2 \times S^3, \ldots, S^2 \times S^7$ or $S^4 \times S^5$.

It should be emphasised that the overall strategy of the construction due to B\"ohm remains the same. 

\subsubsection{Symmetric solutions}
\label{SymmetricSolutions}

A solution $c_{\bar{f}} =  (f_1,\dot{f}_1,f_2,\dot{f}_2)$ of the two summands Einstein equations with initial condition $c_{\bar{f}}(0)=(0,1,\bar{f},0)$ is called {\em symmetric} if there is $\tau>0$ such that $c_{\bar{f}}(\tau)=(0,-1,\bar{f},0)$. In fact, $c_{\bar{f}}$ is symmetric if and only if there exists $t_0>0$ such that $c_{\bar{f}}(t_0)=(f_1(t_0),0,f_2(t_0),0)$ with $f_1(t_0),f_2(t_0) >0.$ In particular, reflection along the maximal volume orbit, the unique orbit with $\tr(L)=0,$ is an isometry precisely for symmetric solutions.

Moreover, since $\omega = \frac{f_1}{f_2}$ satisfies $\dot{\omega} = \omega( \frac{\dot{f}_1}{f_1} - \frac{\dot{f}_2}{f_2}),$ any symmetric solution is characterised by a critical point of $\omega$ on the maximal volume orbit. It is an important observation due to B\"ohm \cite[Lemma 4.2.1]{BohmInhomEinstein} that critical points of $\omega$ are {\em non-degenerate}. The non-degeneracy of the critical points of $\omega$ allows the application of the following general counting principle. 

\begin{lemma} Let $T_{\bar{f}}, \varepsilon_{\bar{f}}$ be continuous, positive functions of the real parameter $\bar{f}.$ Suppose that $c_{\bar{f}} \colon [0,T_{\bar{f}} + \varepsilon_{\bar{f}}) \to \R^n$ is a family of $C^{1}$-maps which depends continuously on $\bar{f}$ and $\omega \in C^{1}$ is a real valued map such that any critical point of $\omega = \omega \circ c_{\bar{f}}$ is non-degenerate and $\dot{\omega}(0) >0$ for all $\bar{f}.$ 

Let $\mathcal{C}(\bar{f}) = \mathcal{C}(\bar{f}, T_{\bar{f}})$ denote the number of critical points of $\omega$ along $c_f$ before $T_{\bar{f}}.$ Fix $\bar{f}_1 < \bar{f}_2.$ Then the following statements hold:
\begin{enumerate}
\item If $\dot{\omega}(T_{\bar{f}}) \neq 0$ for all $\bar{f} \in [\bar{f}_1, \bar{f}_2]$  then $\mathcal{C}(\bar{f})$ is constant on $[\bar{f}_1, \bar{f}_2].$
\item If $\bar{f}^{*}$ is the unique value of $\bar{f} \in [\bar{f}_1, \bar{f}_2]$ with $\dot{\omega}(T_{\bar{f}})=0$ then
\begin{equation*}
| \mathcal{C}(\bar{f}^{'}) - \mathcal{C}(\bar{f}^{''}) | \leq 1
\end{equation*}
for all $\bar{f}^{'}, \bar{f}^{''} \in  [\bar{f}_1, \bar{f}_2]$ with $\bar{f}^{'} < \bar{f}^{*} < \bar{f}^{''}.$ 
\end{enumerate}
In particular, for any $\bar{f}^{'}, \bar{f}^{''} \in  [\bar{f}_1, \bar{f}_2]$ there exist at least $| \mathcal{C}(\bar{f}^{'}) - \mathcal{C}(\bar{f}^{''}) | $ solutions with $\dot{\omega}(T_{\bar{f}})=0$ for $\bar{f} \in [\bar{f}^{'}, \bar{f}^{''}].$
\label{GeneralCountingArgument}
\end{lemma}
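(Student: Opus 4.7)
The plan is to apply the implicit function theorem to the vanishing locus of $\frac{d}{dt}(\omega \circ c_{\bar{f}})$ in the $(\bar{f},t)$-strip, and then track how the resulting branches of critical points can cross the moving right endpoint $t = T_{\bar{f}}$ as $\bar{f}$ varies.

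First I would set $F(\bar{f},t) = \frac{d}{dt}(\omega \circ c_{\bar{f}})(t)$ and note that the non-degeneracy hypothesis is precisely the statement that $\partial_t F \neq 0$ on $\{F = 0\}$. The implicit function theorem therefore furnishes through each zero $(\bar{f}_0, t_0)$ of $F$ a unique $C^1$-branch $t = \tau(\bar{f})$ of zeros. Combining this with a subsequence-compactness argument on $[0, T_{\bar{f}_0}]$ shows that if $\bar{f}_n \to \bar{f}_0$ and $s_n$ is a sequence of critical points of $\omega \circ c_{\bar{f}_n}$ with $s_n \in [0, T_{\bar{f}_n}]$, then a subsequence converges to some critical point $t_j$ of $\omega \circ c_{\bar{f}_0}$ and eventually lies on the unique branch through $(\bar{f}_0, t_j)$. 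Consequently no new critical point can appear from the interior of $[0, T_{\bar{f}}]$ as $\bar{f}$ is perturbed, and by continuity together with $\dot{\omega}(0) > 0$ none may enter from the left endpoint either. Hence $\mathcal{C}(\bar{f})$ can only change through a branch crossing the right endpoint $T_{\bar{f}}$.

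With this setup, part (1) is essentially immediate: if $\dot{\omega}(T_{\bar{f}}) \neq 0$ throughout $[\bar{f}_1, \bar{f}_2]$ then no branch can sit at the right endpoint for any such $\bar{f}$, so the finitely many branches through the critical points of $\omega \circ c_{\bar{f}_0}$ persist strictly inside $(0, T_{\bar{f}})$ for $\bar{f}$ near $\bar{f}_0$, which makes $\mathcal{C}$ locally constant and hence constant on the connected interval. For part (2), at $\bar{f}^{*}$ exactly one branch $\tau^{*}(\bar{f})$ passes through $(\bar{f}^{*}, T_{\bar{f}^{*}})$ while all remaining critical points of $\omega \circ c_{\bar{f}^{*}}$ lie strictly in $(0, T_{\bar{f}^{*}})$ and persist in the interior for $\bar{f}$ near $\bar{f}^{*}$. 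Applying part (1) to $[\bar{f}_1, \bar{f}^{*} - \delta]$ and $[\bar{f}^{*} + \delta, \bar{f}_2]$ for small $\delta > 0$, $\mathcal{C}$ is constant on each of the two subintervals, with its value there determined only by whether $\tau^{*}(\bar{f}) \leq T_{\bar{f}}$ on that side; since this is a binary option, the two constants differ by at most $1$.

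The concluding assertion then follows by partitioning $[\bar{f}', \bar{f}'']$ at the values $\bar{f}^{*}$ with $\dot{\omega}(T_{\bar{f}^{*}}) = 0$, applying (1) on each subinterval between consecutive such values and (2) at each of them, and summing the at-most-unit jumps. I expect the main technical obstacle to be the first step: verifying that persistence of interior branches is strict and that no accumulation of critical points occurs at the boundary. This relies essentially on the non-degeneracy hypothesis, which rules out the otherwise possible pair-creation or annihilation of critical points in the interior as $\bar{f}$ varies.
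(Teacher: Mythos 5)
The paper does not actually prove this lemma: it is stated as a general counting principle and attributed to the Spherical Bernstein literature and to B\"ohm \cite{BohmInhomEinstein}[Lemmas 4.4 and 4.5], so there is no in-text argument to compare yours against. On its own terms, your proof is the standard one and its structure is sound: critical points of $\omega\circ c_{\bar f}$ are isolated and finite in number on $[0,T_{\bar f}]$, they persist along unique local branches, none can sit at $t=0$ because $\dot\omega(0)>0$, so the count can only change when a branch crosses the moving endpoint $t=T_{\bar f}$, which forces $\dot\omega(T_{\bar f})=0$; parts (1) and (2) and the telescoping conclusion then follow as you describe (in the final step you should note that if the set $\{\bar f:\dot\omega(T_{\bar f})=0\}$ were infinite the conclusion is vacuously true, since your partition argument presupposes finitely many such values).

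The one point you should not gloss over is the regularity needed for the implicit function theorem and, more importantly, for the local \emph{uniqueness} of the perturbed critical point. Setting $F(\bar f,t)=\frac{d}{dt}(\omega\circ c_{\bar f})(t)$, the hypotheses as stated only give $F$ continuous in $t$ and continuous dependence on $\bar f$; the IFT needs $F$ to be jointly $C^1$. More to the point, even granting that each $F(\bar f,\cdot)$ has only non-degenerate zeros, $C^0$-closeness of $F(\bar f,\cdot)$ to $F(\bar f_0,\cdot)$ does \emph{not} prevent a single down-crossing from splitting into three non-degenerate crossings (pair creation) under perturbation --- ruling this out requires that $\partial_t F(\bar f,\cdot)$ also converge to $\partial_t F(\bar f_0,\cdot)$, i.e.\ $C^1$-continuous dependence of $\dot\omega$ on $\bar f$, so that $F(\bar f,\cdot)$ is strictly monotone near each zero of $F(\bar f_0,\cdot)$. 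This is really an under-specification of the lemma's hypotheses (non-degeneracy already presupposes second derivatives), and it is harmless in the intended application, where $c_{\bar f}$ solves a smooth ODE and depends smoothly on $\bar f$; but your write-up should state explicitly that this stronger dependence is what the "persistence of interior branches is strict" step rests on, rather than the bare continuity assumed in the statement.
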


\begin{remarkroman}
(a) In fact, if $c_{\bar{f}}$ is just continuous, the lemma can still be used to count roots of continuous functions along $c_{\bar{f}}.$ In this case $c_{\bar{f}}$ has to intersect the zero set of the function transversally. 

(b) B\"ohm proved the counting principle explicitly in the case where $T_{\bar{f}}$ is the time when the maximal volume orbit is reached, \cite[Lemmas 4.4 and 4.5]{BohmInhomEinstein}. More recently it was used by Foscolo-Haskins in the construction of nearly K\"ahler metrics, \cite[Lemma 7.2]{FHNearlyKaehler}.
\end{remarkroman}

\begin{theorem}[B\"ohm]
Let $d_1 >1,$ $A_3=0$ and $-\frac{\varepsilon}{2} = n.$ If the dimension of the principal orbit satisfies $2 \leq n \leq 8,$ there exist infinitely many symmetric solutions to the two summands Einstein equations.
\label{SymmetricEinsteinMetrics}
\end{theorem}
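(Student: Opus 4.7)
The plan is to use the counting principle from Lemma \ref{GeneralCountingArgument} applied to the maximal volume orbit time $T_{\bar f}$, combined with the spiralling behaviour near the first cone solution afforded by Corollary \ref{StableSpiral} together with the convergence statement in Proposition \ref{ConvergenceToConeSolution}. Since $A_3=0$ and $\varepsilon<0$, Lemma \ref{ExistenceMaxVolumeOrbit} guarantees that $T_{\bar f}$ is defined and depends continuously on $\bar f>0$. A symmetric solution is exactly a trajectory with $\dot f_1(T_{\bar f})=\dot f_2(T_{\bar f})=0$; since $\tr(L)(T_{\bar f})=0$ already holds at the maximal volume orbit, the symmetry condition reduces to $\dot\omega(T_{\bar f})=0$ where $\omega=f_1/f_2$. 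The fact that critical points of $\omega$ along Einstein trajectories are non-degenerate, together with continuous dependence of $c_{\bar f}$ and $T_{\bar f}$ on $\bar f$, allows Lemma \ref{GeneralCountingArgument} to be applied with $\omega$ as the tracked quantity.

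Next, I would count critical points of $\omega$ before $T_{\bar f}$. In the rescaled variables of section \ref{CompletenessTwoSummands}, the condition $\dot\omega=0$ is equivalent to $X_1=X_2$, and the first cone solution appears as the stationary point $(X_1,Y_1)=(1/n,1/(nc_1))$ of the Ricci flat planar subsystem obtained when $\mathcal{L}\equiv 0$. Since $2\leq n\leq 8$ and $A_3=0$, Corollary \ref{StableSpiral} identifies this stationary point as a stable spiral for the Ricci flat system, so that the Ricci flat trajectory emanating from the initial critical point \eqref{InitialCriticalPoint} makes infinitely many rotations as $s\to\infty$, and each full rotation crosses the line $\{X_1=X_2\}$ twice. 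By Proposition \ref{ConvergenceToConeSolution}, $c_{\bar f}$ converges to the first cone solution on compact subsets as $\bar f\to 0$, and correspondingly the Einstein trajectory in the rescaled coordinates converges on compact time intervals to this Ricci flat, inward-spiralling trajectory. Thus, given any integer $N$, for $\bar f$ sufficiently small the trajectory $c_{\bar f}$ accumulates at least $N$ crossings of $\{X_1=X_2\}$ before a fixed large rescaled time, and in particular before $T_{\bar f}$. On the other hand, a crude argument shows $\mathcal{C}(\bar f)$ is bounded (in fact small) for $\bar f$ large, for instance when $\bar f$ is chosen so that the initial condition already lies far from the cone locus.

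Combining these two observations, $\mathcal{C}(\bar f)$ can be made to differ by an arbitrarily large amount between a fixed $\bar f_2$ and sufficiently small $\bar f_1$. By the jump bound in Lemma \ref{GeneralCountingArgument}, this forces infinitely many values of $\bar f\in(0,\bar f_2]$ at which $\dot\omega(T_{\bar f})=0$, each of which reflects to a smooth symmetric Einstein solution via the reflection construction described above Lemma \ref{GeneralCountingArgument}. Rescaling different symmetric solutions to the normalisation $\varepsilon/2=-n$ then yields non-homothetic examples, since two such solutions realising different counts $\mathcal{C}$ cannot be isometric.

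The main obstacle is the asymptotic matching step: one must ensure that the spiralling of the Ricci flat limit trajectory (which takes place over infinite rescaled time $s$) genuinely produces many critical points of $\omega$ along the Einstein trajectory $c_{\bar f}$ \emph{before} the maximal volume orbit is reached. This requires checking that $T_{\bar f}\to\infty$ in the rescaled time parameter as $\bar f\to 0$ (so that arbitrarily many rotations fit in), and that the linearised spiral rotation number near the cone base point, computed from the eigenvalues in Corollary \ref{StableSpiral}, is inherited by $c_{\bar f}$ uniformly on compact sets. Both should follow from the continuous dependence on $\bar f$ together with the explicit asymptotic behaviour of the Ricci flat trajectory established in Proposition \ref{VariablesBoundedRicciFlatTwoSummandsCase}, but verifying these uniform estimates is where the real work lies.
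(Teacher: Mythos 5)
Your proposal follows essentially the same route as the paper: the counting principle of Lemma \ref{GeneralCountingArgument} applied at the maximal volume orbit, non-degeneracy of critical points of $\omega$, the stable spiral of Corollary \ref{StableSpiral} at the base point of the first cone solution, and convergence of $c_{\bar f}$ to the Ricci flat limit trajectory as $\bar f \to 0$ to force $\mathcal{C}(\bar f) \to \infty$. Two small remarks: the paper pins down the baseline count explicitly by normalising $\Ric^Q = d_2 - 1$ so that $(f_1,f_2)=(\sin t,\cos t)$ is a solution with $\mathcal{C}=0$, rather than your vaguer ``crude argument for large $\bar f$''; and on the Einstein locus $d_1X_1+d_2X_2=1$ your crossing condition $X_1=X_2$ is the same as the paper's $X_1=\tfrac 1n$.

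The one step you flag as unresolved --- ensuring the spiralling produces critical points \emph{before} the maximal volume orbit, for which you propose to show $T_{\bar f}\to\infty$ in rescaled time --- is closed in the paper by a simpler observation that makes any uniform estimate unnecessary: the maximal volume orbit is characterised by $\tr(L)=0$, which in the rescaled coordinates of section \ref{CompletenessTwoSummands} is precisely the blow-up time of $\mathcal{L}=1/(-\dot u+\tr(L))$. Hence the rescaled system is valid exactly up to the maximal volume orbit, and \emph{every} critical point of $\omega$ detected in the rescaled variables automatically occurs before $T_{\bar f}$. One then only needs $C^0$-closeness of $c_{\bar f}$ to the Ricci flat spiral on a fixed compact $s$-interval containing $N$ transversal crossings of $\{X_1=\tfrac 1n\}$ (transversality being supplied by the non-degeneracy lemma), which is exactly what continuous dependence on $\bar f$ gives. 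With that observation inserted, your argument is complete and coincides with the paper's.
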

\begin{proof}
Recall that symmetric solutions are induces by critical points of $\omega$ at the maximal volume orbit. With the normalisation $A_2 = d_2 (d_2-1)>0,$ i.e. in geometric applications $\Ric^Q = d_2-1 > 0,$ the metric of the round sphere $(f_1,f_2)(t) = (\sin(t), \cos(t))$ induces a solution to the two summands Einstein equations without any critical point of $\omega$ before the maximal volume orbit. 

According to lemma \ref{GeneralCountingArgument}, it suffices to show that there are trajectories with an arbitrarily high number of critical points of $\omega$ before the maximal volume orbit. Recall that the maximal volume orbit of a trajectory is achieved exactly when $\tr(L)=0.$ In the $(X_i,Y_i, \mathcal{L})$-coordinates \eqref{RescaledTwoSummandsVariables} this corresponds to the blow up time of $\mathcal{L}.$ In particular, critical points of $\omega$ which are detected by the rescaled system happen to be before the maximal volume orbit. Recall that $\omega{'} = \omega \left( X_1 - X_2 \right)$ and that every critical point in the rescaled variables also corresponds to a critical point of $\omega$ in the original time frame $t.$ Since the Einstein trajectories lie in the subvariety $d_1 X_1 + d_2 X_2 = 1,$ critical points occur if and only if $X_1= \frac{1}{n}.$ 

Recall that by proposition \ref{VariablesBoundedRicciFlatTwoSummandsCase} the trajectory of the {\em Ricci flat} system satisfies $X_i \to \frac{1}{n}$ and $Y_i \to \frac{1}{c_i n}$ where $(c_1,c_2)$ denotes the first cone solution. Moreover, observe that the Ricci flat system is realised by solutions to the two summands system for any value of $\varepsilon \in \R$ by the trajectory with $\mathcal{L} \equiv 0,$ as $\varepsilon$ and $\mathcal{L}$ only occur in the combination $\frac{\varepsilon}{2}\mathcal{L}^2.$ However, as explained in section \ref{SectionConvergenceToConeSolutions}, the limit $\mathcal{L} \equiv 0$ exactly corresponds to a smoothing of the trajectory $c_{\bar{f}}$ in the limit $\bar{f}=0.$ Due to the continuous dependence of the solution on the initial condition, for any $\varepsilon \in \R$ and $\bar{f} >0$ small enough, the solution to the two summands system approaches the base point of the first cone solution along a trajectory which is $C^{0}$-close to the Ricci flat trajectory $\gamma_{\text{RF}}$ of proposition \ref{VariablesBoundedRicciFlatTwoSummandsCase} with $\mathcal{L} \equiv 0,$ and then remains close to the actual cone solution in the sense of proposition \ref{ConvergenceToConeSolution}.

The dimension assumption and corollary \ref{StableSpiral} imply that the projection of the Ricci flat trajectory $\gamma_{\text{RF}}$ onto the $(X_1, Y_1)$-plane rotates infinitely often around the stationary point $(\frac{1}{n}, \frac{1}{c_1 n}),$ which is the base point of the first cone solution. Hence, the variable $X_1$ takes the value $X_1 = \frac{1}{n}$ arbitrarily often, which implies that $\mathcal{C}(\bar{f}, T_{\bar{f}}) \to \infty$ as $\bar{f} \to 0,$ where $T_{\bar{f}}$ denotes the time of the maximal volume orbit.

A direct computation of curvatures shows that the metrics are inhomogeneous and non-isometric, cf. \cite[section 6]{BohmInhomEinstein}.
\end{proof}

As an explicit application, theorem \ref{SymmetricEinsteinMetrics} recovers B\"ohm's Einstein metrics on certain low dimensional spaces \cite[Theorem 3.4]{BohmInhomEinstein}.

\begin{corollary}[B\"ohm]
Let $d_S >1$ and suppose that $Q$ is a compact, connected, isotropy irreducible homogeneous space of positive Ricci curvature and of dimension $d_Q.$ If $2 \le d_S, d_Q$ and $d_S + d_Q \leq 8,$ then there exist infinitely many non-isometric cohomogeneity one Einstein metrics of positive scalar curvature on $S^{d_S+1} \times Q.$
\end{corollary}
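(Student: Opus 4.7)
The plan is to realise $S^{d_S+1} \times Q$ as a cohomogeneity one manifold fitting the two summands framework of section \ref{SectionGeometricSetUp} and to invoke theorem \ref{SymmetricEinsteinMetrics}. First I would set up the group action: writing $Q = G/H$, the compact connected group $SO(d_S+1) \times G$ acts on $S^{d_S+1} \times Q$ with principal orbit $S^{d_S} \times Q$ of codimension one and two singular orbits, each a copy of $Q$, sitting over the two poles of the sphere factor. Choosing a normal geodesic $\gamma$ along the sphere factor, the metric takes the form $g = dt^2 + f_1(t)^2 g_S + f_2(t)^2 g_Q$, so that $d_1 = d_S$, $d_2 = d_Q$, $n = d_S + d_Q$, $A_1 = d_S(d_S-1) > 0$ and $A_2 = d_Q \Ric^Q > 0$. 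Because the principal orbit is a genuine Riemannian product of the sphere fibre and the base $Q$, the horizontal distribution of the trivial submersion $S^{d_S}\times Q \to Q$ is integrable, so the associated O'Neill tensor vanishes identically and $A_3 = 0$.

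With $d_1 = d_S > 1$, $A_3 = 0$, and $2 \leq n = d_S + d_Q \leq 8$, all hypotheses of theorem \ref{SymmetricEinsteinMetrics} are satisfied, yielding infinitely many symmetric solutions $c_{\bar{f}}$ to the two summands Einstein equations with $\varepsilon/2 = -n$. For each such symmetric solution, the maximal volume orbit exists at some $t_0 > 0$ with $\dot{f}_1(t_0) = \dot{f}_2(t_0) = 0$ and $f_1(t_0), f_2(t_0) > 0$; reflecting along the normal geodesic at $t_0$, as in the discussion preceding lemma \ref{GeneralCountingArgument}, produces a smooth Einstein metric on $[0, 2 t_0]$ that closes up with the singular orbit $Q$ at both ends. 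Since the closing orbit is $Q$ at both ends and the collapsing fibre is the $d_S$-sphere, the resulting smooth manifold is precisely $S^{d_S+1} \times Q$.

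It remains to argue that infinitely many of these metrics are pairwise non-isometric. Here I would use the fact, already exploited in the proof of theorem \ref{SymmetricEinsteinMetrics}, that the number $\mathcal{C}(\bar{f})$ of critical points of $\omega = f_1/f_2$ before the maximal volume orbit tends to infinity as $\bar{f} \to 0$, due to the spiralling of the Ricci-flat trajectory around the base point of the first cone solution given by corollary \ref{StableSpiral}. By the direct curvature computation carried out in B\"ohm \cite{BohmInhomEinstein}[section 6], distinct values of $\mathcal{C}(\bar{f})$ yield metrics with different sectional curvature invariants and in particular non-isometric metrics; inhomogeneity is obtained simultaneously. The main obstacle is not really in this corollary: all hard analysis — convergence to the first cone solution, stability of spiralling behaviour, and the counting argument of lemma \ref{GeneralCountingArgument} — is packaged into theorem \ref{SymmetricEinsteinMetrics}. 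The only genuinely new verification is the cohomogeneity one set up and the vanishing $A_3 = 0$, together with the smooth closing-up across the maximal volume orbit.
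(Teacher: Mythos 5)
Your proposal is correct and follows the same route as the paper, which treats this corollary as a direct application of theorem \ref{SymmetricEinsteinMetrics} to the cohomogeneity one structure on $S^{d_S+1}\times Q$ with principal orbit the Riemannian product $S^{d_S}\times Q$ (hence $A_3=0$), $A_1=d_S(d_S-1)>0$, $A_2=d_Q\Ric^Q>0$, and non-isometry deferred to the curvature computation in B\"ohm's work. The details you supply (the group action, the vanishing of the O'Neill tensor, the doubling across the maximal volume orbit) are exactly the ones the paper leaves implicit in the discussion of symmetric solutions preceding lemma \ref{GeneralCountingArgument}.
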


\begin{remarkroman}
By considering the linearisation of the Einstein equations along the cone solutions, B\"ohm was also able to construct a symmetric cohomogeneity one Einstein metric on $\mathbb{H}P^{2} \# \overline{\mathbb{H}P}^{2}.$
\end{remarkroman}

\subsubsection{B\"ohm's Einstein metrics on low dimensional spheres}
\label{EinsteinMetricsOnSpheres}

Cohomogeneity one Einstein manifolds with singular orbits of (possibly different) dimensions $d_1,$ $d_2$ can be constructed via solutions $c_{\bar{f}} = (f_1,\dot{f}_1,f_2,\dot{f}_2)$ with $c_{\bar{f}}(0)=(0,1,\bar{f},0)$,  $c_{\bar{f}}(\tau)=(\bar{f}^{'},0,0,-1)$ and $\bar{f}, \bar{f}^{'}, \tau>0$. In the case of $SO(d_1+1) \times SO(d_2+1)$-invariant doubly warped product metrics on spheres, the convergence theory from section \ref{SectionConvergenceToConeSolutions} can be applied to give B\"ohm's \cite{BohmInhomEinstein} inhomogeneous Einstein metrics on $S^5, \ldots, S^9.$ 

For the rest of the section, fix $A_3 = 0$ and normalise the Ricci curvature of the singular orbit to be $\Ric^Q = d_2-1,$ so that $A_i=d_i (d_i-1)$ holds for $i=1,2.$ As before, the trajectory $c_{\bar{f}}$ will always correspond to an Einstein metric on a tubular neighbourhood of a singular orbit of dimension $d_2$ and with a principal orbit of dimension $n=d_1+d_2.$ 

As in section \ref{SectionConvergenceToConeSolutions}, in the $(X_i, Y_i, \mathcal{L})$-coordinates the limit trajectory with $\bar{f} = 0$ corresponds to the unique solution of the Ricci flat system in the unstable manifold of \eqref{InitialCriticalPoint} with $\mathcal{L} \equiv 0$. Recall from proposition \ref{VariablesBoundedRicciFlatTwoSummandsCase} that this trajectory approaches the base point $(\frac{1}{n}, \frac{1}{n c_1})$ of the cone solution asymptotically.  Under the dimensional assumptions $d_1 >1$ and $2 \leq n \leq 8,$ the base point $(\frac{1}{n}, \frac{1}{n c_1})$ is a stable spiral  due to corollary \ref{StableSpiral}. As in the proof of theorem \ref{SymmetricEinsteinMetrics}, it follows from the continuous dependence on the initial value, that also $c_{\bar{f}},$ for $\bar{f}>0$ small enough, exhibits a rotational behaviour as it approaches the base point at $t=0$ of the first cone solution $\gamma.$ Proposition \ref{ConvergenceToConeSolution} then says that given any compact set $K \subset \subset (0, \pi),$ the trajectory $c_{\bar{f}}$ remains $C^{0}$-close to $\gamma$ on $K$ if $\bar{f}>0$ is small enough. 
In fact, $c_{\bar{f}}$ obeys a rotational behaviour in every slice around the cone solution as $\bar{f} \to 0.$ To make this precise, notice that the variables $(\widehat{X}_1,\widehat{Y}_1,\widehat{\mathcal{L}})$ of \eqref{HatVariables} form a local coordinate system along the Einstein trajectories away from the singular orbits. For example, the first cone solution has coordinates $(\cot(t), \frac{1}{c_1 \sin(t)}, n \cot(t)).$ One should think of $\widehat{\mathcal{L}}$ as the time variable. For any fixed value $\widehat{\mathcal{L}}$ and for $\bar{f} >0$ sufficiently small, the trajectory $c_{\bar{f}}$ intersects the $(\widehat{X}_1,\widehat{Y}_1,\widehat{\mathcal{L}})$-plane $P_{\widehat{\mathcal{L}}}$ in a unique point. As $\bar{f}>0$ varies, the intersection points describe a continuous curve in this plane.

\begin{proposition}
Let $A_3 = 0$ and $2 \leq n \leq 8.$ Then in any coordinate slice $P_{\widehat{\mathcal{L}}},$ the intersection points of $c_{\bar{f}}$ with a disc around the first cone solution $\gamma$ in $P_{\widehat{\mathcal{L}}}$ exhibit the same rotational behaviour as $\bar{f} \to 0.$ 
\end{proposition}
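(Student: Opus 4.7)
The plan is to transport the rotational behaviour already established near the initial base point of the cone solution forward along $\gamma$ to an arbitrary slice $P_{\widehat{\mathcal{L}}_0}$ via the flow of the Einstein equations. As recalled in the discussion preceding the proposition, for $\bar{f}>0$ sufficiently small the trajectory $c_{\bar{f}}$ first tracks the Ricci flat trajectory $\gamma_{\text{RF}}$ until it enters an arbitrarily small neighbourhood of the base point of $\gamma,$ and then by proposition \ref{ConvergenceToConeSolution} it remains $C^{0}$-close to $\gamma$ on any compact subinterval of the cone solution. Combined with the stable spiral behaviour coming from corollary \ref{StableSpiral}, this means that for $\widehat{\mathcal{L}}^{\ast}$ sufficiently large (so that the slice $P_{\widehat{\mathcal{L}}^{\ast}}$ is close to $t=0$ along the cone solution) the intersection points $c_{\bar{f}}\cap P_{\widehat{\mathcal{L}}^{\ast}}$ rotate around $\gamma\cap P_{\widehat{\mathcal{L}}^{\ast}}$ as $\bar{f}\to 0,$ which is the base case of the argument.

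The second step is to transport this rotation forward along $\gamma$ by the Einstein flow. On the cone solution the variable $\widehat{\mathcal{L}}=n\cot(t)$ is strictly monotone and the Einstein vector field is transverse to each slice $\{\widehat{\mathcal{L}}=\mathrm{const}\}$ in a neighbourhood of $\gamma,$ since $\tfrac{d}{dt}\widehat{\mathcal{L}}=-n\sin^{-2}(t)\neq 0.$ Hence for any finite values $\widehat{\mathcal{L}}^{\ast}>\widehat{\mathcal{L}}_0$ the implicit function theorem supplies a local diffeomorphism
\[
\Phi_{\widehat{\mathcal{L}}^{\ast},\widehat{\mathcal{L}}_0}\colon U\subset P_{\widehat{\mathcal{L}}^{\ast}}\to P_{\widehat{\mathcal{L}}_0}
\]
defined on a neighbourhood $U$ of $\gamma\cap P_{\widehat{\mathcal{L}}^{\ast}},$ which maps $\gamma\cap P_{\widehat{\mathcal{L}}^{\ast}}$ to $\gamma\cap P_{\widehat{\mathcal{L}}_0}.$ A local diffeomorphism preserves winding number of a curve around an interior fixed point, so the image of the intersection curve $\bar{f}\mapsto c_{\bar{f}}\cap P_{\widehat{\mathcal{L}}^{\ast}}$ under $\Phi_{\widehat{\mathcal{L}}^{\ast},\widehat{\mathcal{L}}_0}$ rotates around $\gamma\cap P_{\widehat{\mathcal{L}}_0}$ with the same behaviour as $\bar{f}\to 0.$ By uniqueness of solutions to the Einstein equations this image is precisely $c_{\bar{f}}\cap P_{\widehat{\mathcal{L}}_0},$ which gives the claim.

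The main technical point requiring care is that, uniformly for $\bar{f}$ small enough, the intersection points $c_{\bar{f}}\cap P_{\widehat{\mathcal{L}}^{\ast}}$ must lie inside the domain $U$ of $\Phi_{\widehat{\mathcal{L}}^{\ast},\widehat{\mathcal{L}}_0}$ and the full trajectory segment between these slices must stay close to $\gamma.$ This is handled by first fixing a tubular neighbourhood $U$ around the compact arc of $\gamma$ running from $P_{\widehat{\mathcal{L}}^{\ast}}$ to $P_{\widehat{\mathcal{L}}_0},$ and then invoking the $C^{0}$-convergence from proposition \ref{ConvergenceToConeSolution} on this compact arc to shrink $\bar{f}$ accordingly. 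Since the rotation number of $\bar{f}\mapsto c_{\bar{f}}\cap P_{\widehat{\mathcal{L}}^{\ast}}$ tends to infinity as $\bar{f}\to 0$ by the base case, and the diffeomorphism $\Phi_{\widehat{\mathcal{L}}^{\ast},\widehat{\mathcal{L}}_0}$ preserves the rotation count for each fixed pair of slices, the same divergent rotational behaviour is inherited in $P_{\widehat{\mathcal{L}}_0}.$
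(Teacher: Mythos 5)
Your transport step --- flowing a fixed slice $P_{\widehat{\mathcal{L}}^{\ast}}$ to $P_{\widehat{\mathcal{L}}_0}$ by the Einstein flow, using transversality of the vector field to the slices and the fact that a diffeomorphism of a disc fixing an interior point preserves winding --- is a sensible reduction, and the uniformity issues you flag are indeed handled by proposition \ref{ConvergenceToConeSolution}. The problem is the base case, which you assert rather than prove, and which is in fact the entire content of the proposition (for one slice). Corollary \ref{StableSpiral} and the $C^{0}$-closeness to $\gamma_{\text{RF}}$ give you that a \emph{single} trajectory spirals around the base point $(\frac{1}{n},\frac{1}{nc_1})$ as its own time parameter $s\to\infty$. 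What the proposition asserts is a different statement: that the \emph{family} of intersection points $\bar{f}\mapsto c_{\bar{f}}\cap P_{\widehat{\mathcal{L}}^{\ast}}$, for a \emph{fixed} slice, winds around $\gamma\cap P_{\widehat{\mathcal{L}}^{\ast}}$ as the \emph{parameter} $\bar{f}\to 0$. Passing from the first to the second is not automatic: the $s$-time $S(\bar{f})$ at which $c_{\bar{f}}$ reaches the slice tends to infinity as $\bar{f}\to 0$, the continuous-dependence estimate degenerates on growing time intervals, and the spiral of $\gamma_{\text{RF}}$ collapses onto its centre, so ``$C^{0}$-close to a spiral'' does not by itself control where the hitting point sits relative to the centre. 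Note also that $\gamma_{\text{RF}}$ lives in the locus $\mathcal{L}\equiv 0$ and never meets any slice $P_{\widehat{\mathcal{L}}^{\ast}}$ with $\widehat{\mathcal{L}}^{\ast}$ finite, so there is no limiting intersection point to compare against.

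The paper closes exactly this gap with two ingredients you do not use: lemma \ref{OccurrenceOfCriticalPoints}, which converts rotation into something countable (critical points of $\omega=f_1/f_2$ occur precisely when $\widehat{X}_1=\widehat{\mathcal{L}}/n$, and maxima and minima lie on opposite sides of $\gamma$ in the $\widehat{Y}_1$-coordinate, so each critical point is one half-turn around the cone solution), and the counting principle of lemma \ref{GeneralCountingArgument} applied to the hitting time $T_{\bar{f}}$ of the disc, which shows that the count $\mathcal{C}(\bar{f},T_{\bar{f}})$ changes by at most one as $\bar{f}$ varies and tends to infinity as $\bar{f}\to 0$. This argument applies directly and uniformly to every slice, so no propagation is needed; conversely, your propagation cannot replace it, since it only moves the difficulty from one slice to another. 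To repair your proof you would have to supply the counting argument (or an equivalent intermediate-value argument in $\bar{f}$) for the base slice, at which point the transport step becomes redundant.
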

This follows from the general counting principle \ref{GeneralCountingArgument} applied to the time $T_{\bar{f}}$ when $c_{\bar{f}}$ intersects the disc, the observation that $\mathcal{C}(c_{\bar{f}},T_{\bar{f}}) \to \infty$ as $\bar{f} \to 0,$ and lemma \ref{OccurrenceOfCriticalPoints} below. 

\begin{lemma} 
Along any trajectory $c_{\bar{f}},$ critical points of $\omega$ occur if and only if $\widehat{X}_1 = \frac{\widehat{\mathcal{L}}}{n}$ and $\omega$ is increasing if $\widehat{X}_1 > \frac{\widehat{\mathcal{L}}}{n}$ and decreasing if $\widehat{X}_1 < \frac{\widehat{\mathcal{L}}}{n}.$ 

Moreover, if $A_3 = 0,$ then the $\widehat{Y}_1$-coordinate of $\omega$ in $P_{\widehat{\mathcal{L}}}$ satisfies $\widehat{Y}_1(\omega) > \widehat{Y}_1(\gamma)$ at any maximum and $\widehat{Y}_1(\omega) < \widehat{Y}_1(\gamma)$ at any minimum, where $\gamma$ is the first cone solution.

\label{OccurrenceOfCriticalPoints}
\end{lemma}
\begin{proof}
The first statement follows from $\dot{\omega} = \omega ( \widehat{X}_1 - \widehat{X}_2 )$ and $\sum_{i=1}^2 d_i \widehat{X}_i = \widehat{\mathcal{L}}.$ If $A_3 = 0,$ the identity $\ddot{\omega} = \frac{A_1}{d_1} \widehat{Y}_1^2 -  \frac{A_2}{d_2} \widehat{Y}_2^2$  holds at every critical point of $\omega.$ However, as $\dot{\omega} = \ddot{\omega} = 0$ only occurs on the cone solution, the claim follows.
\end{proof}

Suppose that $d_{\overline{F}}=(F_1,\dot{F}_1, F_2, \dot{F}_2)$ is also an Einstein trajectory which satisfies $d_{\overline{F}}(0)=(0,1,\overline{F},0)$ but instead induces a metric on a tubular neighbourhood of a singular orbit of dimension $d_2$ and with a principal orbit of dimension $n= d_1+d_2.$ Then the above considerations also apply to $d_{\overline{F}}.$ Clearly, the trajectories depend continuously on the parameters $d_1, d_2>1.$ Hence, in the dimension range $2 \leq n \leq 8,$ the trajectories $c_{\bar{f}}$ and $d_{\overline{F}}$ have the {\em same} rotational behaviour as they approach their respective base point of the cone solution at $t=0.$ 

Now consider the twisted trajectory $d_{\overline{F}}^{\text{twisted}}(t)=(F_2, - \dot{F}_2, F_1, - \dot{F}_1)(t).$ If $\tau >0$ is small enough, then $d_{\overline{F}}^{\text{twisted}}(\tau - t)$ is an actual solution to the Einstein equations due to the symmetries of the equations in $d_1, d_2$ as $A_3=0.$ That is, $d_{\overline{F}}^{\text{twisted}}(t)$ runs through the Einstein equations in `opposite direction', starting at $d_{\overline{F}}^{\text{twisted}}(0) = (\overline{F},0,0,-1)$ and then approaching the same cone solution as $c_{\bar{f}}$ but at the base point corresponding to $t = \pi.$ In particular, it has the {\em opposite} rotational behaviour to $c_{\bar{f}}.$ Due to proposition \ref{ConvergenceToConeSolution}, for $\bar{f}, \overline{F} >0$ small enough, both $c_{\bar{f}}$ and $d_{\overline{F}}^{\text{twisted}}$ intersect the plane $\{(\widehat{X}_1,\widehat{Y}_1, n)\},$ which is the slice of the maximal volume orbit of the cone solution, in a unique point. Since both trajectories in fact wind around the cone solution arbitrarily often as $\bar{f}, \overline{F} \to 0,$ respectively, and $c_{\bar{f}},$ $d_{\overline{F}}^{\text{twisted}}$ have the opposite rotational behaviour, there are infinitely many intersection points in this (or any other) slice. For any such, there exist $t_0, t_1 > 0$ such that the matching condition $c_{\bar{f}}(t_0) = d_{\overline{F}}^{\text{twisted}}(t_1)$ holds. Then 
\begin{align*}
\tilde{c}_{\bar{f},\overline{F}}(t) = \begin{cases} c_{\bar{f}}(t) &  \ \text{ for } \ 0 \leq t \leq t_0 \\
 d_{\overline{F}}^{\text{twisted}}(t_0+t_1-t) & \ \text{ for } \ t_0 \leq t \leq t_0+t_1
\end{cases}
\end{align*}
satisfies $\tilde{c}_{\bar{f},\overline{F}}(0)=(0,1,\bar{f},0)$ and $\tilde{c}_{\bar{f},\overline{F}}(t_0+t_1)=(\overline{F},0,0,-1)$ and it is a {\em smooth} solution to the Einstein equation as required. Smoothness indeed follows from the uniqueness of solutions to ODEs with fixed initial conditions, since $\tilde{c}_{\bar{f},\overline{F}}$ clearly solves the Einstein equations on both intervals. In particular, any such pair $(\bar{f}, \overline{F})$ induces an Einstein metric $g{(\bar{f}, \overline{F})}$ on $S^{n+1}.$ 

\begin{remarkroman}
A direct curvature computation shows that the metrics are indeed inhomogeneous. Moreover, if the metrics $g{(\bar{f}, \overline{F})}$ and $g{(\bar{f}^{'}, \overline{F}{'})}$ on $S^{n+1}$ are isometric, it follows that $\bar{f}=\bar{f}^{'}$ if $d_1 \neq d_2$ and $\bar{f}=\bar{f}^{'}$ or $\bar{f}=\overline{F}{'}$ if $d_1 = d_2,$ since isometries must map orbits onto orbits, see \cite[Section 7]{BohmInhomEinstein}. 
\end{remarkroman}

This recovers B\"ohm's Einstein metrics on low dimensional spheres \cite[Theorem 3.6]{BohmInhomEinstein}:

\begin{corollary}[B\"ohm]
On $S^5$ and $S^6$ there exists one, on $S^7$ and $S^8$ there exist two, and on $S^9$ there exist three infinite families of non-isometric, strictly cohomogeneity one Einstein metrics of positive scalar curvature.
\end{corollary}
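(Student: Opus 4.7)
The plan is to reduce the corollary to a counting problem over all admissible cohomogeneity one decompositions of the sphere and then apply the matched-trajectory construction already developed in Section \ref{EinsteinMetricsOnSpheres}. Every sphere $S^{n+1}$ carries the isometric action of $SO(d_1+1) \times SO(d_2+1)$ with $n = d_1+d_2$, whose generic orbit is $S^{d_1} \times S^{d_2}$ and whose two singular orbits are $S^{d_1}$ (of dimension $d_1$) and $S^{d_2}$ (of dimension $d_2$). The resulting doubly warped product metric \eqref{TwoSummandsMetric} has $A_3 = 0$ (since the Riemannian submersion is trivial), and with the normalisation $\Ric^Q = d_2-1$ one has $A_i = d_i(d_i-1)$ for $i = 1,2$. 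To apply the preceding analysis (in particular proposition \ref{ConvergenceToConeSolution}, lemma \ref{OccurrenceOfCriticalPoints}, and corollary \ref{StableSpiral}) symmetrically at both singular orbits, one requires $d_1, d_2 > 1$ as well as $n = d_1+d_2 \leq 8$.

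With these restrictions, I would first enumerate the admissible unordered pairs $(d_1,d_2)$ for each sphere:
\begin{align*}
S^5 \ (n=4) &: \ (2,2), \\
S^6 \ (n=5) &: \ (2,3), \\
S^7 \ (n=6) &: \ (2,4), \ (3,3), \\
S^8 \ (n=7) &: \ (2,5), \ (3,4), \\
S^9 \ (n=8) &: \ (2,6), \ (3,5), \ (4,4),
\end{align*}
yielding exactly $1,1,2,2,3$ families respectively. For each admissible pair I would then invoke the construction of Section \ref{EinsteinMetricsOnSpheres} verbatim: take the trajectory $c_{\bar{f}}$ emanating from a singular orbit of dimension $d_1$ and the twisted trajectory $d_{\overline{F}}^{\text{twisted}}$ (obtained by swapping the roles of $d_1,d_2$ and running in reverse time), and observe that both approach the same first cone solution $\gamma$ from opposite ends. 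By corollary \ref{StableSpiral}, since $2 \leq n \leq 8$ and $A_3 = 0$, the base points of $\gamma$ are stable spirals in the $(X_1,Y_1)$-plane. Via proposition \ref{ConvergenceToConeSolution} and continuous dependence on initial data, $c_{\bar{f}}$ and $d_{\overline{F}}^{\text{twisted}}$ both wind around $\gamma$ arbitrarily many times in every slice $P_{\widehat{\mathcal{L}}}$ as $\bar{f},\overline{F}\to 0$, with opposite rotational senses. The general counting principle (lemma \ref{GeneralCountingArgument}), applied to the slice $\{\widehat{\mathcal{L}} = 0\}$ corresponding to the maximal volume orbit, then yields infinitely many matched pairs $(\bar{f},\overline{F})$, each producing a smooth Einstein metric $g(\bar{f},\overline{F})$ on $S^{n+1}$ by concatenation at the matching time.

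It remains to show that the resulting metrics are non-isometric, both within a fixed family and between different families. Within a fixed admissible pair $(d_1,d_2)$, this was already addressed in the remark preceding the corollary: isometries must preserve the orbit structure, so distinct values of $\bar{f}$ (modulo the symmetry $\bar{f} \leftrightarrow \overline{F}$ when $d_1 = d_2$) produce non-isometric metrics, as verified by direct curvature computation following \cite{BohmInhomEinstein}. Between different families, the principal orbits $S^{d_1} \times S^{d_2}$ are distinguished (up to swap) by the unordered pair $\{d_1,d_2\}$; since the action is of cohomogeneity one and the generic orbit is determined intrinsically by the metric, metrics associated to different pairs cannot be isometric.

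The main obstacle is not any single step in isolation but rather verifying that the continuous-dependence and rotational arguments of Section \ref{EinsteinMetricsOnSpheres} apply simultaneously to both trajectories $c_{\bar{f}}$ and $d_{\overline{F}}^{\text{twisted}}$ in every admissible slice $P_{\widehat{\mathcal{L}}}$, and in particular that the intersection count in the maximal-volume slice genuinely tends to infinity as $\bar{f},\overline{F} \to 0$ jointly. This requires the quantitative convergence to the cone solution on compact subsets of $(0,\pi)$ supplied by proposition \ref{ConvergenceToConeSolution} together with the non-degeneracy of critical points of $\omega$ guaranteed by the lemma preceding lemma \ref{GeneralCountingArgument}; once these are in hand, the counting argument closes the proof mechanically.
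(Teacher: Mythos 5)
Your proposal is correct and follows essentially the same route as the paper: you enumerate the admissible pairs $(d_1,d_2)$ with $d_1,d_2\geq 2$ and $d_1+d_2\leq 8$ for each sphere (which is exactly the implicit count behind the stated numbers of families) and then apply the matched-trajectory construction of section \ref{EinsteinMetricsOnSpheres} verbatim, concatenating $c_{\bar{f}}$ with the twisted trajectory $d_{\overline{F}}^{\text{twisted}}$ at intersection points in the maximal-volume slice, whose infinitude follows from the opposite rotational behaviour guaranteed by corollary \ref{StableSpiral} and proposition \ref{ConvergenceToConeSolution}. The non-isometry discussion likewise matches the paper's appeal to orbit-preservation under isometries and the curvature computations of \cite{BohmInhomEinstein}.
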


\section{Quasi-Einstein Metrics}
\label{SectionQuasiEinsteinMetrics}

\subsection{Introduction}
\label{QEMIntroSection}

In the study of smooth metric measure spaces the $m$-Bakry-\'Emery Ricci tensor $\Ric + \Hess u - \frac{1}{m} du \otimes du$ plays a central role, cf. \cite{CaseSMMSAndQEM}. It also naturally appears in the context of warped product Einstein manifolds, where it has led to the notion of $m$-{\em quasi-Einstein metrics} or $(\lambda, n+m)$-Einstein metrics in the terminology of He-Petersen-Wylie, cf. \cite{HPWUniquenessWarpedProductEinstein}:

\begin{definition}
Let $(M,g)$ be an $n$-dimensional Riemannian manifold, $u \in C^{\infty}(M)$ and $m \in (0,\infty].$ Then $(M,g,e^{-u} d \Vol_M)$ is called {\em $m$-quasi-Einstein manifold} if 
\begin{equation}
\Ric + \Hess u - \frac{1}{m} du \otimes du + \frac{\varepsilon}{2} g = 0.
\label{QEMequation}
\end{equation}
The sum $m+n$ is called {\em effective dimension} and $-\frac{\varepsilon}{2}$ is the {\em quasi-Einstein constant.}
\end{definition}

Kim-Kim \cite{KimKim} observed that any connected $m$-quasi-Einstein manifold with $m < \infty$ satisfies the following conservation law:  There exists a constant $\mu \in \R,$ called {\em characteristic constant,} such that 
\begin{equation}
\Delta u - | \nabla u |^2 + m \mu e^{2u / m} + m \frac{\varepsilon}{2} = 0.
\label{QEMConsLaw}
\end{equation}

In this case, Kim-Kim \cite{KimKim} proved that if $m>1$ is an integer and $(N^m,h)$ is Einstein with $\Ric_h = \mu h$, then the warped product 
\begin{equation}
(M \times N, g + e^{-2u /m} h)
\label{EQWarpedProductKimKim}
\end{equation}
is Einstein. Conversely, if $(M \times N, g + e^{-2u /m} h)$ is Einstein, then $(M,g,e^{-u} d \Vol_M)$ must be $m$-quasi-Einstein.

\vspace{2mm}

This point of view on Einstein warped products was successfully used by Case-Shu-Wei \cite{CSWRigidityQEM} to show that any compact {\em K\"ahler} $m$-quasi-Einstein metric with $m < \infty$ is Einstein. In contrast, recall that all {\em known} non-trivial compact Ricci solitons are K\"ahler. 

Hall \cite{HallQEinstein} constructed $m$-quasi-Einstein metrics on total spaces of complex vector bundles associated to principal circle bundles over products of Fano K\"ahler-Einstein manifolds. Due to the induced hypersurface foliation their geometry can in fact be described using the cohomogeneity one equations from section \ref{CohomOneQEMsection}. The case of a single base factor is due to L\"u-Page-Pope \cite{LuPagePopeQEinstein}. Remarkably, the L\"u-Page-Pope metrics are conformally K\"ahler and the associated K\"ahler class is a multiple of the first Chern class as shown by Batat-Hall-Jizany-Murphy \cite{BHJMConfKahlerQEM}.

\subsection{The initial value problem for cohomogeneity one quasi-Einstein metrics} 
\label{CohomOneQEMsection}

The formulae for the Ricci curvature of a cohomogeneity one manifold in section \ref{SectionCohomOneSetUp} yield that the $m$-quasi-Einstein equation takes the form
\begin{align}
-( \delta^{\nabla^t}L_t)^{\flat} - d(\tr(L_t)) & = 0, \label{QEMequationA} \\
- \tr( \dot{L}_t) - \tr(L_t^2) + \ddot{u} - \frac{1}{m} \dot{u}^2 + \frac{\varepsilon}{2} & =0, \label{QEMequationB} \\
- \dot{L}_t  - (- \dot{u} + \tr(L_t)) L_t  + r_t + \frac{\varepsilon}{2} \mathbb{I} & = 0, \label{QEMequationC} 
\end{align}
and the conservation law \eqref{QEMConsLaw} is given by
\begin{equation}
\ddot{u} + (- \dot{u} + \tr(L)) \dot{u} + m \mu e^{2u/m} + m \frac{\varepsilon}{2} = 0.
\label{CohomOneQEMConsLaw}
\end{equation}

\begin{remarkroman}
Notice that for $f=e^{-u/m}$ the conservation law is equivalent to
\begin{align*}
\frac{d}{dt} \frac{\dot{f}}{f} = - \left( m \frac{\dot{f}}{f} + \tr(L) \right) \frac{\dot{f}}{f} + \frac{\mu}{f^2} + \frac{\varepsilon}{2},
\end{align*}
and hence it is the Einstein equation for the added factor in Kim-Kim's \cite{KimKim} warped product construction \eqref{EQWarpedProductKimKim}.
\label{ConsLawIsEinsteinEQ}
\end{remarkroman}

The following proposition generalises an observation due to Back\cite{BackLocalTheoryofEquiv} in the Einstein case, see also \cite{EWInitialValueEinstein} and  \cite{DWCohomOneSolitons}. 

\begin{proposition}
Let $M$ be a connected manifold and $g$ a $C^2$-Riemannian metric on $M.$ Suppose that $G$ is a compact Lie group which acts isometrically and with cohomogeneity one on $(M,g)$ and that the action has a singular orbit. Let $u \in C^3(M)$ be $G$-invariant. 

Then \eqref{QEMequationC} implies \eqref{QEMequationA} and if the conservation law \eqref{CohomOneQEMConsLaw} is satisfied, then \eqref{QEMequationB} holds as well.
\label{ReducedQEMsystem}
\end{proposition}
\begin{proof}
The fact that \eqref{QEMequationC} implies \eqref{QEMequationA} follows as in the Ricci soliton case, cf.  \cite[Proposition 3.19]{DWCohomOneSolitons}, as the equations are identical. 

Let $v_t$ be the relative volume of the principal orbit $(P,g_t).$ Then it follows that $\frac{d}{dt} v = \tr(L) v$ and due to \cite[Formula (3.16)]{DWCohomOneSolitons} there holds
\begin{equation*}
 \frac{d}{dt} \left( v^2 \left( \Ric(N,N) + \frac{\varepsilon}{2} \right) \right) + v^2 \left( 2 \dot{u} \tr(L^2)+ \frac{d}{dt} \left( \dot{u}\tr(L) \right) \right) = 0.
\end{equation*}
By combining this with $\Ric(N,N)= - \tr(\dot{L}) - \tr(L^2)$ and the conservation law \eqref{CohomOneQEMConsLaw}, one obtains
\begin{equation*}
\frac{d}{dt} \left( v^2 \left( \Ric(N,N) + \ddot{u} - \frac{1}{m} \dot{u}^2 + \frac{\varepsilon}{2} \right) \right)
        = 2 \dot{u} v^2 \left( \Ric(N,N) + \ddot{u} - \frac{1}{m} \dot{u}^2 + \frac{\varepsilon}{2} \right).
\end{equation*}
Therefore $v^2 \left( \Ric(N,N) + \ddot{u} - \frac{1}{m} \dot{u}^2 + \frac{\varepsilon}{2} \right)$ is a multiple of $e^{2 u}$ which vanishes at the singular orbit, and thus vanishes identically.
\end{proof} 

\begin{proposition}
Let $M$ be a smooth manifold of dimension $\dim M \geq 3.$ Suppose that a solution of the $m$-quasi-Einstein equation on $M$ is given by a $C^2$-Riemannian metric $g$ and $u \in C^3(M).$ Then $g$ and $u$ are real analytic in harmonic and geodesic normal coordinates. 
\label{QEMregularity}
\end{proposition}
\begin{proof}
The $m$-quasi-Einstein equation and the contracted second Bianchi identity give rise to the PDE
\begin{align*}
\Ric + \Hess u - \frac{1}{m} du \otimes du + \frac{\varepsilon}{2} g & = 0,  \\
\Delta(du) + \Ric( \cdot, \grad u) - \frac{2}{m} \left( \Delta u \right) du & = 0
\end{align*}
for $(g,u).$ Notice that the $\frac{1}{m}$-terms are of lower order and thus the principal symbol is the same as in the Ricci soliton case. Hence, $(g,u)$ is a solution of a quasi-linear elliptic system and the regularity analysis in\cite[Lemma 3.2]{DWCohomOneSolitons} carries over without any changes, see also \cite[Theorem 5.2]{dTKRegularity}.
\end{proof}

The initial value problem for $m$-quasi-Einstein metrics at a singular orbit can be solved analoguously to Buzano's  \cite{BuzanoInitialValueSolitons} approach in the Ricci soliton case: Due to proposition \ref{ReducedQEMsystem} it suffices to consider \eqref{QEMequationC}, \eqref{CohomOneQEMConsLaw} and the relation $\dot{g}_t = 2 g_t L_t.$ Setting up an ODE system for $(g_t, L_t, u)$ as in \cite{BuzanoInitialValueSolitons}, one observes that the $-\frac{1}{m} \dot{u}^2$-term simply disappears in the error terms that occur in Buzano's proof because it is of lower order. In particular, the construction of a formal power series solution is unchanged. Due to the real analyticity of $m$-quasi-Einstein metrics as in proposition \ref{QEMregularity}, a theorem of Malgrange \cite[Theor{\`e}me 7.1]{MalgrangeEquationsDifferentielle} then yields a genuine solution. Alternatively, a Picard iteration may be applied as in \cite{EWInitialValueEinstein}.

\begin{theorem}
Let $G$ be a compact Lie group acting isometrically on a connected Riemannian manifold $(M,g)$ and suppose there exists a singular orbit $Q = G / H.$ Choose $q \in M$ such that $Q = G \cdot q$ and denote by $V = T_qM / T_qQ$ the normal space of $Q$ at q. Then $H$ acts linearly and orthogonally on $V$ and a tubular neighbourhood of $Q$ may be identified with its normal bundle $E = G \times_H V.$ The principal orbits are $P = G / K = G \cdot v$ for any $v \in V \setminus \left\lbrace 0\right\rbrace.$ These can be identified with the sphere bundle of $E$ (with respect to an $H$-invariant scalar product on $V$). Let $\mathfrak{g} = \mathfrak{h} \oplus \mathfrak{p}_{-}$ be a decomposition of the Lie algebra of $G$ where $\mathfrak{p}_{-}$ is an $Ad_H$-invariant complement of $\mathfrak{h}= \operatorname{Lie}(H).$

Assume that $V$ and $\mathfrak{p}_{-}$ have no common irreducible factors as $K$-representations.

Then for any $\varepsilon \in \R,$ any $m \in (0, \infty],$ any $G$-invariant metric $g_Q$ on $Q$ and any shape operator $L \colon E \to \operatorname{Sym}^2(T^{*}Q)$ there exists a G-invariant $m$-quasi-Einstein metric on some open disc bundle of $E$.
\label{QEMInitialValueTheorem}
\end{theorem}

\begin{remarkroman}
The assumption that $V$ and $\mathfrak{p}_{-}$ have no common irreducible factors as $K$-representations is primarily a technical simplification but as Eschenburg-Wang point out in \cite[Remark 2.7]{EWInitialValueEinstein} it is also natural in the context of the Kaluza-Klein construction.
\end{remarkroman}

\subsection{New quasi-Einstein metrics}
The analysis of the two summands case in section \ref{CompletenessTwoSummands} can be adapted to the $m$-quasi-Einstein case for $m < \infty.$ Recall that the metric restricted to the principal orbit is given by $g_t = f_1(t)^2 g_S + f_2(t)^2 g_Q,$ and set $f_3(t) = e^{-u(t)/m}.$ Due to proposition \ref{ReducedQEMsystem}, it suffices to consider \eqref{QEMequationC} and \eqref{CohomOneQEMConsLaw}, and thus the two summands $m$-quasi-Einstein equations take the form
\begin{align*}
\frac{d}{dt} \left( \frac{\dot{f}_1}{f_1} \right) & = - \tr ( \widehat{L} ) \frac{\dot{f}_1}{f_1} + \frac{\varepsilon}{2} + \frac{A_1}{d_1} \frac{1}{f_1^2} + \frac{A_3}{d_1} \frac{f_1^2}{f_2^4}, \\
\frac{d}{dt} \left( \frac{\dot{f}_2}{f_2} \right) & = - \tr ( \widehat{L} ) \frac{\dot{f}_2}{f_2} + \frac{\varepsilon}{2} + \frac{A_2}{d_2} \frac{1}{f_2^2} - 2 \frac{A_3}{d_2} \frac{f_1^2}{f_2^4}, \\
\frac{d}{dt} \left( \frac{\dot{f}_3}{f_3} \right) & = - \tr ( \widehat{L} ) \frac{\dot{f}_3}{f_3} + \frac{\varepsilon}{2} + \frac{\mu}{f_3^2},
\end{align*}
where $\widehat{L} = \diag \left( \frac{\dot{f}_1}{f_1} \mathbb{I}_{d_1}, \frac{\dot{f}_2}{f_2} \mathbb{I}_{d_2}, \frac{\dot{f}_3}{f_3} \mathbb{I}_{m} \right)$ corresponds to the shape operator in Kim-Kim's \cite{KimKim} warped product construction. Notice that $\widehat{L}$ is only well-defined if $m \in \N,$ but its trace always is.

Due to the regularity theorem \ref{QEMInitialValueTheorem} the metric can be smoothly extended over the singular orbit if the initial conditions 
\begin{align*}
f_1(0)=0, \ \dot{f}_1(0)=1 \ \text{ and } \ f_2(0)= \bar{f} > 0, \ \dot{f}_2(0)=0
\end{align*}
are imposed. Clearly one may fix $u(0)=0$ and then 
\begin{align*}
f_3(0) = 1 \ \text{ and } \ \dot{f}_3(0) = 0 \ \text{ and } \ \ddot{f}_3(0) = \varepsilon + 2 \mu
\end{align*}
are the corresponding smoothness conditions for $f_3$.

Fix $\varepsilon \geq 0$ and $\mu > 0$. It follows that $\dot{f}_i(t) > 0$ for $i=1, 2, 3$ and sufficiently small $t>0.$ In analogy to \eqref{RescaledTwoSummandsVariables}, set 
\begin{align*}
\mathcal{L} = \frac{1}{\tr( \widehat{L} )}, \ \frac{d}{ds} = \mathcal{L} \cdot \frac{d}{dt} \ \text{and} \ X_i = \mathcal{L} \cdot \frac{\dot{f}_i}{f_i}, \ Y_i = \mathcal{L} \cdot \frac{1}{f_i} \ \text{for} \ i=1,2,3.
\end{align*}

In particular, $\mathcal{L}, X_i, Y_i$ are positive initially. Set $d_3 =m.$ Then $\sum_{i=1}^3 d_i X_i =1$ and the rescaled two summands $m$-quasi-Einstein equations take the form
\begin{align*}
X_1^{'} & = X_1 \left(  \sum_{i=1}^3 d_i X_i^2 - \frac{\varepsilon}{2} \mathcal{L}^2 -1 \right)  + \frac{A_1}{d_1}  Y_1^2+\frac{\varepsilon}{2} \mathcal{L}^2 + \frac{A_3}{d_1} \frac{Y_2^4}{Y_1^2}, \\
X_2^{'} & = X_2 \left(  \sum_{i=1}^3 d_i X_i^2 - \frac{\varepsilon}{2} \mathcal{L}^2 -1 \right)  + \frac{A_2}{d_2}  Y_2^2+\frac{\varepsilon}{2} \mathcal{L}^2 -  \frac{2 A_3}{d_2} \frac{Y_2^4}{Y_1^2}, \\
X_3^{'} & = X_3 \left(  \sum_{i=1}^3 d_i X_i^2 - \frac{\varepsilon}{2} \mathcal{L}^2 -1 \right)  + \mu Y_3^2+\frac{\varepsilon}{2} \mathcal{L}^2, 
\end{align*}
\begin{align*}
Y_j^{'} & =Y_j  \left(  \sum_{i=1}^3 d_i X_i^2 - \frac{\varepsilon}{2} \mathcal{L}^2 - X_j \right) \ \text {for} \ j=1,2,3, \\
\mathcal{L}^{'} & =\mathcal{L} \left(  \sum_{i=1}^3 d_i X_i^2 - \frac{\varepsilon}{2} \mathcal{L}^2 \right).
\end{align*}
It follows that $\mathcal{L},$ $X_i$, $Y_i >0$ holds along the flow, except possibly for $X_2.$ In the situations of proposition \ref{X2VariablePositive} and proposition \ref{X2PositiveCircleBundles}, the respective proofs carry over to show that both $X_2 > 0$ and $\frac{Y_2}{Y_1} < \hat{\omega}_1$ are preserved. Since $\hat{\omega}_1^2 < \frac{A_2}{2A_3},$ the conservation law
\begin{align*} 
\sum_{i=1}^3 d_i X_i^2 + A_1 Y_1^2 + A_2 Y_2^2 + m \mu Y_3^2  - A_3 \frac{Y_2^4}{Y_1^2} + (n-1) \frac{\varepsilon}{2} \mathcal{L}^2 = 1
\end{align*}
implies that $X_i$, $Y_i$ are bounded for $i=1,2,3.$ Thus $\mathcal{L}$ cannot blow up in finite time either. Completeness of the metric now follows as in proposition \ref{CompletenessEpsZeroTwoSummands} and corollary \ref{CompletenessEpsPosTwoSummands}. 

If $\varepsilon > 0$ and $\mu = 0,$ then $\mathcal{L},$ $X_i,$ $Y_i$ are bounded due to the conservation law, except possibly $Y_3$. However, the ODE for $Y_3$ implies that $Y_3$ cannot blow up in finite time and a similar argument applies. This shows: 

\begin{theorem}
Let $d_1 \geq 1,$ $A_1=d_1(d_1-1)$ and $(d_1+1)A_2^2 > 4 d_1 d_2 (2d_1+d_2) A_3>0$ and fix $m>0.$

Then the associated two summands ODE gives rise to a $1$-parameter family of complete, non-trivial non-homothetic $m$-Bakry-\'Emery Ricci flat metrics and a $2$-parameter family of non-trivial, complete, non-homothetic $m$-quasi Einstein metrics with quasi-Einstein constant $- \frac{\varepsilon}{2} < 0,$ all of which have positive characteristic constant. 

Furthermore, there exists a $1$-parameter family of complete, non-trivial non-homothetic $m$-quasi-Einstein metrics with quasi-Einstein constant $- \frac{\varepsilon}{2} < 0$ and vanishing characteristic constant.
\label{TwoSummandsQEM}
\end{theorem}

\begin{remarkroman}
Case \cite{CaseNonExistenceQEM} has shown that any complete, non-trivial $m$-Bakry-\'Emery Ricci flat quasi-Einstein manifold has positive characteristic constant.
\end{remarkroman}

Notice that if $A_3 = 0$ and $m \in \N,$ the above construction gives rise to a triply warped product Einstein metric. 

Multiple warped product Einstein metrics of nonpositive scalar curvature were constructed by B\"ohm \cite{BohmNonCompactEinstein} on $\R^{d_1+1} \times M_2 \times \ldots \times M_r$ if $d_1>1,$ for Einstein manifolds $(M_i,g_i)$ of positive scalar curvature $\mu_i >0.$ The corresponding steady and expanding Ricci solitons have been constructed by Dancer-Wang \cite{DWExpandingSolitons, DWSteadySolitons} who in joint work with Buzano and Gallaugher \cite{BDGWExpandingSolitons, BDWSteadySolitons} also settled the case $d_1 =1.$ 

Away from the singular orbit, on $(0, \infty) \times S^{d_1-1} \times M_2 \times \ldots \times M_r,$
the metrics are of the form $dt^2 + \sum_{i=1}^{r} f_i^2(t) g_i.$ Notice that the corresponding $m$-quasi-Einstein equations are 
\begin{align*}
\frac{d}{dt} \frac{\dot{f}_i}{f_i} & = - (- \dot{u} + \tr(L) ) \frac{\dot{f}_i}{f_i} + \frac{\varepsilon}{2} + \frac{\mu_i}{f_i^2} \ \text{ for } \ i=1, \ldots, r,
\end{align*}
where $\mu_1 = d_1-1.$ If $m < \infty,$ then $f_{r+1} = e^{-u/m}$ satisfies an analogous equation, where $\mu_{r+1} > 0$ will be the characteristic constant of the induced $m$-quasi-Einstein metric. 

Due to the regularity theorem \ref{QEMregularity} one induces a smooth $m$-quasi-Einstein metric on the trivial $\R^{d_1+1}$-bundle over $M_2 \times \ldots \times M_r$ by imposing the initial conditions $f_1=0,$ $\dot{f}_1=1$ and $f_i>0$ for $i \geq 2$ at $t=0$ and by requiring that $f_i(t)$ for $i \geq 2$ and $u(t)$ are even. 

Set $d_{r+1} = m$ if $m<\infty$ and $d_{r+1}= \mu_{r+1}=0$ if $m=\infty.$ In terms of the rescaled coordinates $\mathcal{L},$ $X_i,$ $Y_i$ of \eqref{RescaledTwoSummandsVariables} the above initial conditions correspond to the stationary point 
\begin{equation*}
X_1 = \frac{1}{d_1}, Y_1=\frac{1}{d_1} \ \text{and} \ X_i=Y_i=\mathcal{L} = 0 \ \text{for} \ i \geq 2
\end{equation*}
of the Ricci soliton ODE
\begin{align*}
\mathcal{L}^{'} & = \mathcal{L} \left(  \sum_{j=1}^{r+1} d_j X_j^2 - \frac{\varepsilon}{2} \mathcal{L}^2 \right), \\
X_i^{'} &= X_i \left( \sum_{j=1}^{r+1} d_j X_j^2 - \frac{\varepsilon}{2} \mathcal{L}^2 - 1 \right)  + \frac{\varepsilon}{2} \mathcal{L}^2 + \mu_i Y_i^2, \\
Y_i^{'} &= Y_i \left( \sum_{j=1}^{r+1} d_j X_j^2 - \frac{\varepsilon}{2} \mathcal{L}^2 - X_i \right).
\end{align*}

Notice that $f_i(t)>0,$ $\dot{f}_i(t) >0$ for $t >0$ small and thus the rescaled coordinates $\mathcal{L},$ $X_i,$ $Y_i$ are also positive initially. Moreover, for $\varepsilon \geq 0$ positivity is preserved by the flow of the Ricci soliton ODE.

Consider
\begin{align*}
\mathcal{S}_{1, m} = \sum_{i=1}^{r+1} d_i X_i^2 + \sum_{i=1}^{r+1} \mu_i Y_i^2 + (n-1)\frac{\varepsilon}{2} \mathcal{L}^2 -1 \ \text{ and } \
\mathcal{S}_{2,m} = \sum_{i=1}^{r+1} d_i X_i -1.
\end{align*}
In analogy to \eqref{EinsteinLocus}, \eqref{SolitonLocus} it follows that trajectories lying in the preserved locus $\left\{ \mathcal{S}_{1,m} = 0\right\} \cap \left\{ \mathcal{S}_{2,m} = 0\right\}$ correspond to non-trivial $m$-quasi-Einstein metrics for $m < \infty.$  Similarly, non-trivial Ricci soliton metrics correspond to trajectories in $\left\{ \mathcal{S}_{1,\infty} < 0\right\} \cap \left\{ \mathcal{S}_{2,\infty} < 0\right\}$ and Einstein metrics to trajectories in $\left\{ \mathcal{S}_{1,\infty} = 0\right\} \cap \left\{ \mathcal{S}_{2,\infty} = 0\right\}$.

In all cases, if $\varepsilon \geq 0,$ the variables $X_i, Y_i \geq 0$ are bounded, except possibly $Y_1$ if $d_1=1$. However, the ODEs for $\mathcal{L},$ $Y_1$ show as before that $\mathcal{L},$ $Y_1$ cannot blow up in finite time. Completeness of the metric again follows as in proposition \ref{CompletenessEpsZeroTwoSummands} and corollary \ref{CompletenessEpsPosTwoSummands}.

Thus this construction yields $m$-quasi-Einstein metrics on multiple warped products as in Theorem \ref{MainTheoremQEM}, and a unified proof of the works of B\"ohm \cite{BohmNonCompactEinstein} and Buzano-Dancer-Gallaugher-Wang \cite{DWSteadySolitons, DWExpandingSolitons, BDGWExpandingSolitons, BDWSteadySolitons}.

\end{document}